\numberwithin{equation}{section}
\numberwithin{figure}{section}
\renewcommand{\subsection}{\hspace{-\parindent}\refstepcounter{subsection}{\bf \arabic{section}\alph{subsection}. }}
\theoremstyle{plain}
\newtheorem{thm}{Theorem}[section]
\newtheorem{theorem}[thm]{Theorem}
\newtheorem{corollary}[thm]{Corollary}
\newtheorem{assumption}[thm]{Assumption}
\newtheorem{definition}[thm]{Definition}
\newtheorem{remark}[thm]{Remark}
\newtheorem{non-example}[thm]{Non-example}
\newtheorem{properties}[thm]{Properties}
\newtheorem{lemma}[thm]{Lemma}
\newtheorem*{claim*}{Claim} 
\newtheorem*{lemma*}{Lemma}
\newtheorem*{theorem*}{Theorem}
\newtheorem*{conjecture*}{Conjecture}
\newcommand{\bC}{{\mathbb C}}
\newcommand{\bR}{{\mathbb R}}
\newcommand{\bZ}{{\mathbb Z}}
\title[$S^1$-localisation by pseudocycles, and symplectic cohomology]{$S^1$-localisation by pseudocycles, lifts to $S^1$-localisation of moduli spaces, and application to invariants of $S^1$-equivariant symplectic cohomology}
\author{Nicholas Wilkins}
\date{\today}
\begin{document}

\maketitle

\begin{abstract}
    We demonstrate a way to apply $S^1$-localisation to moduli spaces of holomorphic curves. We first prove a reinterpretation of Atyiah-Bott $S^1$-localisation, called {\it localisation by pseudocycles} (LbP), for a smooth semifree $S^1$-action on a manifold. We demonstrate that, for certain moduli spaces of holomorphic curves parametrised by some stratum of the homotopy quotient of a manifold, we may ``lift" the LbP procedure from the parameter space to the moduli space. As an application we deduce relations between equivariant symplectic classes and Gromov-Witten invariants, thus proving a conjecture of Seidel.
\end{abstract}

\section*{Acknowledgements}
I would like to thank Paul Seidel, Shaoyun Bai, Dan Pomerleano, Mark McLean, Aleksey Zinger for helpful conversations. I would like to thank Alex Pieloch for comments on the previous version of this paper.

This research was funded by the Simons Foundation, through award 652299.

\section{Introduction}
\subsection{Background}

When studying a topological space $A$ with an $S^1$-action, the first way one may try to understand its structure is by calculating the $S^1$-equivariant cohomology, i.e. $H^*_{S^1}(A) := H^*(ES^1 \times_{S^1} A)$. We call $ES^1 \times_{S^1} A$ the {\it homotopy quotient of $A$}, and $ES^1$ in this instance is taken as the embedded $S^{\infty} \subset \bC^{\infty}$. This cohomology is generally quite difficult to compute, however, even if $H^*(A)$ is known.

In the case where $A$ is a smooth manifold, there is a strong technique due to Atiyah-Bott \cite{atiyahbott} (recalled in Theorem \ref{theorem:s1-localisation}), often named Atiyah-Bott $S^1$-localisation, which broadly states the following: ``$H^*_{S^1}(A)$ is mostly recovered from $H^*(F)$", where $F \subset A$ is the fixed-point set. We use the term ``mostly" because one also needs to understand the equivariant embedding of $F$ in $A$, but the arising correction is somehow ``small" in the equivariant cohomology. In particular, it is torsion. 

While this is a great technique to understand the equivariant topology of a manifold $A$, if one is working with (a $\bZ_{\ge 0}-$graded collection of) moduli spaces that one counts to define invariants of $S^1$-equivariant symplectic cohomology (throughout, we will colloquially refer to these as: {\it $S^1$-equivariant moduli spaces}), then because of the need to ensure regularity, one very rarely encounters actual homotopy quotients. In particular, the moduli spaces one constructs do fibre over substrata of $BS^1$, but are not themselves substrata of a homotopy quotient. Nonetheless, we would often like to apply a form of $S^1$-localisation to such moduli spaces.

What we present in this paper is a reinterpretation of Atiyah-Bott $S^1$-localisation, described in Section \ref{sec:LbP}. This reinterpretation has the following useful property: if some $S^1$-equivariant moduli space is parametrised by the nested substrata of a homotopy quotient, along with certain conditions to ensure ``niceness", then one can apply the localisation by pseudocycles (throughout denoted LbP) procedure to the homotopy quotient substrata, and lift this to the moduli space. This procedure is detailed in Section \ref{sec:lifting-to-moduli-spaces}. We then use this to prove a conjecture due to Seidel, i.e. \cite[Conjecture 3.3]{lefschetz3}, in Section \ref{sec:ourexample}. This theorem yields a relationship between Gromov-Witten invariants and equivariant Borman-Sheridan classes. The nomenclature due to unpublished work of the named authors, but these and similar classes were studied in \cite{ganatra-pomerleano}, \cite{borman-sheridan-varolgunes}, \cite{lefschetz3}, \cite{tonkonog}, and \cite{ganatra2021log}.

\subsection{Main results}

Throughout, $A$ will denote a smooth compact manifold equipped with a semi-free $S^1$-action. Its fixed-point set is denoted $F$. Associated to any closed $S^1$-invariant submanifold $N \subset A$, and any $i \ge 0$, there is a pseudocycle $\mu^N_i$ of $S^{2i+1}\times_{S^1} A$ denoting an element  $[\mu^N_i] \in H_{2i + \dim (N)}^{S^1}(A)$ as in Equation \eqref{equation:proj-inc}.

In Section \ref{sec:LbP}, we demonstrate in detail the following relationships between such cycles, i.e. Equation \eqref{equation:lbp1} (or \eqref{equation:lbp2} if $A$ has a boundary):

\begin{theorem*}[\ref{theorem:lbp}]
If $\partial A = \emptyset$ then:
$$ \sum_{k \ : \ \lambda_k = 2} \epsilon(F_k) [\mu_i^F] + \sum_k n_k [\mu_i^{BD(p^{-1}B_k)^o}] - [\mu_{i-1}^A] = 0,$$ or if $\partial A \neq \emptyset$ and Assumption \ref{assumption:mfdwbdry-assumption} holds:  $$ \sum_{k \ : \ \lambda_k = 2}  \epsilon(F_k) [\mu_i^{F_k}] + \sum_k n_k [\mu_i^{BD(p^{-1}B_k)^{wc}} \# \mu_{i+1}^{\partial B_k}] - [\mu^A_{i-1} \#_{\mu^N_{i-1}} \mu^{N/S^1}_{i}] = 0,$$ where the $\epsilon(F_k) \in \{ -1,1\}$ and $n_k \in \bZ$. 

\end{theorem*}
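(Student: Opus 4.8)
The plan is to realise $[\mu^A_{i-1}]$ as a cap product with the equivariant Euler class of the weight-one line bundle, and then to evaluate that cap product against a conveniently chosen section. Over $S^{2i+1}\times_{S^1}A$ consider the complex line bundle $L=(S^{2i+1}\times A)\times_{S^1}\bC_1$ associated to the weight-one representation; its Euler class is the equivariant parameter $u$. The last coordinate $z\mapsto z_i$ on $S^{2i+1}\subset\bC^{i+1}$ is equivariant of weight one, hence a section $\sigma_0$ of $L$, and it vanishes transversally along $S^{2i-1}\times_{S^1}A\hookrightarrow S^{2i+1}\times_{S^1}A$, which is by definition the pseudocycle $\mu^A_{i-1}$; thus $[\mu^A_{i-1}]=u\cap[S^{2i+1}\times_{S^1}A]$. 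When $\partial A=\emptyset$ the total space $S^{2i+1}\times_{S^1}A$ is a closed oriented manifold, the space of sections of $L$ is affine and so connected, and a generic homotopy between two sections sweeps out a cobordism between their zero sets; hence it suffices to exhibit one section $s$ of $L$ whose zero set, with its complex coorientation, represents $\sum_{\lambda_k=2}\epsilon(F_k)[\mu^{F_k}_i]+\sum_k n_k[\mu^{BD(p^{-1}B_k)^o}_i]$, after which $[\mu^A_{i-1}]=[Z(\sigma_0)]=[Z(s)]$ yields the asserted identity.

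The section $s$ is assembled from local models by a partition of unity. Near a codimension-two fixed component $F_k$, semifreeness provides an $S^1$-equivariant tubular neighbourhood $\nu(F_k)$ which is the unit disk bundle of a complex line bundle $\ell_k\to F_k$ carrying a weight-one $S^1$-action; the fibre coordinate $v$ is then an equivariant weight-one function on $S^{2i+1}\times_{S^1}\nu(F_k)$, i.e.\ a local section of $L$, vanishing transversally exactly along $S^{2i+1}\times_{S^1}F_k=\bC P^i\times F_k=\mu^{F_k}_i$; comparing the complex coorientation of this zero set against the ambient orientation produces the sign $\epsilon(F_k)\in\{\pm1\}$ (recording compatibility of the complex structure on $\ell_k$ with the orientation, equivalently the sign of the normal weight). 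Over the free locus $A\setminus F$ the bundle $L$ descends to a line bundle $\bar L$ on $B:=(A\setminus F)/S^1$, a generic section of a suitable extension of $\bar L$ to the compactification $A/S^1$ has zero divisor $\sum_k n_k B_k$ with $n_k\in\bZ$, and pulling this back along $p$, taking the blow-down/closure and its open part $BD(p^{-1}B_k)^o$, and forming $S^{2i+1}\times_{S^1}(-)$, yields the pseudocycles $\mu^{BD(p^{-1}B_k)^o}_i$; fixed components of codimension $>2$ also contribute codimension-two zero loci of a generic local section of $L$ near them, which are likewise packaged among the $B_k$. Interpolating the local sections in the overlap collars, arranging transversality so that no spurious zeros are created and the pieces abut cleanly along $\partial\nu(F_k)$, produces a global section $s$ with the required zero set; note that only codimension-two fixed components can appear as honest $\mu^{F_k}_i$-terms, since $\deg\mu^{F_k}_i=2i+\dim F_k$ must equal $\deg\mu^A_{i-1}=2(i-1)+\dim A$.

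For $\partial A\neq\emptyset$ the space $S^{2i+1}\times_{S^1}A$ has boundary $S^{2i+1}\times_{S^1}\partial A$, so zero sets of homotopic sections are cobordant only relative to their traces on this boundary. Running the same construction equivariantly for the pair $(A,\partial A)$ — which is where Assumption \ref{assumption:mfdwbdry-assumption} is used, to guarantee that $F$ and the divisors $B_k$ meet $\partial A$ transversally and that the sections extend across it — produces the relation with boundary corrections: each $B_k$ acquires a boundary $\partial B_k\subset\partial B$, the pseudocycle appearing in the relation is the one glued along this common boundary, $\mu^{BD(p^{-1}B_k)^{wc}}_i\#\mu^{\partial B_k}_{i+1}$, and the residual boundary of $\mu^A_{i-1}$ over $\partial A$ is what is recorded by $\mu^A_{i-1}\#_{\mu^N_{i-1}}\mu^{N/S^1}_i$; the swept-out cobordism, restricted to $S^{2i+1}\times_{S^1}\partial A$, is precisely the null-homology putting the relation in this form.

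The main obstacle is the analysis near the fixed locus together with the bookkeeping of the patching in the pseudocycle category: pinning down the signs $\epsilon(F_k)$ and the integers $n_k$, and — since the ``downstairs'' divisors $B_k$ generically limit onto $F$ — interpolating in the collars around $\nu(F_k)$ so that $Z(s)$ is a pseudocycle with exactly the prescribed boundary rather than some larger or ill-controlled set. In the boundary case one must in addition track how all of these pieces, and the cobordism between them, restrict to $\partial A$, which is what forces the ``$\#$''-glued terms. These are exactly the verifications carried out in detail in Section \ref{sec:LbP}.
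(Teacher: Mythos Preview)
Your approach via the Euler class of the weight-one line bundle $L$ is a genuinely different route from the paper's. The paper does not construct two sections of $L$ and compare their zero loci; instead it writes down by hand a pseudocycle bordism $s_{B,i}$ with domain $\overline{D_{2i}}\times\text{Bl}_B(\widecheck{W})^{wc}\setminus Z_i$ (equation~\eqref{equation:pseudo-bordism}), checks that its omega-limit set has codimension at least $2$, and reads off the three boundary types directly: the $\widecheck{E}_{F_k}$-boundaries give the $\epsilon(F_k)\mu_i^{F_k}$ terms, the $E_B$-boundaries give the $n_k\,\mu_i^{BD(p^{-1}B_k)}$ terms via the degree-$n_k$ wrapping of the blown-up section $s_B$ around the fibres of $p^{-1}B_k$, and the $\partial\overline{D_{2i}}$-boundary gives $-\mu_{i-1}^A$. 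Your final sentence, that ``these are exactly the verifications carried out in Section~\ref{sec:LbP}'', is therefore not accurate: the partition-of-unity patching of local sections that you describe is not what the paper does.

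That said, your argument is correct in outline for the homological statement, and the two approaches are dual: the paper's section $s$ of the principal $S^1$-bundle over $\widecheck{W}\setminus B$ is essentially the data of a nonvanishing section of your $\bar L$, and the paper's $n_k$ (Definition~\ref{definition:weight}) is the local degree of that section near its zero. What the paper's explicit-bordism route buys, and what your Euler-class argument does not directly provide, is a concrete parameter manifold for the bordism --- the domain $\overline{D_{2i}}\times\text{Bl}_B(\widecheck{W})^{wc}\setminus Z_i$ itself --- which is exactly what Section~\ref{sec:lifting-to-moduli-spaces} needs to define the one-dimensional moduli space $\mathcal{M}_{\nu_i}$ in Theorem~\ref{theorem:lift-to-moduli}. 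A generic homotopy of sections produces \emph{some} cobordism, but not one with this product-over-$D_{2i}$ structure, and the lifting argument uses that structure in an essential way. Two smaller points: your reuse of the letter $B$ for $(A\setminus F)/S^1$ clashes with the paper's $B$ (the codimension-two trivialising locus), and the phrase ``extension of $\bar L$ to the compactification $A/S^1$'' glosses over the fact that $A/S^1$ is not a manifold near $F$ --- the paper's real blow-up $W=\text{Bl}_F(A)$ is precisely the device that replaces this singular quotient by something smooth.
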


Here, the $F_k$ are connected components of the $S^1$-fixed-point set of $A$, the nonnegative integers $\lambda_k$ are the codimensions of the respective $F_k$, and the $B_k$ are connected components of the codimension $2$ submanifold $B \subset (A \setminus F) /S^1$, such that the removal of $B$ trivialises the $S^1$-bundle $$p: A \setminus F \rightarrow (A \setminus F) / S^1.$$ Further, $BD$ is the blow-down of the oriented blow-up; alternatively, it is the collapsing of the fibres of a tubular neighbourhood of $F$, thus providing a smooth map $A \setminus F \rightarrow A$.

We next recall that when $E$ arises from a Lefschetz fibration with a fibre removed, with the conditions recalled in Section \ref{subsec:sympl-coh}, we can define $S^1$-equivariant symplectic cochains and $S^1$-equivariant symplectic cohomology, respectively $SC^*_{eq}(E)$ and $SH^*_{eq}(E)$, as in Section \ref{subsec:s1-eq-sympl-coh}. 

In Section \ref{subsec:lifting-LbP}, we use LbP to prove Theorem \ref{theorem:lift-to-moduli}, which states the following: suppose we are given some $S^1$-equivariant chains $\Psi_i \in SC^*_{eq}(E)$, each constructed by counting the number of points in the moduli spaces $\mathcal{M}_{\mu_i^A, i}$, where for each $i$, the moduli space $\mathcal{M}_{\mu_i^A, i}$ consists of perturbed, pseudoholomorphic maps parametrised by $S^{2i+1} \times_{S^1} A$  (wherein $A$ is a space of marked domains equipped with an $S^1$-action). Suppose further that $\Psi'_i \in SC^*_{eq}(E)$ is constructed by counting the number of points in a moduli space $\mathcal{M}_{\mu_i^C, i}$, where $\mathcal{M}_{\mu_i^C, i}$ is a moduli space of holomorphic curves parametrised by $S^{2i+1} \times_{S^1} C$ for $C \subset A$ being the oriented union of the codimension $2$ submanifolds $F_k$, and the $p^{-1}B_k$, as above and in \eqref{equation:lbp1}. We also need to provide some control over our choices to order to ensure that the lifting procedure is well behaved (i.e. we do not get any additional terms than those in Theorem \ref{theorem:lift-to-moduli}). In this case we get equality between the following two cycles in $S^1$-equivariant symplectic cohomology:

\begin{theorem*}[\ref{theorem:lift-to-moduli}]

$$u \sum_i \Psi_i u^{i} = \sum_i \Psi'_i u^i \in SH^*_{S^1}(E).$$
\end{theorem*}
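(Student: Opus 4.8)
The plan is to lift the homological relation of Theorem \ref{theorem:lbp} from the parameter space $A$ to the level of moduli spaces: pull back the pseudoholomorphic curve problem over a cobordism realising that relation, and then extract a chain-level primitive for the difference of the two counts.

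\textbf{Step 1: finite approximation and the cobordism.} I would first fix a finite approximation $S^{2N+1}$ of $ES^1$ with $N$ larger than every relevant $i$, so that the parametrising spaces $S^{2N+1}\times_{S^1}A$ and $S^{2N+1}\times_{S^1}C$ are honest compact finite-dimensional manifolds (with boundary and corners when $\partial A\neq\emptyset$ and Assumption \ref{assumption:mfdwbdry-assumption} is imposed), and stabilise in $N$ at the end. Writing $\mu_i^C:=\sum_{\lambda_k=2}\epsilon(F_k)\mu_i^{F_k}+\sum_k n_k\,\mu_i^{BD(p^{-1}B_k)^o}$, the proof of Theorem \ref{theorem:lbp} produces, for each $i$, not merely the homology identity $[\mu_i^C]=[\mu_{i-1}^A]$ in $H^{S^1}_{2i+\dim A-2}(A)$, but an explicit compact oriented pseudo-chain $W_i$ — built from the oriented blow-up of $A$ along $F$ and the trivialisation of $p$ over the complement of $B$ — together with a map $W_i\to S^{2N+1}\times_{S^1}A$ whose boundary, as a pseudocycle, is $\mu_i^C\sqcup(-\mu_{i-1}^A)$.

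\textbf{Step 2: lifting the moduli problem over $W_i$.} Using the ``control over choices'' hypothesis, I would choose the perturbation data defining $\mathcal{M}_{\mu_{i-1}^A,i-1}$ and $\mathcal{M}_{\mu_i^C,i}$ to be the restrictions, to the two boundary faces of $W_i$, of a single regular perturbation over $W_i$. This produces a moduli space $\mathcal{M}_{W_i}$ of perturbed pseudoholomorphic maps parametrised by $W_i$, cut out transversally, of dimension one greater than the $0$-dimensional moduli spaces counted by $\Psi_{i-1}$ and $\Psi'_i$. Its codimension-one boundary then consists of (i) the fibre over $\partial W_i$, which is exactly $\mathcal{M}_{\mu_i^C,i}\sqcup(-\mathcal{M}_{\mu_{i-1}^A,i-1})$, and (ii) the broken/bubbled configurations from the equivariant Floer compactification, which assemble into the differential $d$ of $SC^*_{eq}(E)$ applied to the chain $\Theta_i$ obtained by counting $\mathcal{M}_{W_i}$. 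Gromov--Floer compactness together with the hypotheses governing how these moduli spaces fibre over the strata of the homotopy quotient rule out any further boundary strata — in particular any spurious contribution near the locus $F$ where the blow-down maps $BD$ fail to be smooth, and (in the boundary case) any contribution not already present in the $\#$-terms of Theorem \ref{theorem:lbp}. Reading off $\partial\mathcal{M}_{W_i}$ with its induced orientation, and tracking the integers $\epsilon(F_k),n_k$, gives $\Psi'_i-\Psi_{i-1}=\pm\,d\Theta_i$ in $SC^*_{eq}(E)$.

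\textbf{Step 3: assembling the equivariant identity.} Multiplying the chain identity by $u^i$ and summing over $i$, and using that $u$ is central and $d$ is $u$-linear, one gets
\[ \sum_i\Psi'_i u^i-\sum_i\Psi_{i-1}u^i=\sum_i\Psi'_i u^i-u\sum_i\Psi_i u^i=d\Big(\sum_i\Theta_i u^i\Big), \]
while the energy bounds that make each $\Psi_i,\Psi'_i$ well defined also guarantee $\sum_i\Theta_i u^i\in SC^*_{eq}(E)$; passing to cohomology yields $u\sum_i\Psi_i u^i=\sum_i\Psi'_i u^i$ in $SH^*_{S^1}(E)$. The main obstacle is Step 2: achieving transversality for $\mathcal{M}_{W_i}$ coherently with the perturbations already fixed on $\partial W_i$, and — more seriously — verifying that the compactification of $\mathcal{M}_{W_i}$ has no boundary faces beyond the two expected ones and the Floer-breaking ones. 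This is precisely where the structural hypotheses on the fibration of the moduli spaces over the strata of $S^{2N+1}\times_{S^1}A$ are used, where Assumption \ref{assumption:mfdwbdry-assumption} enters in the $\partial A\neq\emptyset$ case, and where the orientation conventions and the coefficients $\epsilon(F_k),n_k$ must be matched against those of Theorem \ref{theorem:lbp}.
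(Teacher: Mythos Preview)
Your proposal is correct and follows essentially the same approach as the paper: lift the LbP pseudocycle bordism $\nu_i$ to a one-dimensional moduli space, identify the two geometric boundary faces with $\mathcal{M}_{\mu^C_i}$ and $\mathcal{M}_{\mu^A_{i-1}}$, and identify the Floer-type breakings with the equivariant differential. The only cosmetic difference is that the paper works directly in the Morse--Bott flowline model on $S^\infty$ (so the $\mu^A_{i-1}$-face appears as breaking of $w$ at its negative end when $w(0)\to\partial D_{2i}$) rather than via a finite $S^{2N+1}$ approximation, and the chain-level identity is stated only after summing over $i$ rather than as your ``$\Psi'_i-\Psi_{i-1}=d\Theta_i$'' for each fixed $i$ (the latter is slightly imprecise since $d^{eq}$ mixes $u$-powers, but your Step~3 repairs this).
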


Here $u \in H^2(BS^1;\bZ)$ is the equivariant variable.

Finally, in Section \ref{sec:ourexample} we use repeated applications of Theorem \ref{theorem:lift-to-moduli} in the case of the Borman-Sheridan class to prove the below equality and thus, as a special case (for $n=1,2$), answer a conjecture due to Seidel \cite[Conjecture 3.3]{lefschetz3}.

\begin{theorem*}[\ref{theorem:general-solution}]
\begin{equation*}
\begin{array}{l} n! \cdot \Biggl( u^{n} \tilde{s}^{(n)}_{eq} + PSS_{eq} \biggl(  \sum_{j=2}^{n-1}  u^{2n-j-1} \xi^j \biggr) \Biggr) = \\ PSS_{eq} \Biggl( \sum_{j=1}^{n-1} ((n-j-1)!)^2 \cdot u^{2(n-j)-1} \biggl( \Bigl(\clubsuit^{\circ j-1} (z^{(1)})\Bigr) \cup \Bigl( (n-j) u + [M] \Bigr) \biggr) *_F^{(n-j)} [M]   \Biggr|_E   \Biggr), \end{array}
\end{equation*}
where $\xi^j$ are pseudocycles of Gromov-Witten invariants in $F$ with $j$-fold tangencies, and $\clubsuit$ is the operation on quantum cohomology of $F$ ``cup product with a fibre, then take the degree $1$ quantum product with a fibre".
\end{theorem*}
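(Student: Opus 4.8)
The plan is to set up the iterated equivariant Borman--Sheridan classes $\tilde{s}^{(n)}_{eq}$ as point-counts of $S^1$-equivariant moduli spaces of exactly the kind handled by Theorem \ref{theorem:lift-to-moduli}, and then to apply that theorem $n$ times, each application replacing one power of the equivariant variable $u$ by a sum of strictly lower-complexity contributions supported near the fixed locus $F$ and its codimension-$2$ companions $p^{-1}B_k$. First I would fix, for the relevant space $A$ of marked domains (spheres/disks carrying the rotation $S^1$-action together with the tangency marked point defining the Borman--Sheridan class), the families $\mathcal{M}_{\mu_i^A, i}$ of perturbed pseudoholomorphic maps parametrised by $S^{2i+1}\times_{S^1}A$, verify that their point-counts assemble into cochains $\Psi_i$ with $\sum_i \Psi_i u^i$ representing (a version of) $\tilde{s}^{(n)}_{eq}$, and check that the whole family satisfies the ``niceness'' hypotheses of Theorem \ref{theorem:lift-to-moduli}: controlled degeneration of the domains over the nested strata of $BS^1$, and a consistent choice of almost complex structures and Hamiltonian perturbations so that the lift of LbP introduces no boundary terms beyond those recorded in Theorem \ref{theorem:lbp}.

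Second, I would identify the output of a single application of Theorem \ref{theorem:lift-to-moduli} in this setting. The fixed-locus terms $[\mu_i^{F_k}]$ become counts of curves constrained to $F$; a Gromov-compactness / neck-stretching argument separating the component in $F$ from the component meeting it transversally shows these are precisely the Gromov--Witten pseudocycles $\xi^j$ with $j$-fold tangency to $F$, the tangency order recording which normal stratum the original family sat over. The codimension-$2$ terms $[\mu_i^{BD(p^{-1}B_k)^o}]$ are where the operation $\clubsuit$ enters: a curve forced through $p^{-1}B_k$ acquires, after restriction to $F$, a cup product with the fibre class followed by a degree-$1$ quantum product with the fibre, which is the definition of $\clubsuit$. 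The residual $[\mu_{i-1}^A]$-type term is of the same shape as the input but with equivariant order lowered by one, so after re-presenting it (and the pieces above) as moduli over homotopy quotients of new domain spaces, the construction can be iterated.

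Third, I would run the iteration. At the stage that produces the $j$-indexed summand, one is applying LbP to a family whose $F$-component already carries $j-1$ fibre insertions --- hence the $\clubsuit^{\circ j-1}(z^{(1)})$ --- while the accumulated equivariant weight and one further fibre insertion combine into the cup with $(n-j)u + [M]$, and the successive quantum corrections in $F$ produce the factor $*_F^{(n-j)}[M]$; the overall power $u^{2(n-j)-1}$ is the bookkeeping of how many $u$'s have been peeled against how many remain. The combinatorial coefficients come from counting orderings: the $n!$ on the left from the $n$ ordered fibre insertions defining $\tilde{s}^{(n)}_{eq}$, and the $((n-j-1)!)^2$ from independent re-symmetrisations of the remaining insertions on the symplectic-cohomology side and on the $F$-side of the $(n-j)$-fold quantum product. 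I would pin down the exponents by induction on $n$, checking the base cases $n=1$ and $n=2$ directly against \cite[Conjecture 3.3]{lefschetz3}, which is precisely the content of Seidel's conjecture in those cases.

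The main obstacle I anticipate is not the algebra but verifying, at each stage of the iteration, that the moduli spaces produced are again of the precise form required by Theorem \ref{theorem:lift-to-moduli}: that the new parametrising space is again (a stratum of) a homotopy quotient of a space of marked domains carrying a semifree $S^1$-action, and that the compatibility of auxiliary choices can be propagated so that no extra terms appear. Concretely this is a simultaneous transversality-and-gluing problem --- fibre bubbling in $F$, sphere bubbling from the quantum products in $F$, and the $S^1$-family domain degeneration must be handled at once without the strata interfering --- and that is where most of the technical work lies. A secondary subtlety is the restriction $|_E$ and the interplay between the closed fibre $M$ and the open total space $E$: one must check that the identifications used to recognise $\xi^j$ and $\clubsuit$ intertwine correctly with the $PSS_{eq}$ maps and with restriction from the relevant compactification, so that the extra $PSS_{eq}\bigl(\sum_{j=2}^{n-1} u^{2n-j-1}\xi^j\bigr)$ appearing on the left is exactly accounted for.
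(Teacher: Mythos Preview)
Your proposal has a genuine gap, and it is one the paper explicitly warns against. You plan to take $\tilde{s}^{(n)}_{eq}$, set it up as a count over a parameter space $A$ of domains carrying the tangency marked point, and then apply Theorem~\ref{theorem:lift-to-moduli} iteratively to peel off powers of $u$. But as noted in the introduction, moduli spaces defined via tangency conditions are \emph{not} completely parametrised by their underlying space of domains: the ramification at the tangency point carries extra geometry invisible to $A$, so the hypotheses of Theorem~\ref{theorem:lift-to-moduli} fail and the ``na\"ive extension of the $n=1$ case'' breaks down for $n\ge 2$. Your ``niceness'' check in the first paragraph is exactly where this would bite, and your proposal contains no mechanism to circumvent it.

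The paper's route is quite different and avoids tangency conditions in the parameter space altogether. One takes $A=S(-1)$ to be the thimble with $2n$ \emph{ordinary} marked points $x_1,\dots,x_n,y_1,\dots,y_n$ (intersection conditions $M$ and $M_\dagger$ respectively), and applies LbP along \emph{two} distinct branches: one pushing $x_2,x_3,\dots,x_n$ successively to $\infty$, the other pushing $y_1,y_2,x_2,y_3,x_3,\dots$ in alternation. Both branches share the common fixed-locus term $[\mu^{[\{x_1,\dots,y_n\}]}_{i+2}]$ (all points bubbled together), which cancels on comparison. The tangency class $\tilde{s}^{(n)}_{eq}$ then \emph{emerges} only at the terminal configuration $[\{x_1,x_2\},x_3,\dots,x_n]$ of the first branch, where degree-counting forces the chain of bubbled spheres to be constant; the $\xi^j$ and $\clubsuit$ terms arise from the intermediate fixed loci of the two branches, and the factor $((n-j-1)!)^2$ comes from the ordering ambiguity of the remaining $x$'s and $y$'s on the last bubble. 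So the key idea you are missing is: replace tangency by a cluster of transverse marked points, and use two LbP branches with a shared $F$-term rather than a single linear iteration.
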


\begin{remark}
    There are multiple ways to attempt to approach proving Theorem \ref{theorem:general-solution}. One must be careful, however, and ensure for example that the moduli spaces in question are completely parametrised by their underlying space of domains. An example where this is not the case is where one introduces intersection points with tangency conditions, as one might for $n \ge 2$ in the above theorem. In this instance, there is a na\"ive extension of the $n=1$ case that fails, because the moduli spaces have extra geometry arising from the ramification points that is not ``seen" by the underlying parameter space. There are ways around this: one could make the lifting procedure more sophisticated to cope, but in our case we simply apply more iterations of lifting LbP, circumventing this issue without requiring more complicated theory.
\end{remark}

\section{Preliminaries}

\subsection{Background on pseudocycles}
\label{subsec:pseudocycles}
We will recall the definition of pseudocycles reasonably briefly, giving citation to \cite{zinger} for full details. The important point, due to Zinger, is that for any smooth manifold, the free Abelian group generated by pseudocycles, up to pseudocycle bordism (both of which we will define below) matches the integral homology of that manifold. 

We will begin with a smooth oriented manifold $M$. 

\begin{definition}
\label{definition:pseudocycle}
A smooth map $f: X \rightarrow M$ is called a \emph{pseudocycle of dimension $n$} if $X$ is an oriented manifold of dimension $n$, and the omega-limit-set $$\Omega_f := \bigcap_{\begin{array}{c} K \subset X \\ K \text{ compact} \end{array}} \overline{f(X \setminus K)}$$ is covered by a smooth map $g: X' \rightarrow M$ where $\dim X' \le n-2$, and the image of $f$ is precompact.  
\end{definition}

The omega-limit-set can be thought of as the ``boundary" of $f$. In general, we say that the omega-limit-set $\Omega_f$ has {\it dimension at most $m$} if it is covered by smooth maps from manifolds of dimension at most $m$.

\begin{definition}
\label{defn:pseudocyclebordism}
Given two pseudocycles $f_0: X_0 \rightarrow M, \ f_1: X_1 \rightarrow M$ of dimension $n$, a smooth map $H: Y \rightarrow N$ is called a \emph {pseudocycle bordism between $f_0$ and $f_1$} if $Y$ is a smooth manifold with boundary, $Y$ is of dimension $n+1$, and the boundary $\partial Y = X_1 \sqcup - X_0$ (the negative sign denoting negative orientation) such that $H|_{X_i} = f_i$. Further, $\Omega_H$ has dimension at most $n-1$.
\end{definition}

The important points that we need from \cite{zinger} are the following:

\begin{itemize}
    \item any closed element $A \in C_n(M;\bZ)$ defines some pseudocycle $\Phi(A): X \rightarrow M$ of dimension $n$, Vice versa any pseudocycle of dimension $n$ defines some closed element of $C_n(M;\bZ)$,
    \item given two closed elements $A,B \in C_n(M;\bZ)$ such that $A-B = dC$, there is a pseudocycle bordism between $\Phi(A)$ and $\Phi(B)$. Vice versa, given two bordant pseudocycles of dimension $n$, the corresponding closed elements of $C_n(M;\bZ)$ represent the same homology class.
\end{itemize}

We also note, without further discussion, that if one removes in Definition \ref{definition:pseudocycle} the condition ``precompact", then the homology group one obtains is locally-finite homology (see \cite{lf-pseudocycles}).

\subsection{Assumptions on $S^1$-actions, and $S^1$-equivariant homology}

In this paper, we will present a method of $S^1$-localisation using pseudocycles. We will need to make some reasonably restrictive assumptions along the way. 

\begin{assumption}[``Nice action" assumption]
\label{assumption:ass1}
Suppose that $A$ is a smooth oriented manifold, equipped with a smooth $S^1$-action, such that for all points of $A$ either $S^1$ acts either freely, or $S^1$ fixes that point. \end{assumption}

In general, this assumption can probably be weakened, but this is irrelevant to the current paper.


We will first discuss equivariant cohomology. Recall that one can model $E S^1$ as $S^{\infty} \subset \mathbb{C}^{\infty}$, and that this can be exhausted by closed submanifolds $S^{\infty} = \bigcup_i S^{2i+1}$. Explicitly, $$S^{2i+1} = \left\{ (z_0,z_1,\dots,z_i,0,0,\dots) \in \mathbb{C}^{\infty} \biggr| \sum_k |z_k|^2 = 1 \right\}.$$ This $ES^1$ is equipped with the diagonal rotation action, $$\theta \in S^1, \ v = (z_0,z_1,z_2,\dots) \in S^{\infty}, \quad \theta \cdot v = (e^{2 \pi i\theta} z_0, e^{2 \pi i\theta} z_1,e^{2 \pi i\theta} z_2,\dots).$$ Then we define $H^*_{S^1}(A) := H^*(S^{\infty} \times_{S^1} A)$. We define the classifying space $BS^1 := ES^1 / S^1$. We notice that there is a map $$\bZ [u ] \cong H^*(BS^1) \rightarrow H^*_{S^1}(A),$$ induced by projection, and so there is a module action of $\bZ [u ]$ on $H^*_{S^1}(A)$. 

Similarly, we define equivariant homology, $H_*^{S^1}(A) := H_*(S^{\infty} \times_{S^1} A)$. Suppose that we fix $i \ge 0$, and some pseudocycle $f: X \xrightarrow{} A$, such that $S^1$ acts on $X$ subject to Assumption \ref{assumption:ass1}, and $f$ is $S^1$-equivariant. Then we will define a natural pseudocycle $\mu^f_i$ in $S^{2i+1} \times_{S^1} A$. In particular, any closed embedded oriented $S^1$-invariant submanifold $C \subset A$ will define a sequence of pseudocycles denoted $\{ \mu^C_i \}_{i =0,1,2,\dots}$, which further represent homology classes (recalling Section \ref{subsec:pseudocycles}) $\{ [C]_i \}_{i=0,1,2,\dots}$.

Each of these odd spheres $S^{2i+1}$ has a nice cell decomposition (albeit not $S^1$-invariant), where one can define $D_{2i} \subset S^{2i+1}$ to be a $2i$-dimensional disc: $$D_{2i} := \left\{ (z_0,z_1,\dots,z_i,0,\dots) \in \mathbb{C}^{\infty} \biggr| \sum_k |z_k|^2 = 1, \ z_i \in \mathbb{R}_{> 0} \right\}.$$ Observe that $\partial \overline{D_{2i}}$ is a copy of $S^{2i-1}$. Further, observe that \begin{equation} \label{equation:d-to-s} S^{2i+1} \setminus \partial \overline{D_{2i}} \cong D_{2i} \times S^1, \end{equation} via an $S^1$-equivariant diffeomorphism. The following diagram commutes:

$$\xymatrix{
D_{2i}
\ar@{->}^-{\cong}[r]
\ar@{->}_-{}[drr]
&
D_{2i} \times \{ 0 \}
\ar@{^{(}->}[r]
&
D_{2i} \times S^1
\ar@{->}^-{\cong}[d]
&
\\
&
&
S^{2i+1} \setminus \partial \overline{D_{2i}}
\ar@{^{(}->}[r]
&
S^{2i+1}
\ar@{->}^-{\pi^{S^{2i+1}}}[d]
\\
&
&
&
\bC P^i = S^{2i+1} / S^1
}$$

One observes that the composition of these maps $D_{2i} \rightarrow \bC P^i = S^{2i+1} / S^1$ is a pseudocycle representative of the fundamental class, with the omega-limit set being the image of $\partial \overline{D_{2i}} \cong S^{2n-1}$ under $\pi^{S^{2i+1}}$, which is a manifold of dimension $(2n-1)-1 = 2n-2$. We abusively denote by $\iota_i : D_{2i} \hookrightarrow S^{2i+1} \hookrightarrow S^{\infty}$. 

Given some smooth manifold $X$ of dimension $n$ equipped with an $S^1$-action, and some $S^1$-equivariant smooth map $f: X \subset A$ that is also a pseudocycle, we can define \begin{equation} \label{equation:proj-inc} \mu^f_{i}: D_{2i} \times X \xrightarrow{\iota_i \times f} E S^1 \times M \rightarrow E S^1 \times_{S^1} M,\end{equation} where the first arrow is the product of inclusion with $f$, i.e. $(v,x) \mapsto (\iota_i(v),f(x)),$ and the second map is projection. Then $\mu^f_i$ is a pseudocycle. In particular, the omega limit set is contained in the image of $$\text{Im}(\mu^f_{i-1}) \bigcup D_{2i} \times \Omega_f,$$ hence is of codimension at most $2$. This is exactly because $\partial D_{2i}$ is the $S^1$-orbit of $D_{2i-2}$ (as in \eqref{equation:d-to-s}, $\partial \overline{D_{2i}}  \setminus S^1 \cong S^1 \times D_{2i-2}$), and the omega-limit-set is naturally $S^1$-invariant due to $f$ being $S^1$-equivariant. If $f$ is an embedding of an oriented $S^1$-invariant submanifold $C$, as considered above, then we will abusively denote $\mu_i^f =: \mu_i^C$ (where importantly the terminology ``$\mu_i^C$" remembers the orientation). For brevity, in the case where $f$ is the embedding of such a submanifold $X$.

In particular, given any $S^1$-fixed closed submanifold $X \subset M$, there is some family of integral homology classes $\{ [\mu_i^X] \}_{i \ge 0} \subset H_*^{S^1}(M)$ (via Zinger \cite{zinger}).

We notice that if one replaces $X$ with $X \setminus X_0$, where $X_0 \subset X$ is of codimension $2$, then the pseudocycle associated to the inclusion of $X \setminus X_0$ represents the same homology class as that of $X$. To see this, use for example the following pseudocycle bordism: take $(X \setminus X_0) \times [0,\tfrac{1}{2}]$ and glue to $X \times [\tfrac{1}{2},1]$ along $(X \setminus X_0) \times \{ \tfrac{1}{2} \}$, then remove $X_0 \times \{ \tfrac{1}{2} \}$. This is a smooth manifold, in particular, the resulting manifold is an open submanifold of $X \times [0,1]$. Then define $F(x,t) =  f(x)$ for all $$(x,t) \in (X \setminus X_0) \times [0,\tfrac{1}{2}] \cup_{X \setminus X_0 \times \{ \tfrac{1}{2} \} } X \times [\tfrac{1}{2},1] \setminus X_0 \times \{ \tfrac{1}{2} \}.$$  This bordism then has two boundaries, and its omega-limit-set is covered by the inclusion of $X_0,$ which is of codimension $2$. We will use this fact multiple times throughout.

\subsection{Symplectic cohomology}
\label{subsec:sympl-coh}

We will later, in Section \ref{subsec:s1-eq-sympl-coh}, define $S^1$-equivariant symplectic cohomology: in particular, it will be helpful to now recall the definition of symplectic cohomology as in \cite[Section 11]{lefschetz3}, and we will apply the Borel construction as in \cite{bourgeois-oancea}.

The setup is as follows: we will restrict ourselves to the case of a symplectic Lefschetz fibration, this being the case covered in \cite{lefschetz3}. In particular, we will be looking at a symplectic fibration $p: F \rightarrow \mathbb{CP}^1$, where $F$ is a manifold without boundary of dimension $n$, and $p$ is proper. The fibre over $0$ is some manifold $M$ of dimension $n-2$. We will make the necessary assumptions from the cited paper. 

\begin{assumption}[\cite{lefschetz3}]
\label{assumption:genfibrechern}
The fibration $p$ is symplectically trivial over a small disc $B(\epsilon,0)$ around $0 \in \mathbb{CP}^1$. The class of the general fibre $[M] \in H_{n-2}(F)$ is Poincar\'e dual to $c_1(F) \in H^2(F)$.
\end{assumption}


Following is the important terminology. For any $* \in \mathbb{CP}^1$, set $M_* = f^{-1}(\{ * \})$. Fix $\dagger \in B := B(\epsilon,0)$. Denote by $E = F \setminus M$. Then $E$ is Calabi-Yau (by Assumption \ref{assumption:genfibrechern}, $c_1(E) = 0$). Note that $E \xhookrightarrow{\iota} F$ induces a pullback $\iota^*: H^*(F) \rightarrow H^*(E)$.

We thus consider symplectic cohomology on $E$, which is equipped with a proper $p: E \rightarrow \bC$ that is a locally trival symplectic fibration over an annulus around $\infty$, say $\{ b \in \bC : |b| > r_0 \}$ (the restriction of $B(\epsilon,0)$). We will fix some divisor $\Omega$ to be a representative of the symplectic class $\omega \in H^2(F)$.

\begin{properties}[Properties of $H$]
\label{properties:H}
One chooses some $H=(H_t)$ as follows: first choose a sequence of annuli $$W_j = \{ r^{-}_j \le |b| \le r^{+}_j \} \subset \{ b \in \bC : |b| > r_0 \} \setminus \bC,$$ such that $r_0< r_1^- < r_1^+ < r_2^- < \dots$, and let $W = \bigcup_i W_i$. We next choose a sequence $0<a_1 < a_2 < \dots \in \bR \setminus \bZ$ such that $\lim_j a_j = \infty$. We then choose a time-dependent Hamiltonian $H$ such that $$Dp_x(X_H) = a_j (2 \phi i p(x) \partial_{p(x)}),$$ for $x \in p^{-1}(W_j)$. We also request that, if $\mathcal{P}(H)$ is the set of $1$-periodic orbits of $H$, that any $x \in \mathcal{P}(H)$ is nondegenerate, disjoint from $\Omega$, and that if $|p(x(t))|> r^+_j$ then the loop $p(x)$ winds $>a_j$ times around $0$. This can be achieved generically in a small neighbourhood of some fixed Hamiltonian $\tilde{H}$, see \cite[Setup 11.6]{lefschetz3}.
\end{properties}

\begin{properties}
\label{properties:J}
One chooses some $J=(J_t)$ an almost complex structure on $F$ as follows: we require that each $J_t$ is compatible, and that $p$ is $J$-holomorphic on $p^{-1}(W)$. Further, we request that this almost complex structure is such that $p$ is holomorphic over $B(\epsilon,0)$: in other words, for $b \in B(\epsilon,0)$, we obtain $$J|_{ \{ b \} \times M} = J_{M_b} \oplus i,$$ for some almost complex structure $J_{M_b}$ on $M_b$ (induced by some $J_M$ on $M$). 
\end{properties}

\begin{assumption}[\cite{lefschetz3}, replacing Assumption 2.1]
The restriction of the symplectic form $\omega$ on $F$ to the general fibre $M$ (denoted $\omega_M$), and the projection of $J$ to $M$ (i.e. $J_M$), are such that, for any $J_M$-holomorphic sphere $u$ such that $\omega_M([u]) >0$, this implies $c_1(TM)([u]) \ge 0$.
\end{assumption}

This condition is called {\it nonnegatively monotone}, and we note that the assumptions we make ensure that, for the choices of compatible almost-complex-structure, $F$ is also nonnegatively monotone (i.e. it has no holomorphic spheres of negative Chern number).

One can consider $1$-periodic orbits of $H$, and assign to them the Conley-Zehnder index $i(x) \in \bZ$. 

Then define $SC^*(E,H)$ to be the graded $\bZ$-vector space with one generator, of degree $i(x)$, for each $1$-periodic orbit $x$ such that $x([0,1]) \cap M = \emptyset$. Define $$d x = \sum_y (\sum_u \epsilon(u) q^{u \cdot \Omega})y,$$ where the sum is over $u$ such that \begin{itemize}
    \item $u: \bR \times S^1 \rightarrow E$, 
    \item $\partial_s u + J_t(u)(\partial_t u - x) = 0$,
    \item $\lim_{s \rightarrow + \infty} = y, \ \lim_{s \rightarrow -\infty} = x$.
\end{itemize}

The notation $\epsilon(u)$ is a choice of orientation for $u$.

The work in \cite[Section 7]{lefschetz3} shows that this moduli space of solutions $u$ behaves nicely: in particular, that the curves form a manifold of dimension $0$, and that a different choice of data yields a well-defined continuation map. This allows us to define $$SH^*(E) = SH^*(E,H) := H^*(SC^*(E,H),d).$$

\section{LbP (localisation by pseudocycles)}
\label{sec:LbP}

\subsection{LbP for a closed manifold}
\label{subsec:lbp-closedmfd}

Suppose that $A$ is a smooth closed $m$-dimensional manifold equipped with an $S^1$-action satisfying Assumption \ref{assumption:ass1}. Recall that the fixed-point set $F$ of the $S^1$-action is a union of submanifolds $F_k$ of respective even codimension $\lambda_k$. We will fix some notation: given an oriented submanifold $Y \subset X$ of some smooth oriented manifold, the notation $\text{Bl}_X (Y)$ will refer to the oriented real blowup of $Y$ along $X$. In particular, one can view this as replacing $Y$ with its unit normal bundle: alternatively, this space is homotopic to $X$ with some tubular neighbourhood of $Y$ removed. We use the notation of a blowup as this is more fitting with the geometry of the situation. The notation $\text{Bl}^{\bC}$, when used, will refer to the complex blowup in the traditional sense.

We will first replace $A$ with $W = \text{Bl}_F(A)$, by which we mean $\text{Bl}_{F_1}(\text{Bl}_{F_2} \dots (A) \dots)$.  We define the blowdown map $BD: W \rightarrow A$. We note that there is (by assumption) an induced free $S^1$-action on $W$, and so there is a principal $S^1$-bundle $$p:W \rightarrow \widecheck{W} := W / S^1.$$ We denote by $E_{F_k}$ the exceptional divisor arising from $F_k$ (i.e. the unit normal bundle of $F_k$ in $M$), which form the disjoint boundary components of $W$. Their $S^1$-quotients $\widecheck{E}_{F_k} = E_{F_k}/ S^1$ are the disjoint boundary components of $\widecheck{W}$.

We note that there is some codimension $2$ submanifold $B \subset \widecheck{W}$ such that the restriction of $p$ to $W \setminus p^{-1}B$, denoted $$p_{-B}: W \setminus p^{-1}B \rightarrow \widecheck{W} \setminus B,$$ is trivial. To determine some choice of $B$, one can construct the associated complex line bundle $W \times_{S_1} \bC \rightarrow \widecheck{W}$, pick a generic section, and $B$ is the vanishing locus of that section. 

Once we restrict to $W \setminus p^{-1}B$, the principal $S^1$-bundle $p_{-B}$ has a section $s: \widecheck{W} \setminus B \rightarrow W \setminus p^{-1}B$. Therefore, as oriented $S^1$-manifolds we obtain: \begin{equation} \label{equation:trivialbundle} W \setminus p^{-1}B \cong S^1 \times s\left( \widecheck{W} \setminus B \right).\end{equation} Providing $s\left( \widecheck{W} \setminus B \right)$ with the induced orientation, we pull back via $s$ to yield a choice of orientation on $\widecheck{W}$.


Split $B = B_1 \sqcup \dots \sqcup B_j$ as connected components. Observe that the section $s$ can be modified to a ``section" on the oriented real blowup as follows. First, we note that a generic choice of $B$ is such that $B$ and $\widecheck{E}_{F_k}$ intersect transversely. Hence, $E_B$ and $\widecheck{E}_{F_k}$ intersect transversely. We then define $\text{Bl}_B({\widecheck{W}})^{wc}$ to be:
\begin{itemize}
    \item start with $\text{Bl}_B({\widecheck{W}})$,
    \item for each $k$ such that $\lambda_k = 2$, we remove $E_B \cap \widecheck{E}_{F_k}$,
    \item for each $k$ such that $\lambda_k > 2$, we remove $\widecheck{E}_{F_k}$.
\end{itemize}

We now show that $s$ extends to $$s_B: \text{Bl}_B({\widecheck{W}})^{wc} \rightarrow W.$$ We will abusively denote by $E_B$ the restriction of the exceptional divisor $E_B$ in $\text{Bl}_B({\widecheck{W}})$ to $\text{Bl}_B({\widecheck{W}})^{wc}$. Then we can pick $s_B$ such that $s_B(E_B) \subset p^{-1}(B)$. To see this, we first observe that $s$ restricts to a map $$s|_{\widecheck{W} \setminus \widecheck{T}}: \widecheck{W} \setminus \widecheck{T} \rightarrow W \setminus T,$$ by removing an $S^1$-equivariant tubular neighbourhood $T$ of $p^{-1}B$ in $W$, diffeomorphic to the unit normal bundle of $p^{-1}B$ in $W$, and the quotient tubular neighbourhood $\widecheck{T}$ of $B$ (by the $S^1$-equivariance) in $\widecheck{W}$. Next we observe that both $\text{Bl}_{p^{-1}B} T$ and $\text{Bl}_{B} \widecheck{T}$ are $D^1$-bundles over the exceptional divisor $E_B$.

We can decompose \begin{equation} \label{equation:decomp-blowup-1} \text{Bl}_{p^{-1}B}(W) \cong W \setminus T \bigcup_{\partial T \simeq \{1 \} \times \partial T} [0,1] \times \partial T,\end{equation}and \begin{equation}\label{equation:decomp-blowup-2} \text{Bl}_B (\widecheck{W}) \cong \widecheck{W} \setminus \widecheck{T} \bigcup_{\partial \widecheck{T} \simeq \{ 1 \} \times \partial \widecheck{T}} [0,1] \times \partial \widecheck{T}.\end{equation} This relies on the fact that the exceptional divisor is diffeomorphic to the unit normal bundle, and that a $D^1$-bundle is trivial if and only if it is oriented. We note that there is an induced section $s_T : \partial \tfrac{1}{2} \widecheck{T} \rightarrow \partial \tfrac{1}{2} T$ arising from restriction, where if we associate $T$ (resp $\widecheck{T}$) with the disc bundle over $B$, then $\tfrac{1}{2} T$ (resp $\tfrac{1}{2} \widecheck{T}$) is associated the the disc subbundle of radius $\tfrac{1}{2}$.

Finally, define the blow-down map  $BD_{p^{-1}B}: \text{Bl}_{p^{-1}B}(W) \rightarrow W$, where we collapse the fibres of the unit normal bundle over $p^{-1}B$. We now write down the following section, using the decompositions of the blowups \eqref{equation:decomp-blowup-1}, and \eqref{equation:decomp-blowup-2}, in addition to a bump function $\beta : [0,1] \rightarrow \bR$:

\begin{equation}\begin{array}{ll} s_B : & \text{Bl}_B \widecheck{W} \rightarrow W \\ s_B(x) = & \begin{cases} \begin{array}{ll} s|_{\widecheck{W} \setminus \widecheck{T}}(x) & \text{ if } x \in \widecheck{W} \setminus \widecheck{T} \\ \beta(2t-1) s|_{\widecheck{W} \setminus \tfrac{1}{2} \widecheck{T}}(t,\hat{x}) + (1-\beta(2t-1)) BD_{p^{-1}B}(t,s_T(\hat{x})) & \text{ if } x = (t,\hat{x}) \in [\tfrac{1}{2},1] \times \partial \widecheck{T}, \\ BD_{p^{-1}B}(t,s_T(\hat{x})) & \text{ if } x = (t,\hat{x}) \in [0, \tfrac{1}{2}] \times \partial \widecheck{T}.\end{array}\end{cases} \end{array} \end{equation}

\begin{definition}
\label{definition:weight}
Given some connected component of $B$, i.e. some $B_k$, pick any $b \in B_k$ and consider the copy of $S^1$ sitting over $b$ in $\text{Bl}_B(\widecheck{W})$, corresponding to the $S^1$-fibre over $b$ of the unit normal bundle, $S_B(\widecheck{W})_b$, of $B$ in $\widecheck{W}$. The section $s_B$ can be restricted to $S_B(\widecheck{W})_b \cong S^1$, and it lands in $p^{-1}(b) \cong S^1$. Let $n_k$ be the oriented weight of $s_B|_{S_b(\widecheck{W})}$, i.e. an orientation on $B$, $p^{-1}B$ and $S_B(\widecheck{W})$ induces parametrisations on $S_B(\widecheck{W})_b \cong S^1$ and $p^{-1}B \cong S^1$, from whence we can deduce an element of $\pi_1(S^1) \cong \bZ$. This is alternatively the weight as an $S^1$-representation.

\end{definition}

We now continue with our setup for LbP. We will denote by $\text{Bl}_B (\widecheck{W})$ the real oriented blow-up of $\widecheck{W}$ along $B$ (which is a manifold with codimension $2$ corners), i.e. $\text{Bl}_B (\widecheck{W}) \cong \widecheck{W} \setminus T_B$ wherein $T_B$ is some tubular neighbourhood of $B \subset \widecheck{W}$. Denote by $\text{Bl}_B (\widecheck{W})^{wc}$ the manifold-with-corners $\text{Bl}_B(\widecheck{W})$ with the following removed:

\begin{itemize}
    \item exceptional divisors $E_{F_k}$ when $\lambda_k > 2$, 
    \item the codimension $2$ corners (the intersections of the exceptional divisor $E_B$ of $\text{Bl}_B (\widecheck{W})$ with the exceptional divisors $E_{F_k}$ when $\lambda_k = 2$).
\end{itemize}

In this way, $\text{Bl}_B (\widecheck{W})^{wc}$ is a (perhaps noncompact) smooth manifold with boundary.

We abusively notate the following smooth map \begin{equation} \label{equation:pseudo-bordism} s_{B,i} : \overline{D_{2i}}\times {\text{Bl}_B({\widecheck{W}})}^{wc}) \setminus Z_i \rightarrow E S^1 \times W \xrightarrow{id \times BD} E S^1 \times A \rightarrow E S^1 \times_{S^1} A.\end{equation} The set removed from the domain $Z_i$ is the codimension $2$ corners of $\overline{D_{2i}}\times {\text{Bl}_B({\widecheck{W}})}^{wc})$, i.e. $$Z_i := \Biggl(\partial \overline{D_{2i}} \times \biggl( E_{B} \setminus E_B \cap \widecheck{E}_{F_k}\biggr)\Biggr) \bigcup \Biggl(\partial \overline{D_{2i}} \times \biggl( \bigsqcup_{k : \ \lambda_k = 2} \widecheck{E}_{F_k} \setminus E_{B} \cap \widecheck{E}_{F_k}\biggr) \Biggr).$$ In particular, the domain is a smooth manifold of dimension $2i+m-1$ with boundary. 

We briefly note the following: if $\lambda_k = 2$ then $\widecheck{E}_{F_k} = E_{F_k} / S^1 \cong F_k$, and generically $B$ intersects $E_{F_k}$ transversely, so we can denote by $F^0_k = E_{F_k} \cap B \subset F_k$ a smooth codimension $2$ submanifold (in particular, $\dim(F^0_k) = m-4$. 

We further note that $BD(p^{-1}B)\setminus F$ is an embedded submanifold of $A$ of dimension $m-2$, and further defines a pseudocycle: this is because its omega-limit-set consists of the union of $F^0_k$ for $\lambda_k = 2$, and $F_j$ such that $\lambda_j \ge 4$, hence the omega-limit-set is of dimension at most $m-4$.

We will now use \eqref{equation:pseudo-bordism} to prove LbP.


\begin{theorem}[LbP] \label{theorem:lbp} \begin{equation} \label{equation:lbp1} \sum_{k \ : \ \lambda_k = 2} \epsilon(F_k) [\mu_i^F] + \sum_k n_k [\mu_i^{BD(p^{-1}B_k)^{wc}}] - [\mu_{i-1}^A] = 0.\end{equation}\end{theorem}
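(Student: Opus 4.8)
\emph{Proof sketch.} The plan is to show that the map $s_{B,i}$ of \eqref{equation:pseudo-bordism} is a pseudocycle bordism realising the relation \eqref{equation:lbp1}, by computing its restriction to each boundary face of its domain.

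\emph{Setting up the bordism.} Orient the domain of $s_{B,i}$ by the product of the standard orientation on $\overline{D_{2i}}$ with the orientation on $\widecheck{W}$ pulled back through the section as in \eqref{equation:trivialbundle}; once the corner set $Z_i=\partial\overline{D_{2i}}\times\partial(\text{Bl}_B(\widecheck{W})^{wc})$ is removed, this is a smooth oriented $(2i+m-1)$-manifold with boundary. Its image lies in $S^{2i+1}\times_{S^1}A$, which is compact, so the image is precompact. The only sources of noncompactness of the domain are the deletion of the $\widecheck{E}_{F_k}$ with $\lambda_k>2$ together with the codimension-$2$ corners, and the intrinsic omega-limit behaviour of $\iota_i$ on $D_{2i}$. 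The latter occurs over $\partial\overline{D_{2i}}$, which is now a genuine boundary face rather than part of $\Omega_{s_{B,i}}$; what survives in $\Omega_{s_{B,i}}$ limits onto the $F_k$ with $\lambda_k>2$, onto the $F^0_k=E_{F_k}\cap B$, and onto $\partial\overline{D_{2(i-1)}}\subset\partial\overline{D_{2i}}$, all covered by smooth maps from manifolds of dimension at most $2i+m-3$, using $\dim F_k^0=m-4$ and $\dim F_j\le m-4$ for $\lambda_j\ge 4$ exactly as in the paragraph preceding the theorem. Hence $s_{B,i}$ is a pseudocycle bordism in the sense of Definition \ref{defn:pseudocyclebordism}.

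\emph{Boundary faces.} Since $Z_i$ is precisely the codimension-$2$ corner of the product manifold $\overline{D_{2i}}\times\text{Bl}_B(\widecheck{W})^{wc}$, deleting it leaves the boundary
$$\Bigl(\partial\overline{D_{2i}}\times(\text{Bl}_B(\widecheck{W})^{wc})^{\circ}\Bigr)\ \sqcup\ \Bigl(D_{2i}\times\partial(\text{Bl}_B(\widecheck{W})^{wc})\Bigr),$$
and, using the description of $\partial(\text{Bl}_B(\widecheck{W})^{wc})$ given before the theorem, the second face splits as $\bigl(D_{2i}\times(E_B\setminus\text{corners})\bigr)\sqcup\bigsqcup_{k:\lambda_k=2}\bigl(D_{2i}\times(\widecheck{E}_{F_k}\setminus\text{corners})\bigr)$. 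I then identify $s_{B,i}$ on each. On $D_{2i}\times(\widecheck{E}_{F_k}\setminus\text{corners})$ with $\lambda_k=2$: here $\widecheck{E}_{F_k}\cong F_k$, and $s_B$ sends $\widecheck{E}_{F_k}$ into $E_{F_k}$ as a section of the circle bundle $BD\colon E_{F_k}\to F_k$, so $BD\circ s_B$ is this identification; thus the restriction is $\pm\mu_i^{F_k}$, the sign being the discrepancy $\epsilon(F_k)\in\{\pm1\}$ between the boundary orientation and the chosen orientation of $F_k$, and deleting the codimension-$2$ corner set does not change the pseudocycle class (by the observation on removing codimension-$2$ submanifolds in Section \ref{subsec:pseudocycles}). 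On $D_{2i}\times(E_B\setminus\text{corners})$ over a component $B_k$: $BD\circ s_B$ factors as the fibrewise degree-$n_k$ map $S_B(\widecheck{W})_b\to p^{-1}(b)$ of Definition \ref{definition:weight} followed by $BD$, so this face is the pullback of $\mu_i^{BD(p^{-1}B_k)^{wc}}$ along a fibrewise degree-$n_k$ self-cover, hence represents $n_k\,[\mu_i^{BD(p^{-1}B_k)^{wc}}]$, again up to a harmless codimension-$2$ deletion. On $\partial\overline{D_{2i}}\times(\text{Bl}_B(\widecheck{W})^{wc})^{\circ}$: applying \eqref{equation:d-to-s} with $i$ replaced by $i-1$, $\partial\overline{D_{2i}}=S^{2i-1}$ contains the dense open $S^1$-orbit $D_{2i-2}\times S^1$, on which $\iota_i$ is $(d,\theta)\mapsto\theta\cdot\iota_{i-1}(d)$; and \eqref{equation:trivialbundle} identifies $S^1\times s(\widecheck{W}\setminus B)\cong W\setminus p^{-1}B$, whose image under $BD$ is $A\setminus BD(p^{-1}B)$, a codimension-$2$ complement. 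Combining these, after the usual codimension-$2$ deletions the restriction becomes $(d,a)\mapsto[\iota_{i-1}(d),a]$ on $D_{2i-2}\times A$, i.e. $\pm\mu_{i-1}^A$, and the boundary-orientation convention fixes the sign as $-1$.

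\emph{Conclusion and main obstacle.} Since the sum of the boundary faces of the bordism $s_{B,i}$ is null-homologous in $H_*^{S^1}(A)$, the computation above gives $\sum_{k:\lambda_k=2}\epsilon(F_k)[\mu_i^{F_k}]+\sum_k n_k[\mu_i^{BD(p^{-1}B_k)^{wc}}]-[\mu_{i-1}^A]=0$, which is \eqref{equation:lbp1} (the first sum being $\sum_{k:\lambda_k=2}\epsilon(F_k)[\mu_i^{F_k}]$, as in the introductory statement). The step I expect to be most delicate is the third boundary face: one must check that \eqref{equation:trivialbundle}, the blow-down $BD$, and the cell structure \eqref{equation:d-to-s} assemble compatibly — in particular that the precise codimension-$2$ sets being discarded match up and that no additional boundary survives — so that this face genuinely is $-\mu_{i-1}^A$; pinning down the signs $\epsilon(F_k)$ and the weights $n_k$ is then a careful but routine orientation computation once the blow-up conventions are fixed.
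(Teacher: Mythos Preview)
Your proposal is correct and follows essentially the same route as the paper: use $s_{B,i}$ as the pseudocycle bordism, bound the dimension of its omega-limit set, and identify the three boundary faces as $\epsilon(F_k)\mu_i^{F_k}$, $n_k\,\mu_i^{BD(p^{-1}B_k)^{wc}}$, and $-\mu_{i-1}^A$. The paper's proof differs only in detail: it gives a sharper bound ($2i+m-4$ rather than your $2i+m-3$) on the omega-limit pieces by treating the four deletions separately, it justifies the $n_k$ coefficient via a pushforward of locally-finite fundamental classes rather than the phrase ``fibrewise degree-$n_k$ self-cover'', and for the $\partial\overline{D_{2i}}$ face it makes explicit the orientation-reversing step (precomposition with the antipodal map on the $S^1$ factor, coming from the diagonal action) that you correctly flag as the delicate point.
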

\begin{proof}
  We first demonstrate that the smooth map \eqref{equation:pseudo-bordism} is a pseudocycle bordism of dimension $2i+m-2$. To compute the dimension of its omega limit set, we observe that the omega limit set is the union of four sets: two sets arising from $\text{Bl}_B (\widecheck{W}) \setminus \text{Bl}_B (\widecheck{W})^{wc}$, and two cases arising from $Z_i$. We deal with them as follows:

\begin{itemize}
\item for each $k$ such that $\lambda_k = 2$, we have removed $E_B \cap \widecheck{E}_{F_k}$ from $\text{Bl}_B (\widecheck{W})$: the associated piece of the omega-limit-set is covered by the image of $D_{2i} \times F^0_k$, where we observe that $F^0_k$ is an embedded codimension $2$ submanifold of $F_k$, which itself is of codimension $2$ in $m$. Thus, this piece of the omega limit set is of dimension at most $2i + m-4$. 
\item for each $k$ such that $\lambda_k > 2$, we have removed $\widecheck{E}_{F_k}$ from $\text{Bl}_B (\widecheck{W})$: but $\lambda_k > 2$ implies $\lambda_k \ge 4$, and so the associated image from the blown-up section applied to $\widecheck{E}_{F_k}$ is covered by the image of $D_{2i} \times F_k$, hence the associated piece of the omega-limit-set is of dimension at most $2i+m-4$.
\item we removed $\partial \overline{D_{2i}} \times E_{B} \setminus E_B \cap \widecheck{E}_{F_k}$: the image of this under $s_{B,i}$ is covered by the image of $\mu^{p^{-1}B \setminus F^0_k}_{i-1}$, where we use that $p^{-1}B$ is invariant under the $S^1$-action, and $d \overline{D_{2i}}$ is the $S^1$-orbit of $\overline{D_{2i-2}}$. This is a map from a manifold of dimension at most $2i-2+m-2 = 2i+m-4$,
\item we removed $\partial \overline{D_{2i}} \times \widecheck{E}_{F_k} \setminus E_B \cap \widecheck{E}_{F_k}$ when $\lambda_k = 2$, and the image under this is covered by $\mu^{F_k \setminus F^0_k}_{i-1}$,  where we use that $F$ is invariant under the $S^1$-action, and $d \overline{D_{2i}}$ is the $S^1$-orbit of $\overline{D_{2i-2}}$. This is a map from a manifold of dimension at most $2i-2+m-2 = 2i+m-4$.
\end{itemize}

Hence, the omega limit set is of dimension at most $2i+m-4$.

Now we wish to compute the boundaries of this pseudocycle bordism. One considers the boundaries of $\overline{D_{2i}} \times {\text{Bl}_B({\widecheck{W}})}^{wc} \setminus Z_i$, which are:

\begin{enumerate}
    \item taking the product of $D_{2i}$ with the boundary of ${\text{Bl}_B({\widecheck{W}})}^{wc}$ that corresponds to the union of $\widecheck{E_{F_k}} \setminus (\widecheck{E_{F_k}} \cap E_B) \subset \widecheck{W}$ for $k$ such that $\lambda_k = 2$,
    \item taking the product of $D_{2i}$ with the boundary of ${\text{Bl}_B({\widecheck{W}})}^{wc}$ corresponding to the exceptional divisor $E_B \setminus (E_F \cap E_B)$, which represents the unit normal bundle of $B \setminus E_F \subset {\widecheck{W}} \setminus E_F$. Because $B$ is a union of connected components $B_k$, we obtain the union of the normal bundles for each $B_k$.
    \item taking the boundary of $D_{2i}$, which is diffeomorphic to $S^{2i-1}$. Up to the removal of a codimension $2$ submanifold, i.e. $S^{2i-3}$, this is diffeomorphic to $S^1 \times D_{2i-2}$.
\end{enumerate} 

Now we want to explore (respectively) what each of these boundaries yields as pseudocycles.

\begin{enumerate}
    \item In this case, for each $k$ such that $\lambda_k =2$, the associated pseudocycle is some sign multiplied by $\mu^{F_k \setminus F^0_k}_i$. Notice that $\widecheck{E}_{F_k}$ has an induced orientation as a boundary of $\widecheck{W}$ and $F$ came equipped with a choice of orientation in order to determine homology classes $\mu^F_i$. There is a sequence of maps \begin{equation}\label{equation:F-boundary} \widecheck{E}_{F_k} \setminus \left( \widecheck{E}_{F_k} \cap E_B \right) \xrightarrow{s_B} E_{F_k} \setminus (E_{F_k} \cap p^{-1}B)  \rightarrow F_k \setminus F^0_k.\end{equation} The first map is the section $s$, and the second map is the collapsing of the fibres. One observes that this map is necessarily the identity map, there is an obvious inverse, so we can assume that this is a copy of $F_k$ embedded in $\widecheck{W}$. Observe also that, by our choice of orientation on $\widecheck{W}$, boundary component submanifold $\widecheck{E}_{F_k} \setminus (\widecheck{E}_{F_k} \cap E_B)$ is in/outwardly oriented exactly when $E_{F_k} \setminus (E_{F_k} \cap p^{-1}B)$ is in/outwardly oriented. Thus define $\epsilon(F_k) = +1$ if $E_{F_k} \setminus (E_{F_k} \cap p^{-1}B)$ is outwardly oriented (and $-1$ if inwardly). By definition, if we pre-compose $\mu^{F_k \setminus F^0_k}_i$ with the map in \eqref{equation:F-boundary}, we obtain exactly $\epsilon(F_k)$ multiplied by the pseudocycle contribution from $D_{2i} \times \widecheck{E}_{F_k} \setminus \left( \widecheck{E}_{F_k} \cap E_B \right)$. Because $F^0_k$ is of codimension $2$, $\mu^{F_k \setminus F^0_k}_i = \mu^F_i$. We will denote this pseudocycle $\epsilon(F_k) \mu_i^{F_k}$ (abusing our earlier notation by representing by $F$ the embedding of $F \rightarrow M$).
    
    \item Notice, as in the last point, that $E_{B_k}$ comes equipped with an orientation as a boundary of $\text{Bl}_{B}(\widecheck{W})^{wc}$. Notice further that we have chosen some orientation of $p^{-1}B_k$ (in order to define the class $\mu_{i}^{BD((p^{-1} B_k)^{wc})}$), and by considering the local trivialisation of the fibre bundle (and using the induced orientation on the $S^1$-action) this is the same as choosing an orientation on $B_k$. With an orientation on $B_k$ and $E_{B_k}$, we obtain an orientation locally on the $S^1$-fibres of $E_{B_k} \rightarrow B_k$. Further, we notice that we can write the following map of bundles:
    $$\xymatrix{
E_{B_k}
\ar@{->}^-{s_B}[r]
\ar@{->}_-{}[d]
&
p^{-1} B_k
\ar@{->}^-{}[d]
\\
B_k
\ar@{->}^-{=}[r]
&
B_k
}$$ which is locally, on a trivialising neighbourhood $U \subset B_k$, of the form $n_k \times \text{Id}: S^1 \times U \rightarrow S^1 \times U$, by definition.
Further, because of the way in which we extended our section $s$ to $s_{B}$, we note that $s_{B,i}|_{D_{2i} \times E_{B_k}}$ has the same intersection number with any other pseudocycle as $n_k$ multiplied by the intersection number with $D_{2k} \times BD((p^{-1} B_k)^{wc})$. In particular, observing that we can consider the restriction $$s_{B_k,i}|_{D_{2i} \times E_{B_k}}: D_{2i} \times E_{B_k} \rightarrow D_{2i} \times BD(p^{-1}B_k)^{wc},$$ we observe that $$\left(s_{B_i}|_{D_{2i} \times E_{B_k}}\right)_*[D_{2i} \times E_{B_k}] = n_k [D_{2i} \times BD(p^{-1}B_k)^{wc}] \in H_*^{lf}(S^{2i+1} \times_{S^1} BD(p^{-1}B_k)^{wc}),$$ the classes $[-]$ being respectively the locally finite fundamental classes of $D_{2i} \times E_{B_k}$ and $D_{2i} \times BD(p^{-1}B_k)^{wc}$ (using \cite{lf-pseudocycles}). This implies that the pseudocycle associated to the boundary is $n_k \cdot \mu_{i}^{BD((p^{-1} B_k)^{wc})}$. We notice that the omega limit set is covered by the images of the $F_j$ such that $\lambda_j > 2$, and the $F^0_k$ such that $\lambda_k = 2$. Further, $F^0_k$ is codimension $2$ in $p^{-1}B_k$ for all $k$. 
    
    \item For the map $$s_{B,i}|_{S^1 \times D_{2i-2} \times \text{Bl}_B({\widecheck{W}})}^{wc} : S^1 \times D_{2i-2} \times \text{Bl}_B({\widecheck{W}})^{wc} \rightarrow E S^1 \times_{S^1} A,$$ we recall that we may remove submanifolds in the domain such that their image is of dimension at most $1+2i-2 + m-1 - 2 =2i+m-4$: by removing the $S^1 \times D_{2i-2} \times \widecheck{E}_{F_k}$ for $\lambda_k > 2$, and $S^1 \times D_{2i-2} \times E_B$, we observe that this pseudocycle is equivalent to the map $$S^1 \times D_{2i-2} \times \left( (A \setminus F)/S^1 \right) \setminus B) \rightarrow  ES^1 \times_{S^1} A.$$ By pre-swapping the first two terms (which preserves the orientation as $D_{2i-2}$ is of even dimension), we obtain \begin{equation} \label{equation:A-pseudocycle-bordism-boundary} D_{2i-2} \times S^1 \times \left( A \setminus F \setminus p^{-1}B \right) / S^1 \xrightarrow{\cong} S^1 \times D_{2i-2} \times \left( (A \setminus F)/S^1 \right) \setminus B) \xrightarrow{s_{B,i}} E S^1 \times_{S^1} A.\end{equation} Next recall that the $S^1$-action was diagonal, and so the $S^1 \times \left( A \setminus F \setminus p^{-1}B \right) / S^1$ factor on the left-hand side of \eqref{equation:A-pseudocycle-bordism-boundary} is oriented negatively with respect to Equation \eqref{equation:trivialbundle} (alternatively, it is \eqref{equation:trivialbundle} with the antipodal $S^1$-action). So we must finally pre-compose with the orientation-reversing diffeomorphism that is $id \times \text{antipodal map} \times id$. We thus notice that this is exactly the pseudocycle representing the negative of the $\mu_{i-1}^{(A \setminus F) \setminus p^{-1}B}$. Hence, we obtain the pseudocycle $-\mu_{i-1}^{A}$, again abusively denoting by $A$ the inclusion $A \rightarrow A$, which by our choice of orientation is obtained with coefficient $1$. 
\end{enumerate}

These pseudocycles above thus correspond respectively to the first, second and third terms in \eqref{equation:lbp1}.
\end{proof}

\begin{remark}
    
The $S^1$-localisation we state here is an abridged form of Theorem 3.3 in \cite{atiyahbott}. The relevant theorem is the following: 

\begin{theorem*}[Atiyah-Bott, \cite{atiyahbott}]
\label{theorem:s1-localisation}
Given a smooth action of $S^1$ acting on a smooth manifold $X$, denote by $i: F \rightarrow X$ the inclusion of the fixed point set, the map $$i^* : H^*_{S^1}(X) \rightarrow H^*_{S^1}(F),$$ has kernels and cokernels that are $u$-torsion. 

\end{theorem*}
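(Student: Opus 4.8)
The plan is to reduce the statement to the single fact that $S^1$-equivariant cohomology of a space carrying a fixed-point-free $S^1$-action is $u$-torsion, and then feed this into the Mayer--Vietoris sequence relating $X$, $F$ and $X\setminus F$.

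First I would set up the geometry. By the slice theorem, $F=X^{S^1}$ is a closed submanifold of $X$ admitting an open $S^1$-invariant tubular neighbourhood $U$ that $S^1$-equivariantly deformation retracts onto $F$, so $H^*_{S^1}(U)\cong H^*_{S^1}(F)$ compatibly with restriction from $X$. Put $V=X\setminus F$. Then $\{U,V\}$ is an open $S^1$-invariant cover of $X$ with $U\cap V=U\setminus F$, and neither $V$ nor $U\setminus F$ contains an $S^1$-fixed point.

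Second, I would prove the key lemma: if $Y$ is a finite-type smooth $S^1$-manifold with $Y^{S^1}=\emptyset$, then $H^*_{S^1}(Y)$ is a $u$-torsion module over $\bZ[u]=H^*(BS^1)$, meaning every class is annihilated by some power of $u$. Under Assumption~\ref{assumption:ass1} this is immediate: the action on $Y$ is free, so $H^*_{S^1}(Y)\cong H^*(Y/S^1)$ is concentrated in degrees $\le\dim(Y/S^1)$, and hence multiplication by any sufficiently large power of $u$ kills it. (For an arbitrary smooth action one instead covers $Y$ by finitely many invariant opens, each equivariantly modelled on $S^1/\Gamma\times(\text{open disc})$ with $\Gamma\subsetneq S^1$ a finite subgroup, so that locally $H^*_{S^1}\cong H^*(B\Gamma)$, on which $u$ acts through the restriction $H^2(BS^1)\to H^2(B\Gamma)$ --- a map which vanishes rationally --- and then propagates the torsion property through the finitely many Mayer--Vietoris sequences of the cover, using that torsion modules are closed under submodules, quotients and extensions; this is the version valid with $\bQ$-coefficients.)

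Finally I would combine these. Localising at $u$ is exact over $\bZ[u]$, so the $u$-inverted Mayer--Vietoris sequence of $\{U,V\}$ has every term contributed by $V$ and by $U\setminus F$ equal to zero, and therefore collapses to an isomorphism $H^*_{S^1}(X)[u^{-1}]\xrightarrow{\ \cong\ }H^*_{S^1}(U)[u^{-1}]\cong H^*_{S^1}(F)[u^{-1}]$; by naturality of the retraction this map is $i^*[u^{-1}]$. Since localisation commutes with kernels and cokernels, $\ker(i^*)$ and the cokernel of $i^*$ both vanish after inverting $u$, i.e.\ each of their elements is killed by a power of $u$ --- which is exactly the assertion. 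The only non-formal ingredient is the torsion lemma: in the semifree case of interest here it is a one-line degree bound, while in full generality it requires the slice theorem, finiteness of the orbit-type stratification of $X$, and the iterated Mayer--Vietoris bookkeeping sketched above.
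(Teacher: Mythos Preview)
Your argument is the standard and correct proof of Atiyah--Bott localisation: tubular neighbourhood of $F$, Mayer--Vietoris for the cover $\{U,V\}$, the torsion lemma for fixed-point-free actions, and then inversion of $u$. There is nothing to object to mathematically (with the usual caveat, which you flag, that outside the semifree case one should work rationally or else track the orders of the isotropy groups).

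However, the paper does not itself prove this statement. It appears only inside a remark, quoted verbatim from \cite{atiyahbott} as background, to situate the paper's own result. What the paper actually proves is Theorem~\ref{theorem:lbp} (LbP), which is not the same statement: rather than showing abstractly that $\ker i^*$ and $\mathrm{coker}\,i^*$ are $u$-torsion, LbP writes down an \emph{explicit} pseudocycle bordism in $ES^1\times_{S^1}A$ realising the identity
\[
[\mu^A_{i-1}] \;=\; \sum_{k:\lambda_k=2}\epsilon(F_k)[\mu^{F_k}_i] \;+\; \sum_k n_k[\mu^{BD(p^{-1}B_k)^{wc}}_i]
\]
between concrete cycles. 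The whole point is that this bordism is something one can then parametrise moduli spaces by (Section~\ref{sec:lifting-to-moduli-spaces}); the abstract Mayer--Vietoris/localisation proof you give, while perfectly valid for the cited theorem, produces no such geometric cycle and so would not feed into the lifting procedure that is the paper's goal. In short: your proof is correct for the theorem as stated, but that theorem is quoted rather than proved in the paper, and the paper's own theorem is a chain-level refinement with a quite different, hands-on argument.
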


It is important to draw parallels with our above result. The idea of our result is that, given an element of $H_*^{S^1}(X)$, by increasing the equivariant parameter space (i.e. considering $D_{2i+2}$ instead of $D_{2i}$), one may ``decrease the dimension" of the submanifold of $X$ that we consider. Doing this sufficiently many times, one intuitively expects to ``isolate" the fixed-point set $F$ (subject to some characteristic classes of the normal bundle).

We can see  that $F^0_k \subset p^{-1} B$ is related to the Euler class of the normal bundle of $F$ in $M$, which is not dissimilar to e.g. the Atiyah-Bott localisation formula. Future work will explore this link further.

\end{remark}

\subsection{LbP for a manifold with boundary}
\label{subsec:lbp-mfdwbdry}

We will extend the results of the previous section to the case where $A$ is a smooth compact $n$-dimensional manifold with boundary $N = \partial A$, although we will make the following extra assumption:

\begin{assumption}
\label{assumption:mfdwbdry-assumption}

The $S^1$-action acts freely on $N$, and for a generic choice of $B$ such that $W \setminus p^{-1}B \rightarrow \widecheck{W} \setminus B$ is trivial, then $\partial B := N/S^1 \cap B$ is transverse, and the map $p^{-1}(\partial B) \rightarrow \partial B$ is a trivial $S^1$-bundle. 

\end{assumption}

This assumption can be unnecessary, but circumventing it adds extra technical headaches regarding nested iteration, and it will not be necessary for the application in Section \ref{sec:lifting-to-moduli-spaces} of this paper. In particular, without the freeness condition on $N$, one would have to apply LbP to $N$ additionally. The triviality of the $S^1$-bundle $p^{-1}(\partial B) \rightarrow \partial B$ allows us to consider $\partial B$ as a submanifold of $N$, as opposed to being forced to iteratively remove a codimension $2$ submanifold of $\partial B$ and so forth. Assumption \ref{assumption:mfdwbdry-assumption} serves to simplify the eventual result, Corollary \ref{corollary:lbp-mfd-w-bdry}.

It is important here to note that the work in \cite{zinger} does not immediately apply to relative homology for manifolds with boundaries (although it should be related to \cite{lf-pseudocycles}). However, we can avoid having to prove this technical underpinning in the following way: replace the manifold-with-boundary $A$ with $A \cup_{N} N \times [0,\epsilon)$, thickening the boundary (here is is important that $A$ is orientable). Then $N$ becomes an embedded smooth submanifold of $M$, and one can for example define pseudocycles as below (their image being contained in $A$, hence precompact).

\subsubsection{Gluing to get a pseudocycle for a submanifold with boundary}
In this situation, we notice the following: we can still obtain a codimension $2$ subset $B \subset (A \setminus F)/S^1$, the removal of which trivialises our $S^1$ bundle. Generically it satisfies $\partial B = B \pitchfork (N/S^1)$. The first part of Assumption \ref{assumption:mfdwbdry-assumption} assumes that $F$ misses $N$. We note first that $\mu^A_i$ is not an embedded closed submanifold, but rather an embedded bordism in $S^{2i+1} \times_{S^1} A$. In particular, it has a boundary $\mu^{\partial A}_i = \mu^N_i$. For later use, we will actually think of $\mu^A_i$ as a pseudocycle bordism in $S^{2i+3} \times_{S^1} A$.

We note the following: $N / S^1 \subset (A \setminus F) / S^1$ is some smooth submanifold. Hence, if we remove $B$ then we obtain $$\left( N / S^1 \right) \setminus B \subset \left((A \setminus F) / S^1 \right) \setminus B \xrightarrow{s} A \setminus (p^{-1}B \cup F).$$ In particular,  $\mu^{s((N / S^1) \setminus B)}_{i+1} = (\text{id} \times s) \circ \mu^{(N /S^1) \setminus \partial B}_{i+1}$ is a pseudocycle in $S^{2i+3} \times_{S^1} A$, and its boundary is $$\partial D_{2i+2} \times s((N/S^1 ) \setminus \partial B) \sim S^1 \times D_{2i} \times s((N/S^1 ) \setminus \partial B) \sim D_{2i} \times N,$$ where we use $\sim$ to denote ``diffeomorphic up to removing a codimension $2$ submanifold". This means that the domains of $\mu^{s(N / S^1 \setminus B}_{i+1}$ and $\mu^{A}_i$ have boundaries that are diffeomorphic, related by an orientation reversing diffeomorphism (recalling as in \eqref{equation:A-pseudocycle-bordism-boundary} that the orientation is swapped because the $S^1$-action is diagonal). Further, it means that (after passing to the complement of some codimension $2$ submanifold) we can construct a continuous map $\mu^{glued}_i$ by gluing together $\mu^{s((N / S^1) \setminus B}_{i+1}$ and $\mu^{A}_i$, such that the domain of this glued map is the smooth manifold obtained by attaching collar neighbourhoods of $\partial D_{2i+2} \times s((N/S^1) \setminus \partial B)$ to $D_{2i} \times \partial A$ via the diffeomorphism: \begin{equation} \label{equation:boundaries-boundaries} \begin{array}{ll} \partial (D_{2i+2} \setminus \partial D_{2i}) \times ((N / S^1) \setminus \partial B)) & \cong  \partial (D_{2i+2} \setminus \partial D_{2i}) \times ((N / S^1) \setminus \partial B) \\& \cong (S^1 \times D_{2i}) \times ((N / S^1) \setminus \partial B) \\ & \cong  D_{2i} \times (S^1 \times \left((N / S^1) \setminus \partial B\right) \\ & \cong D_{2i} \times (N \setminus p^{-1}\partial B)) \\ & \cong D_{2i} \times \partial (A \setminus p^{-1}B).\end{array} \end{equation} We call this glued manifold \begin{equation} \label{equation:boundary-glued-manifold} D_{2i+2} \times s((N/S^1) \setminus \partial B) \# D_{2i} \times A.\end{equation}

Importantly, $$\mu^{glued}_i : D_{2i+2} \times s((N/S^1) \setminus \partial B) \# D_{2i} \times A \rightarrow S^{2i+3} \times_{S^1} A$$ is, in general, not smooth. Indeed, it is non-smooth specifically along the image of $D_{2i} \times N$: we can see this with local coordinates. Suppose near $$\mu^{glued}_i(D_{2i} \times N) \subset \text{Im}(\mu^{glued}_i) \subset \overline{D_{2i+2}} \times A \rightarrow S^{2i+3} \times_{S^1} A$$ we use local coordinates $(x_1,x_2,x_3, x_4, x_5)$ where:
\begin{enumerate}
    \item $x_1 \in \mathbb{R}^{2i}$ parametrises a ball in $D_{2i}$,
    \item $x_2 \in [0,\epsilon)$ parametrises the normal direction of the boundary $D_{2i} \times S^1 \subset \partial D_{2i+2} \subset D_{2i+2}$, 
    \item $x_3$ parametrises some interval of $S^1$,
    \item $x_4 \in (-\epsilon,0]$ parametrises some normal direction of $N/S^1 \subset A / S^1$,
    \item $x_5 \in \mathbb{R}^{2n-2}$ parametrises $s(N / S^1 \setminus B) \subset A$.
\end{enumerate}  

In particular, $\mu^{glued}_i(D_{2i} \times N)$ is obtained when $x_2 = x_4 = 0$.

Then if we were to assume (for contradiction) that $\mu^{glued}_i$ were smooth, approaching $\partial (D_{2i+2} \setminus \partial D_{2i}) \times (N / S^1 \setminus \partial B))$ along $D_{2i+2} \times s(N/S^1 \setminus \partial B)$ we observe that \begin{equation} \label{equation:mu-glued-1} \tfrac{d\mu^{glued}_i}{d x_2} \biggr|_{x_2 = x_4 = 0} \neq 0, \ \tfrac{d \mu^{glued}_i}{d x_2} \biggr|_{x_2 = x_4 = 0} =0.\end{equation} However, approaching $\cong D_{2i} \times \partial (A \setminus p^{-1}B)$ along $D_{2i} \times (A \setminus p^{-1}B)$ we obtain that \begin{equation} \label{equation:mu-glued-2} \tfrac{d\mu^{glued}_i}{d x_2} \biggr|_{x_2 = x_4 = 0} =0, \ \tfrac{d \mu^{glued}_i}{d x_2} \biggr|_{x_2 = x_4 = 0} \neq 0,\end{equation} thus providing our contradiction.

This being the case, we need to smooth our $\mu^{glued}_i$. The only disparity in smoothness arises from  \eqref{equation:mu-glued-1} and \eqref{equation:mu-glued-2}, and we can fix this by applying a smoothing exactly as one would smooth the inclusion of $\{(x,0) : x \ge 0 \} \cup \{ (0,y):  y \ge 0 \} \subset \mathbb{R}^2$ in some small ball around $\{ (0,0) \}$. In particular, there is a smoothing parameter $s=s(x_1,x_3,x_5) \in (0,\delta)$ (i.e. a smooth function of $x_1,x_3,x_5$) corresponding to smoothings of $\mu^{glued}_i$ that are of $C^{\infty}$-distance $s$ from $\mu^{glued}_i$. The only extra consideration is that we want to preserve the omega-limit-set (i.e. ensure it does not increase in dimension). To do this, we observe that the omega-limit-set of $\mu^{glued}_i$ is some union of submanifolds of $S^{2i+3} \times_{S^1} A$. We thus require the smoothing of $\mu^{glued}_i$ to converges to $\mu^{glued}_i$ as one approaches the omega-limit-set. We can do this by taking a sequence of nested punctured neighbourhoods $T_1 \supset T_2 \supset \dots \supset \Omega_{\mu^{glued}_i}$, such that $\bigcap_k T_k = \Omega_{\mu^{glued}_i}$. Using local coordinates $x_{i,k}$ for $i=1,2,3,4,5$ being the restriction of $x_i$ to $T_k$, the condition that we require is: \begin{equation}\text{sup}_{(x_{1,k}, x_{3,k}, x_{5,k})} s(x_{1,k}, x_{3,k}, x_{5,k}) \rightarrow 0 \text{ as } k \rightarrow \infty.\end{equation} Hence, for any $s$ there is an associated glued pseudocycle, and any two such $s, s'$ provide bordant pseudocycles, using the bordism arising from an interpolation $s + \lambda(s'-s)$. In particular, for any such choice of $s$ we denote the associated pseudocycle by $\mu^{A \# N/S^1}_i$.

Depending on whether $\partial B_k = \emptyset$, we may either immediately obtain a pseudocycle $\mu^{p^{-1}B_k}_{i}$ or we may have to glue as with the $\mu^A_i$. By Assumption \ref{assumption:mfdwbdry-assumption} there is a section $\partial B \rightarrow p^{-1}\partial B$, and hence there is a well defined inclusion map associated to this section that defines a pseudocycle (perhaps bordism) $\mu^{\partial B}_{i+1}$. Further, as above the boundary matches that of $\mu^{p^{-1}B}_i$, hence we obtain a pseudocycle bordism $\mu^{p^{-1}B_k}_i \# \mu^{\partial B_k}_{i+1}$ by once again gluing and smoothing. See Remark \ref{remark:assumption-unrealistic} for a discussion of Assumption \ref{assumption:mfdwbdry-assumption} in practise.

If, as previously, we denote by $\epsilon(F_k)$ the sign difference in the orientation of $F_k \setminus F^0_k \subset A$ and $\rho_k \left( (E_{F_k} \setminus E_{F_k}|_{F^0_k}) / S^1 \right)$, where $\rho_k: E_{F_k} \rightarrow F_k$ is the induced projection associated to the blow-down, then we need to make some small but important changes (see the proof of Corollary \ref{corollary:lbp-mfd-w-bdry}) so that everything follows as previously. In particular we obtain that:

\begin{corollary}[LbP with boundary] \label{corollary:lbp-mfd-w-bdry} \begin{equation} \label{equation:lbp2} \sum_{k \ : \ \lambda_k = 2}  \epsilon(F_k) [\mu_i^{F_k}] + \sum_k n_k [\mu_i^{(p^{-1}B_k)^{wc}} \# \mu_{i+1}^{\partial B_k}] - [\mu^A_{i-1} \#_{\mu^N_{i-1}} \mu^{N/S^1}_{i}] = 0.\end{equation}\end{corollary}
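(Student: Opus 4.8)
The plan is to mirror the proof of Theorem \ref{theorem:lbp} almost verbatim, with the single structural change that every pseudocycle in sight is now a pseudocycle \emph{bordism} (because $A$ has a boundary), so one has to track the extra boundary faces that those bordisms contribute and check that they cancel. Concretely, I would again build the master object
$$
s_{B,i}:\ \overline{D_{2i}}\times \mathrm{Bl}_B(\widecheck{W})^{wc}\setminus Z_i \longrightarrow ES^1\times_{S^1}A,
$$
using the extended section $s_B$ constructed above, but now remembering that $\mathrm{Bl}_B(\widecheck{W})$ itself acquires a new boundary stratum lying over $N/S^1$, namely the face $\partial\widecheck{W}\setminus B = (N/S^1)\setminus\partial B$, together with the corner where this face meets $E_B$ along $p^{-1}(\partial B)/S^1 = \partial B$ (here Assumption \ref{assumption:mfdwbdry-assumption} is exactly what makes $\partial B$ sit cleanly inside $N$ as an honest submanifold rather than forcing a further iterated blow-up). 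So the faces of the domain now split into the three families already appearing in Theorem \ref{theorem:lbp}, plus a fourth family coming from $D_{2i}\times\big((N/S^1)\setminus\partial B\big)$, plus the corner contributions.

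First I would run the omega-limit-set computation exactly as before: the four bullet points in the proof of Theorem \ref{theorem:lbp} give pieces of dimension $\le 2i+n-4$, and the new boundary-over-$N$ contributes omega-limit pieces covered by the images of $\partial B$ (codimension $2$ in $N/S^1$) and of $F$ meeting $N$ — but the first half of Assumption \ref{assumption:mfdwbdry-assumption} says $F\cap N=\emptyset$, so that piece is vacuous, and the $\partial B$ piece is again of dimension $\le 2i+n-4$. Hence the glued-and-smoothed map is a genuine pseudocycle bordism of the advertised dimension, once one invokes the gluing-and-smoothing construction of $\mu^{glued}$ developed just above to repair the non-smoothness along $D_{2i}\times N$ (and, for the $B_k$ with $\partial B_k\neq\emptyset$, the analogous gluing producing $\mu^{(p^{-1}B_k)^{wc}}\#\mu^{\partial B_k}_{i+1}$). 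Second, I would identify the boundary pseudocycles term by term, just as in cases (1)–(3) of the earlier proof: the $\widecheck{E}_{F_k}$ faces for $\lambda_k=2$ give $\epsilon(F_k)\mu^{F_k}_i$, where now $\epsilon(F_k)$ is the sign discrepancy described in the paragraph preceding the corollary statement; the $E_{B_k}$ faces give $n_k\,\mu_i^{(p^{-1}B_k)^{wc}}\#\mu^{\partial B_k}_{i+1}$, with $n_k$ the oriented weight of Definition \ref{definition:weight} and with the boundary of this glued bordism being exactly $\mu^{\partial B_k}_i$; and $\partial\overline{D_{2i}}\cong S^1\times D_{2i-2}$ gives $-\mu^A_{i-1}$ by the same diagonal-action orientation argument as in \eqref{equation:A-pseudocycle-bordism-boundary}.

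The genuinely new bookkeeping is the fourth face and the corners, and this is where I expect the main obstacle. The face $D_{2i}\times\big((N/S^1)\setminus\partial B\big)$, pushed through $s_B$, is precisely the pseudocycle $\mu^{N/S^1}_i$ (section over $N/S^1$, then blow-down), and its boundary inside $\partial\mathrm{Bl}_B(\widecheck{W})^{wc}$ is $\partial B$ — so this is the face along which the glued object $\mu^A_{i-1}\#_{\mu^N_{i-1}}\mu^{N/S^1}_i$ appearing in \eqref{equation:lbp2} is assembled, rather than a separate term in the sum. The point to verify carefully is that, after the smoothing, the boundary of the total bordism is genuinely
$$
\sum_{k:\lambda_k=2}\epsilon(F_k)\,\mu^{F_k}_i\ +\ \sum_k n_k\big(\mu_i^{(p^{-1}B_k)^{wc}}\#\mu^{\partial B_k}_{i+1}\big)\ -\ \big(\mu^A_{i-1}\#_{\mu^N_{i-1}}\mu^{N/S^1}_i\big),
$$
i.e.\ that the collar neighbourhoods glued in to form $\#$ on the $\mu^A$-side and the collars glued in to form each $\#$ on the $B_k$-side are compatible along the corners $p^{-1}(\partial B_k)$, so no stray codimension-one face survives. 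This is exactly the content of the ``small but important changes'' alluded to just before the statement: one checks that the identification \eqref{equation:boundaries-boundaries}, applied fibrewise over $\partial B$, matches the $S^1$-weight data used to define $n_k$, so that the $\mu^{\partial B_k}_{i+1}$ glued onto $\mu^{p^{-1}B_k}_i$ cancels against the corresponding corner piece of $\mu^{N/S^1}_i$ with the same coefficient $n_k$. Once that compatibility is in hand, appealing to Zinger's correspondence (as extended to the glued setting above) converts the pseudocycle-bordism identity into the homology identity \eqref{equation:lbp2}, completing the proof.
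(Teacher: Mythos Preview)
Your proposal is correct and follows essentially the same route as the paper: build the same master object $s_{B,i}$, retain the three boundary faces from the closed case, and identify the new face over $N/S^1$ that must be glued to the $\partial\overline{D_{2i}}$-face to produce $\mu^A_{i-1}\#_{\mu^N_{i-1}}\mu^{N/S^1}_i$, while the corner over $\partial B$ feeds into the $E_{B_k}$-face to produce $\mu_i^{(p^{-1}B_k)^{wc}}\#\mu^{\partial B_k}_{i+1}$.

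The one place where the paper is more explicit than your sketch is in the handling of that corner. Rather than phrasing it as a ``cancellation'' to be checked, the paper observes that the new boundary face is literally $D_{2i}\times\mathrm{Bl}_{\partial B}(N/S^1)$, and then (after removing a codimension-$2$ set $B'\subset\partial B$) decomposes this as the disjoint union of $D_{2i}\times N/S^1$ and a \emph{negatively oriented} $D_{2i}\times D^2\times(\partial B\setminus B')\cong D_{2i+2}\times(\partial B\setminus B')$. The first piece glues to the $\partial\overline{D_{2i}}$-face as you describe; the second piece is exactly the $\mu^{\partial B_k}_{i+1}$ that attaches to $D_{2i}\times E_{B_k}$ (via a constant $[0,1]$-bordism using the section of $p^{-1}\partial B\to\partial B$ from Assumption~\ref{assumption:mfdwbdry-assumption}). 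So the $\mu^{\partial B_k}_{i+1}$ is not something whose compatibility one verifies after the fact---it is manufactured directly out of the tubular-neighbourhood piece of the new face, and the weight $n_k$ is inherited simply because this piece is being glued to the $E_{B_k}$-face which already carries that factor. Your description gets to the same endpoint, but making this decomposition explicit removes the need for a separate corner-compatibility check.
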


\begin{proof}
We construct a pseudocycle bordism exctly as in the proof of Theorem \ref{theorem:lbp}. Each of the existing three types of boundary components remain. However, the pseudocycle bordism $s_{B,i}$ of dimension $\dim A -1 + 2i$ from Equation \ref{equation:pseudo-bordism} now has an extra boundary component. This component arises from the boundary of $\text{Bl}_B(\widecheck{W})^{wc}$ induced by $\partial A= N$ quotiented by $S^1$ but then blown-up along $B$. In other words, this part of the boundary is diffeomorphic to $$D_{2i} \times \left(N / S^1\right) \setminus \left(T_{\partial B} N/S^1\right) \cong D_{2i} \times \text{Bl}_{\partial B} (N / S^1),$$ where $T_{\partial B} N/S^1$ is a tubular neighbourhood of $\partial B \subset N/S^1$. 

If one removes some codimension $2$ submanifold $B' \subset \partial B$, then $T_{\partial B \setminus B'} N / S^1 \rightarrow \partial B \setminus B'$ is a trivial $D^2$-bundle, i.e. $T_{\partial B \setminus B'} N / S^1 \cong (D^2 \times \partial B \setminus B')$. In particular, after removing a codimension $2$ submanifold, this extra boundary component (which we establised above is diffeomorphic to $D_{2i} \times \text{Bl}_{\partial B} (N / S^1)$) is diffeomorphic to the union of $$D_{2i} \times N / S^1$$ and a negatively oriented copy of $$D_{2i} \times D^2 \times \partial B \setminus B' \cong D_{2i+2} \times \partial B \setminus B'.$$ 

Further, due to the discussion on Equation \eqref{equation:A-pseudocycle-bordism-boundary}, we observe that this new boundary $D_{2i} \times N / S^1$ (which is positively oriented) when taken in combination with the $S^1 \times D_{2i-2} \times \widecheck{W}$ boundary contribution, yields the smooth manifold denoted $D_{2i+2} \times s((N/S^1) \setminus \partial B) \# D_{2i} \times A$ in \eqref{equation:boundary-glued-manifold}. After gluing and smoothing locally, the restriction of the pseudocycle bordism to $D_{2i+2} \times s((N/S^1) \setminus \partial B) \# D_{2i} \times A$ restricts to $-\mu^A_{i-1} \#_{\mu^N_{i-1}} \mu^{N/S^1}_{i}$. 

The same holds similarly for $D_{2i+2} \times \partial B \setminus B'$ being glued to $D_{2i} \times E_{B_k} \setminus (E_{B_k} \cap \widecheck{E}_{F})$ to obtain that the pseudocycle bordism restricts to $\mu_i^{(p^{-1}B_k)^{wc}} \# \mu_{i+1}^{\partial B_k}$ on this boundary component: in this case, one can glue in a ``constant bordism" $D_{2i+2} \times [0,1] \times \partial B \setminus B'$ (induced by the section of $p^{-1}B \rightarrow B$ for all points in $[0,1]$).

In particular, the boundary components of the pseudocycle bordism are indeed those arising in Equation \eqref{equation:lbp2}.
\end{proof}

\begin{remark}

\label{remark:assumption-unrealistic}

In Assumption \ref{assumption:mfdwbdry-assumption}, we assumed that $p^{-1}(\partial B) \rightarrow \partial B$ is a trivial $S^1$-bundle. This is in general quite a restrictive assumption, but it is enough for the examples that we find in Sections \ref{subsec:statement1} and \ref{subsec:statement2}. Further, it is straightforward to remove this assumption, but the statement of Corollary \ref{corollary:lbp-mfd-w-bdry} will grow more complicated. In essence, if $p^{-1}(\partial B) \rightarrow \partial B$ is not trivial then we must find a codimension $2$ subset of $\partial B$ that we can remove in order to trivialise. This would then be applied iteratively: our assumption implies we do not need to iterate more than once.

\end{remark}

\begin{remark}
In the case we will consider in Section \ref{sec:ourexample}, our space of domains will, after compactification, look like the closed $2$-disc. However, we will demonstrate that contributions from the boundary do not arise, and hence $\mu^A_i$ in that instance is enough to define a moduli space, even though it is not by itself a pseudocycle. For more general symplectic manifolds, one might expect contributions from an ``$S^1$-equivariant cap-product".
\end{remark}

\section{Lifting to moduli spaces}
\label{sec:lifting-to-moduli-spaces}
Note that a lot of the notation that was used in Section \ref{sec:LbP} will be used in a different context henceforth, so as to link to the discussion in \cite{lefschetz3}, in particular $F$, $E$ and $M$.

The problem in question is thus: we would like to apply the results of Section \ref{sec:LbP} directly to some moduli space $$\mathcal{M}_i(\text{data}),$$ where $\mathcal{M}_i(\text{data})$ is some moduli space parametrised by $S^{2i+1}/S^1 \subset BS^1$. These moduli spaces ``look like" strata of a homotopy quotient, but they are not in general: they will depend on a Hamlitonian perturbation and an almost complex structure $H^{S^1},J^{S^1}$ that, due to regularity issues, may not be chosen to be $S^1$-invariant. The best we can do is to require that our moduli spaces fibre over some stratum of the homotopy quotient of a smooth manifold (one can think about this as the case where there is not an $S^1$-action on a space, but rather a ``$S^1$-action up to homotopy".). 

In this section we will demonstrate through Theorem \ref{theorem:lift-to-moduli} that this is sometimes enough: in particular, given some parameter space $A$ such that $\mathcal{M}_i(A) \rightarrow S^{2i+1} \times_{S^1} A$ for each $i$, we can lift LbP from $S^{2i+1} \times_{S^1} A$ to $\mathcal{M}_i(A)$, to obtain a version of $S^1$-localisation for the moduli space itself. The idea is to construct a $1$-dimensional moduli space parametrised by the pseudocycle bordism we used in LbP, Theorem \ref{theorem:lbp}. As a general rule-of-thumb, if one wants to apply this result to moduli spaces, one first writes down the space of underlying domains, then one applies LbP to obtain a pseudocycle bordism, then one defines a $1$-dimensional moduli space parametrised by this bordism.

\subsection{$S^1$-equivariant symplectic cohomology}
\label{subsec:s1-eq-sympl-coh}

Note that $E$, as defined in Section \ref{subsec:sympl-coh}, satisfies the necessary conditions to apply the construction of symplectic cohomology from \cite[Section 11]{lefschetz3}. We will extend this definition of symplectic cohomology (recalled in Section \ref{subsec:sympl-coh}) to an $S^1$-equivariant version, using the Borel construction as in for example \cite{bourgeois-oancea} and \cite{seidelsmith}.

First, we fix as in the nonequivariant case some $H=(H_t)_{t \in S^1}$ and $J=(J_t)_{t \in S^1}$. We then fix some family of Hamiltonians $H^{eq}_{t,v}: E \rightarrow \bR$ satisfying Properties \ref{properties:H} for every $v \in S^{\infty}$, so we obtain some smooth $H^{eq}: S^1 \times E \times S^{\infty} \rightarrow \mathbb{R}$ as a $1$-periodic Hamiltonian parametrised in an $S^1$-equivariant way by the classifying space $S^{\infty} \subset \mathbb{C}^{\infty}$. We require that for any $\theta \in S^1$, $$H^{eq}(\psi,e,v) = H^{eq}(\psi + \theta, e, \theta \cdot v).$$ We sometimes denote $$H^{eq}_v := H^{eq}(-,-,v): S^1 \times E \rightarrow \bR.$$


Further, we choose some domain-dependent almost complex structure $J^{eq}_{v,t}$ on $E$, dependent on the $v \in S^{\infty}$ and $t \in S^1$ such that Property \ref{properties:J} is satisfied for all $v$, and such that for any $\theta \in S^1$, $$J^{eq}_{\theta \cdot v, t - \theta} = J^{eq}_{v,t}.$$

We require that $H^{eq}_{t,\sigma v} = H^{eq}_{t,v}$ and $J^{eq}_{t, \sigma v} = J^{eq}_{t,v}$ where $$\sigma: S^{\infty} \rightarrow S^{\infty}, \ \sigma(z_1,z_2,\dots) = (0,z_1,z_2,\dots).$$

We recall that there is a choice of $S^1$-invariant Morse-Bott function $g: S^{\infty} \rightarrow \bR$, with $$g(z_1,z_2,\dots) = \sum_{k=1}^{\infty} k |z_k|^2,$$ such that there is a sequence of critical $S^1$-orbits $c_1,c_2,\dots$ with $|c_i| = 2i$ for each $i$. For brevity we denote the set of all negative gradient flowlines of $g$ from $c_j$ to $c_k$ to be $\mathcal{F}(j,k)$. Note by the $\sigma$-invariance, that $\mathcal{F}(j,k) \cong \mathcal{F}(j-1,k-1)$ for all $j,k$.

We denote by $\mathcal{P}(H)$ the set of all pairs $(c,\gamma)$ such that $c$ is a critical orbit and $\gamma$ is a $1$-periodic Hamiltonian loop with respect to $H^{eq}_{c(t)}$. As in \cite[section 2.1]{bourgeois-oancea}, these generically come in isolated $S^1$-families. We will denote by $S_{k,\gamma}$ the $S^1$-orbit of $(c_k,\gamma)$.

Define $$SC^{eq}_*(E) := \Pi_{(c_k,\gamma) \in \mathcal{P}(H)} \mathbb{Z} \langle S_{k,\gamma} \rangle,$$ with $d^{eq}: SC^{eq}_*(E) \rightarrow SC^{eq}_{*-1}(E)$ defined such that (for $|x_{-}| - |x_{+}| = 1$) the coefficient of $(c_l,x_{-})$ in $d^{eq}((c_k, x_{+}))$ is the number of (oriented) $\mathbb{R}$-families of pairs $(w,u)$ such that $w : \mathbb{R} \rightarrow S^{\infty}$ is a negative gradient flowline of $g$ from $c_l$ to $c_k$, and a map $u: S^1 \times \mathbb{R} \rightarrow E$ such that $\partial_s u + J^{eq}_{t,w(s)}(u)(\partial_t u - X_{H^{eq}_{t,w(s)}}) = 0$, and asymptotically $\lim_{s \rightarrow \pm \infty} u(t,s) = x_{\pm}(t)$. We denote the space of such objects \begin{equation}
    \label{equation:eq-diff} \mathcal{M}(k,x_{+},l,x_{-}).
\end{equation}

We see that as we requested that our data is invariant under the shift map $\sigma$, then we can associate $$SC^{eq}_*(M) = SC_*(M)[[u]],$$ and $d^{eq} = d^0 + u d^1 + u^2 d^2 + \dots$, where $d^i(u^j \gamma)$ considers pairs as above such that $$\sigma^j w \in \mathcal{F}(i+j,j) \leftrightarrow w \in \mathcal{F}(i,0).$$ We will use this formulation in general.

\begin{remark}[The relationship between Section \ref{subsec:s1-eq-sympl-coh} and Section \ref{sec:LbP}] 
An important point is that in Section \ref{sec:LbP}, there is a different model of equivariant cohomology compared to Section \ref{subsec:s1-eq-sympl-coh}. The difference is broadly the difference between cellular and Morse cohomology. Relating them is a subtle point, and we will be careful, but the intuition is as follows.

We recall that within $S^{\infty}$, there are a choice of discs $D_{2i}$ such that the inclusion of $D_{2i}$ into $S^{\infty}$, then the quotient to $BS^1$, represents the generator of $H_{2i}(BS^1; \mathbb{Z})$. Further, one notices that $D_{2i} \cap c_{i} = \{ 0 \}$ is the centrepoint of the disc, and the intersection $F(i,j) \cap D_{2i}$ is a line segment from $0 \in D_{2i}$ to some point on the boundary.

Consider the usual isomorphism between Morse homology, and cellular homology built from Morse theory, on a manifold: given a Morse function $f$, the map $CM_*(M,f) \rightarrow C_*^{\text{cell}}(M)$ is induced by $c \in \text{crit}(f) \mapsto W^u(c,f)$. More generally, for a general decomposition by smooth cells, the map $\psi: CM_*(M,f) \rightarrow C_*^{\text{cell}}(M)$ is built such that the coefficient of $C \in \psi(c)$ is the number of $\gamma: (-\infty,0] \rightarrow M$ such that $\gamma(0) \in C$. This is likewise how one defines the map for pseudocycles. In order to avoid the technical mess of trying to define $S^1$-equivariant cohomology in this way, we have continued with the previously considered, well-known definitions. Indeed, it would require the construction of a lot of technology, for limited gain (the differentials would have to look like continuation maps, supported in $[-r+\lambda,\lambda+r]$ where $\lambda \in \mathbb{R}$ is permitted to vary).
\end{remark}

\subsection{Lifting LbP to moduli spaces}
\label{subsec:lifting-LbP}

We will now describe the procedure to lift LbP to the realm of moduli spaces, for the specific case as given at the beginning of Section \ref{sec:lifting-to-moduli-spaces}. Note that a similar result should hold in other generalities, as long as care is given to regularity and so forth, but in this paper we are using symplectic cohomology in this specific setting. 

Suppose that $E \subset F$ are symplectic manifolds satisfying conditions as in Section \ref{sec:lifting-to-moduli-spaces}. Suppose that:

\begin{itemize}
    \item we fix some nodal holomorphic curve $R$ consisting of a holomorphic thimble $T$ with a bubble tree attached at $\infty$, with the bubble tree consisting of $l$ components $R_{1},\dots,R_{l}$,
    \item we fix some $m \ge 1$. Let $A$ consist of the compactified set of tuples of distinct marked points $z_1,\dots,z_m$ in the interior of $R$, disjoint from the nodes, up to reparametrisation. For any $a \in A$ there is an underlying nodal Riemann surface $\text{Surf}(a)$ obtained by forgetting the marked points ($\text{Surf}(a)$ will consist of $R$ along with some attached bubble trees from marked points colliding). We ask that $(\text{Surf}(a),a)$ is a stable nodal holomorphic curve, for each $a \in A$. Recall that $A$ is a smooth compact complex manifold of complex dimension $m-l-1$ with corners.
    \item $R$ is equipped with a Chern signature $\alpha_R$: given some nodal holomorphic curve $C$ consisting of a holomorphic thimble with some number of bubble trees attached, the component spheres of the bubble trees indexed by some set $J$, we say that $C$ {\it has Chern signature $\alpha_C$} if for each component $C_j$ of the nodal curve $C$, there is an associated nonnegative integer $\alpha_C(C_j)$. Denote $\lambda = \alpha_C(T) + \sum_{j \in J} \alpha_C(C_j)$, the Chern number of $C$. 
    \item if $C',C$ are equipped with Chern signatures, and there is a collapsing map $$\text{coll}: \{ C_j: j \in J \} \rightarrow \{ C'_i : i \in I \} \sqcup T,$$ then we say that {\it the Chern signature $\alpha_C$ of $C$ is compatible with $\alpha_{C'}$ of $C'$} if $$\alpha_{C'}(C'_i) = \sum_{j \in J : \text{coll}(C_j) = C'_i} \alpha_C(C_j),$$ for all $i \in I$, and $$\alpha_{C'}(T) = \alpha_C(T) + \sum_{j \in J : \text{coll}(C_j) = T} \alpha_C(C_j).$$
    \item there is an induced $S^1$-action on $R$, acting by rotation on $T$ and fixing $R \setminus T$. This satisfies Assumption \ref{assumption:ass1}, and \ref{assumption:mfdwbdry-assumption},
    \item a choice of $\dagger_1,\dots,\dagger_q$ in a neighbourhood of $0 \in \bC P^1$ over which $f: F \rightarrow \bC P^1$ is trivial, and an assignment $\eta: \{ z_1,\dots,z_m \} \rightarrow \{ 0,\dagger_1,\dots,\dagger_q \}$ such that: $\# (\eta^{-1}(0) \cap C_j) = \alpha_R(C_j)$ and if $\# (\eta^{-1}(0) \cap T) \neq 0$ then there is some $\kappa =1,\dots,q$ such that $ \# \eta^{-1}(0) \le \#  \eta^{-1}(\dagger_{\kappa})$. This condition ensures the invariants we describe are closed, \i.e. in Lemma \ref{lemma:moduli-compactification}.
    \item we make some generic choice of data $H,J$ depending on $v \in S^{\infty}, \ a \in A ,\ \text{ and } z \in R$  such that $H,J$ are $S^1$-equivariant in the sense that $H_{v,a,z} = H_{\theta \dot v, \theta \cdot a, \theta \cdot z}$  and similarly for $J$, and they are independent of $v$ when $z$ is near the cylindrical end of $T$, and they are independent of $v$ in some neighbourhood of each marked-point and node.  
    \item for a generic choice of $H,J$ (subject to conditions above) and for some choices of pseudocycle bordism $G$ and pseudocycles $g_1,g_2$ (such that $\partial G = g_1 \sqcup -g_2$) in $S^{2i+1} \times_{S^1} A$, and for $y \in \mathcal{P}(H)$, if $$\dim(G) -|y| - 2\lambda - \sum_{k=1}^m (n-\text{dim}(f_k)) = 1,$$ then one can define smooth moduli spaces (perhaps with boundary) $$\mathcal{M}_{f}(y,\lambda), \ \text{ for } f = G,g_1,g_2$$ of dimension $\dim(f) -|y| - 2 \lambda - \sum_{k=1}^m (n-\text{dim}(f_k))$ consisting of triples $(a,w,u)$ such that:
\begin{enumerate}
    \item $w \in \mathcal{F}(i,0)$,
    \item $a \in A$,
    \item $[w(0),a] \in \text{Im}(f)$,
    \item $u : \text{Surf}(a) \rightarrow F$,
    \item For $s,t \in [1,\infty) \times S^1$ on the positive cylindrical end of $\text{Surf}(a)$, $$\lim_{s \rightarrow \infty} u(s,t) = y(t).$$
    \item for each $z \in \text{Surf}(a)$, $$\begin{cases}\begin{array}{l} du|_z \circ j|_z = J_{a,z}|_{u(z)} \circ (du|_z - X_{H_{a,z}}) \text{ for } z \text{ on } R_k \text{ for any } k, \\ du|_z \circ j|_z = J_{w(-\text{log}|z|),a,z}|_{u(z)} \circ (du|_z - X_{H_{w(-\text{log}|z|),a,z}}) \text{ for } z \neq \infty \text{ on } T \cong \left( \bC \cup \{ \infty \} \right) \setminus \{ 0 \}  \end{array}\end{cases}$$ 
    \item the induced Chern signature $$\alpha_{\text{Surf}(a)}(\text{Surf}(a)_{j}) := u|_{\text{Surf}(a)_{j}} \cdot M$$ for each component of $\text{Surf}(a)$ is compatible with $\alpha_R$ under the natural collapse map,
    \item $u(z_k) \in M_{\eta(z_k)}$ for $k=1,\dots,m$.
\end{enumerate}

\end{itemize}

\begin{lemma}
\label{lemma:moduli-compactification}
When $\dim(f) -|y| - 2\lambda - \sum_{k=1}^m (n-\text{dim}(f_k)) = 0$ the moduli space $\mathcal{M}_{f}(y,\lambda)$ is compact. When $\dim(f) -|y| - 2\lambda - \sum_{k=1}^m (n-\text{dim}(f_k))= 1$ the moduli space $\mathcal{M}_{f}(y,j)$ can be compactified by adding broken configurations such that (when $w$ is a parametrised Morse flowline) there is breaking of $w$ at the positive end, concurrent perhaps with breaking of the positive cylindrical end of the domains in $A$.
\end{lemma}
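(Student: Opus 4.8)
The plan is to treat Lemma \ref{lemma:moduli-compactification} as a standard Gromov-type compactness statement for the class of parametrised, Hamiltonian-perturbed curves defined in the list above, and to isolate exactly which degenerations can occur given the dimension count. First I would fix the case $\dim(f)-|y|-2\lambda-\sum_k(n-\dim(f_k))=0$. Here one takes a sequence $(a_n,w_n,u_n)\in\mathcal M_f(y,\lambda)$ and extracts limits of each factor separately. Since $A$ is compact (it is a compactified space of marked domains, complex dimension $m-l-1$, with corners, as stated), the sequence $a_n$ subconverges to some $a_\infty\in A$; the underlying nodal surface $\mathrm{Surf}(a_n)$ can only degenerate in ways already accounted for by the compactification of $A$ (marked points colliding, creating bubble trees). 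The flowline $w_n\in\mathcal F(i,0)$ lives in a finite-dimensional space of negative gradient trajectories of the Morse--Bott function $g$; by the standard Morse--Bott compactness it subconverges to a broken flowline. The map component $u_n$ is where Gromov compactness enters: a priori energy bounds (coming from the action filtration built into Properties~\ref{properties:H}, the tameness of $J$, and the fixed Chern number $\lambda$) give subconvergence to a stable map $u_\infty$, possibly with sphere bubbles and with breaking of Floer cylinders at the positive puncture. The key point is that each such degeneration drops the expected dimension by at least one (sphere bubbles cost $2c_1\ge 2$ by nonnegative monotonicity, a Floer break costs the index of the broken piece which is $\ge1$ since all orbits are nondegenerate, and a flowline break costs $\ge1$); since the stratum has dimension $0$, no degeneration is possible, so the limit is again an element of $\mathcal M_f(y,\lambda)$ and the moduli space is compact.

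For the one-dimensional case, the same compactness argument applies but now exactly one degeneration of codimension one is permitted. I would enumerate these: (i) a Floer break of $u$ at the positive cylindrical end, producing a broken configuration with an intermediate orbit $y'$, $\mathcal M_f(y',\lambda)\times\mathcal M^{\mathrm{Floer}}(y',y)$; (ii) a break of the parametrised Morse flowline $w$ at its positive end, in $\mathcal F(\cdot,0)$; (iii) degeneration of the domain $a_n$ to a boundary-or-corner point of $A$ --- but such configurations are precisely the ``boundary'' of the manifold-with-boundary $\mathcal M_f(y,\lambda)$ and, for $f$ a \emph{pseudocycle bordism} $G$, also include the contributions from $\partial G=g_1\sqcup-g_2$; (iv) sphere bubbling, which is ruled out as codimension $\ge2$ by nonnegative monotonicity (here one also uses that $F$ is nonnegatively monotone, as recalled after Properties~\ref{properties:J}, so no negative-Chern bubbles occur, and a positive-Chern bubble costs $2c_1\ge2$). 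The claim that the stated compactification --- adding only the broken-$w$/broken-cylinder configurations --- suffices then amounts to verifying that (iii) and (iv) are genuinely absent or already built into the manifold-with-boundary structure. For (iv) one must also check that marked-point collisions carrying tangency constraints $u(z_k)\in M_{\eta(z_k)}$ do not create new codimension-one strata; this is where the condition $\#\eta^{-1}(0)\le\#\eta^{-1}(\dagger_\kappa)$ and the compatibility of Chern signatures are used, exactly as flagged in the bullet before the lemma.

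The arguments of types (i) and (ii) are standard SFT/Floer-type gluing-and-compactness and I would only sketch them, citing \cite{lefschetz3} for the relevant a~priori estimates (the winding condition in Properties~\ref{properties:H} ensures all curves stay in the region where $p$ is holomorphic, giving the needed $C^0$-control and confinement away from $M$) and \cite{bourgeois-oancea} for the Morse--Bott bookkeeping of the $S^1$-families of orbits. The substantive content, and the main obstacle, is type (iii)--(iv): proving that no \emph{domain-side} bubbling or corner degeneration contributes in codimension one beyond what is recorded by the manifold-with-corners structure of $A$ and the pseudocycle-bordism data. Concretely, when two marked points collide a ghost bubble forms; one must show the resulting configuration either is already parametrised by a corner of $A$ (hence part of $\partial\mathcal M_f$) or is excluded by stability together with the tangency/Chern-signature constraints. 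The cleanest way to organise this is to observe that the Chern-signature compatibility condition (item~(7) in the moduli definition) is preserved under collapsing, so any bubbled-off ghost component carries Chern number $0$ and, being a ghost, is destabilised unless it supports enough marked points --- and the assignment $\eta$ together with $\#(\eta^{-1}(0)\cap C_j)=\alpha_R(C_j)$ forces those marked points to be distributed in a way that this never happens in codimension one. I expect this verification, rather than the Floer-theoretic compactness, to be where the care is required.
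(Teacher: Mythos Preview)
Your overall framework (Gromov compactness plus dimension counting) is correct, and you correctly handle sphere bubbling via nonnegative monotonicity and the concurrent Floer/Morse breaking as the allowed codimension-one phenomenon. However, there are two specific places where your argument diverges from the paper's and leaves genuine gaps.

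First, you do not treat the omega-limit set of the pseudocycle (bordism) $f$. The parameter $[w(0),a]$ lies in $\text{Im}(f)\subset S^{2i+1}\times_{S^1}A$, and a limit can land in (a) the honest boundary $\partial G=g_1\sqcup -g_2$, (b) the omega-limit set $\Omega_f$, or (c) the interior. You address (a) and (c), but the paper deals with (b) explicitly: since $\Omega_f$ is covered by maps $h_i:X_i\to S^{2i+1}\times_{S^1}A$ with $\dim X_i\le\dim G-2$, the moduli spaces $\mathcal M_{h_i}(y,\lambda)$ have virtual dimension $\le -1$ and are empty for generic data. This is precisely the step where the pseudocycle formalism earns its keep, and it is missing from your account.

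Second, and more substantively, you misidentify the role of the condition $\#\eta^{-1}(0)\le\#\eta^{-1}(\dagger_\kappa)$. You invoke it for marked-point \emph{collisions} and ghost bubbles, but collisions are already absorbed into the compactified parameter space $A$ by construction. In the paper the condition is used to rule out a marked point $z_i$ \emph{escaping down the cylindrical end of $T$}, which is what ``$m_n$ converges to $\partial A$'' actually means here. In codimension one exactly one $z_i$ escapes, producing a broken configuration with a Floer cylinder now carrying the constraint $u(z_i)\in M_{\eta(z_i)}$. If $\eta(z_i)=\dagger_k\neq 0$, that cylinder meets $M_{\dagger_k}$ and hence (since $\dagger_k$ lies in the trivialising neighbourhood of $0$) also meets $M$, pushing the total intersection with $M$ above $\lambda$. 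If $\eta(z_i)=0$, one reverses the argument using the $\dagger_\kappa$ points remaining on $T$ to conclude that $T$ already has Chern number exceeding $\alpha_R(T)$. Either way the configuration violates the Chern-signature constraint and is excluded. These escape configurations are genuinely codimension one and must be argued away; they are not ``already built into the manifold-with-boundary structure'' as you suggest, and your ghost-bubble discussion does not substitute for this argument.
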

\begin{proof}
Suppose first that the moduli space is of dimension $1$ (i.e. $f=G$). Consider what points need to be added in order to compactify. Given a sequence of points in the moduli space, $[w_n,m_n, u_n]$, if $[w_n(0),m_n]$ is convergent to a boundary point of $G$ such that $m_n$ is away from the boundary of $A$ (which is a point in the moduli space), or $m_n$ converges to a boundary point of $A$, or the omega-limit set. However, by definition the omega limit set is covered by maps $h_i : X_i \rightarrow S^{2i+1} \times_{S^1} A$, where $\dim h_i \le \dim G - 2$. In particular, this implies that any additional compactifying elements can be mapped to one of $\mathcal{M}_{h_i}(y,j)$, but these manifolds are of dimension $\dim(h_i) -|y| - 2\lambda \le \dim(G) -|y| - 2j - 2 \le -1$, hence are empty. 

Further, $m_n$ may not converge to a boundary point of $A$. To see this, suppose first for a contradiction that this were so: in that case, this would imply that exactly one $z_i$ converges to the boundary of $T$ (not two, because this occurs in codimension $2$). If $\eta(i) \neq 0$ then, as with proving that $s_{eq}^1$ is well-defined, we can deduce that such a limiting curve has Chern number $> \lambda$. If $\eta(i) = 0$, then working in the other direction (i.e. the intersection with $M$ is the intersection with $M_{\dagger_\kappa}$ for the $\kappa$ as in our assumptions above) and deduce similarly that the curve $T \subset R$ has Chern number $> \alpha_R(T)$. In either case, there was a contradiction.

The next case is when $[w_n(0),m_n]$ is a convergent subsequence in the open stratum of $G$. Then it is standard $S^1$-equivariant gluing and compactness arguments that tell us we need to a broken configuration as in the statement of the lemma (this carries over as in the case of gluing parametrised holomorphic cylinders, see \cite{bourgeoisoanceafredholm} for the proof of generic regularity). Breaking of $w_n$ at the negative end cannot occur because this would be a codimension $2$ phenomenon (on account of the critical $S^1$-levels of our Morse-Bott function on $ES^1$ existing only in even degrees).

Finally, we need to rule out bubbling. Observe that, in the case as given, the fact that each of $F$, $M$ and $E$ is nonnegative monotone (i.e. there are no holomorphic spheres of negative Chern number) means that standard results as in for example \cite{jholssympl} rules out bubbling (i.e. bubbling occurs in codimension $2$).

If the moduli space is of dimension $0$ (i.e. $f=g_1,g_2$), then as all of the additional cases occur in codimension $1$, for generic choices of data there are no such extra marked points, and hence the moduli space is already compact.
\end{proof}

Suppose first that $A$ is a manifold without boundary, and that the assumptions above hold with $$g_1 = \mu^A_i : D_{2i} \times A \rightarrow S^{2i+1} \times_{S^1} A.$$ Then there is a natural object $$\Psi(\mu^A_i) \in SC^*_{S^1}(E),$$ defined such that the coefficient of $y u^i$ (when the index $|y| -\dim(2 \mu^A_i) +2i= 0$) in $\Psi(\mu^A_i)$ is the number of triples $(w,a,u)$, which are the points in the $0$-dimensional moduli space $\mathcal{M}_{\mu^A_i}(y,j)$ such that $a \in A$, $w \in \mathcal{P}(i,0)$, and $u: m \rightarrow M$ at the point $z \in m$ satisfies the conditions. In the case where $A$ has boundary $N$ with the given assumption, this is replaced by $\mu^A_i \# \mu^{N/S^1}_{i+1}$. Finally, by considering the possible breakings of a $1$-dimensional version of the defining moduli spaces $\mathcal{M}_{\mu^A_i}(y,j)$, one see that $\sum_i \Psi(\mu^A_i)$ is closed, hence $\sum_i \Psi(\mu^A_i) \in SH^*_{S^1}(E)$. 

In particular, possible endpoints of a $1$-dimensional version of the moduli space $\mathcal{M}_{\mu^A_i}(y,j)$ occur when $w$ breaks at its negative end (i.e. $w(0)$ converges to $\partial D_{2i}$), or $w$ and the positive end break in concert. The former can be shown to occur in codimension $2$, hence generically does not occur in $1$-dimensional moduli spaces, and so we obtain the equation: $$ \sum_{k=0}^i d^{i-k} \Psi(\mu^A_{k}) = 0.$$ In particular, when summing over all of the $\Psi(\mu^A_k)$, the resulting object is closed under $d^{eq} = \sum_i d^i$.

Using LbP, recall that given the pseudocycle $\mu_i^A$ (replace $\mu_i^A$ with $\mu^A_i \# \mu^{N/S^1}_{i+1}$ throughout if $A$ has a boundary) associated to $A$, there are codimension $2$ submanifolds $F_k$ and $p^{-1}B_k$ such that $$[\mu_i^A] = \sum_{k : \text{codim}(F_k) = 2} \epsilon(F) [\mu_{i+1}^F] + \sum_k n_k[\mu_{i+1}^{p^{-1}B_k}]$$  attained using some pseudocycle bordism $\nu_i : D_{2i} \times X \rightarrow S^{\infty} \times_{S^1} A$.  We will denote by $C = \bigsqcup_{k : \text{codim}(F_k) = 2} \epsilon(F_k) F_k \bigsqcup_k n_k p^{-1}B$ the union of manifolds, and $\mu^C_i$ to be the pseudocycle $$\sum_{k : \text{codim}(F_k) = 2} \epsilon(F) [\mu_{i+1}^F] + \sum_k n_k[\mu_{i+1}^{p^{-1}B_k} : C \rightarrow A.$$Then suppose that the above assumptions on moduli spaces hold for $$g_2 = \mu_{i+1}^C = \sum_{k : \text{codim}(F_k) = 2} \epsilon(F_k) \mu_{i+1}^{F_k} + \sqcup_k \mu_{i+1}^{p^{-1}B_k}.$$  We can define $\mathcal{M}_{\mu^{C}_i}(y,j)$ consisting of triples $(m,w,u)$ such that $m \in C$, and thus by considering $0$-dimensional moduli spaces as in the previous paragraph (where we have to take the union of the various summands of $\mu_i^C$), we can define an element $\Psi(\mu^C_i) \in SC^*_{S^1}(E)$. As with the previous case, after summing these we obtain $\sum_i \Psi(\mu^C_i) \in SH^*_{S^1}(E)$.
 
Finally, for completeness we observe that for the cycle $\sum_i \Psi(\mu^A_i)$, defined on the chain-level, we see that the chain $u \Psi(\mu^A_i)$ has the coefficient of $y u^k$ in $u \Psi(\mu^A_i)$ is obtained by counting the number of points in $\mathcal{M}_{\mu^A_{i-1}}(y,k-1)$.

\begin{theorem}
\label{theorem:lift-to-moduli}
If the assumptions at the beginning of Section \ref{subsec:lifting-LbP} hold for $G=\nu_i$, $g_1 = \mu^A_i$ and $g_2 = \mu^C_i$ then:

$$\sum_i \Psi(\mu^C_i) = \sum_i u \Psi(\mu^A_i) \in SH^*_{S^1}(E).$$
\end{theorem}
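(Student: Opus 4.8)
The plan is to lift the pseudocycle bordism $\nu_i$ furnished by Theorem~\ref{theorem:lbp} to a $1$-dimensional family of holomorphic curves and read off the desired identity from the count of its boundary. Fix $i$ and $y\in\mathcal{P}(H)$ for which $\dim\mathcal{M}_{\nu_i}(y,\lambda)=1$, and pass to the compactification $\overline{\mathcal{M}_{\nu_i}(y,\lambda)}$. Since $\nu_i$ is a genuine pseudocycle bordism, its omega-limit set is covered by maps from manifolds of dimension $\le\dim(\nu_i)-2$, so the corresponding moduli spaces have dimension $\le -1$ and are empty; together with Lemma~\ref{lemma:moduli-compactification} this makes $\overline{\mathcal{M}_{\nu_i}(y,\lambda)}$ a compact $1$-manifold with boundary, whose signed boundary count therefore vanishes.

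Next I would list the boundary strata. \emph{Ends of the bordism:} over $\partial\nu_i$ sit the points of the $0$-dimensional moduli spaces $\mathcal{M}_{\mu_i^A}(y,\lambda)$ and $\mathcal{M}_{\mu_i^C}(y,\lambda)$, which contribute, with opposite signs, the relevant coefficients of $\Psi(\mu_i^A)$ and $\Psi(\mu_i^C)$, carrying the signs $\epsilon(F_k)$ and weights $n_k$ inherited from Theorem~\ref{theorem:lbp}; when $A$ has boundary one uses here the glued-and-smoothed pseudocycles $\mu_i^A\#\mu_{i+1}^{N/S^1}$ and $\mu_i^{p^{-1}B_k}\#\mu_{i+1}^{\partial B_k}$ of Section~\ref{subsec:lbp-mfdwbdry}, and it is the fact that $\mu_i^C$ is assembled out of the equivariant level $i{+}1$ pieces of the $F_k$ and $p^{-1}B_k$ that accounts for the factor $u$ in the statement. \emph{Interior breaking:} by Lemma~\ref{lemma:moduli-compactification} the only interior degeneration is breaking of the parametrising Morse flowline $w$ at its positive end, possibly concurrent with breaking of the positive cylindrical end of the domain in $A$. \emph{Excluded strata:} breaking of $w$ at its negative end ($w(0)\to\partial\overline{D_{2i}}$) is codimension $2$ since the critical $S^1$-orbits of the Morse--Bott function $g$ on $ES^1$ all have even index; and degenerations at $\partial A$, collisions of marked points, a marked point running into $\partial T$, and sphere bubbling are all codimension $\ge 2$ by nonnegative monotonicity and the condition on $\eta$, exactly as in the proof of Lemma~\ref{lemma:moduli-compactification}. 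Hence none of these occurs in a $1$-dimensional moduli space.

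Assembling the vanishing boundary counts over all $i$ and all relevant $(y,\lambda)$ then expresses the difference of the ``end'' contributions as the $d^{eq}$-image of the chain $\Theta\in SC^*_{S^1}(E)$ obtained by counting the one-dimensional moduli spaces $\mathcal{M}_{\nu_i}$: concretely, $\sum_i\Psi(\mu_i^C)-\sum_i u\,\Psi(\mu_i^A)=d^{eq}\Theta$, using that both $\sum_i\Psi(\mu_i^A)$ and $\sum_i\Psi(\mu_i^C)$ are already $d^{eq}$-closed (established just before the theorem). Passing to cohomology yields $\sum_i[\Psi(\mu_i^C)]=u\sum_i[\Psi(\mu_i^A)]$ in $SH^*_{S^1}(E)$, which is the claim.

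I expect the main obstacle to be making the ``Ends of the bordism'' step precise: one must check that the ends and corners of $\mathcal{M}_{\nu_i}$ --- recall $\nu_i$ is a manifold with corners from which the codimension-$2$ set $Z_i$ was removed, and that its ends involve the smoothing loci of glued pseudocycles such as $\mu_i^A\#\mu_{i+1}^{N/S^1}$ --- produce \emph{precisely} the listed terms, with the correct orientations, and nothing extra. This is exactly what the control imposed on the choices in the hypotheses is for: equivariance and $v$-independence of $(H,J)$ near the cylindrical ends, near the marked points and near the nodes, the constraint on $\eta$, and nonnegative monotonicity. With these in hand, every potentially problematic limiting configuration either lies in codimension $\ge 2$ or forces a limiting curve of Chern number strictly greater than $\lambda$ --- contradicting the index count --- so the compactness argument of Lemma~\ref{lemma:moduli-compactification} applies verbatim; the transversality and $S^1$-equivariant gluing needed in the parametrised setting follow \cite{bourgeoisoanceafredholm}, and the exclusion of bubbling follows \cite{jholssympl}.
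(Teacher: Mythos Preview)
Your overall strategy is correct and matches the paper: lift the LbP bordism $\nu_i$ to a one-dimensional moduli space $\mathcal{M}_{\nu_i}(y,\lambda)$ and read off the relation from the vanishing signed count of its boundary. However, there is a genuine error in your identification of the boundary strata and in the mechanism producing the factor $u$.

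The crucial point is that the $\partial\overline{D_{2i}}$ face of the bordism domain is \emph{not} an excluded codimension-$2$ stratum. In the LbP bordism the domain is (after removing codimension-$2$ corners) $\overline{D_{2i}}\times\text{Bl}_B(\widecheck{W})^{wc}$; the face $D_{2i}\times\partial(\text{Bl}_B(\widecheck{W})^{wc})$ yields the $\mu^C_i$ contribution, while the face $\partial\overline{D_{2i}}\times(\ldots)$ yields precisely $-\mu^A_{i-1}$. Lifted to moduli, the latter is exactly the phenomenon $w(0)\to\partial\overline{D_{2i}}$ --- which you have listed among the excluded strata. In the paper's proof this is interpreted as a \emph{single} Morse breaking of $w$ at the negative end, $w=w'\#w_p$ with $w'\in\mathcal{F}(k,k-1)$ and $w_p\in\mathcal{F}(k-1,0)$; applying the $S^1$-action, such a configuration bijects with a point of $\mathcal{M}_{\mu^A_{i-1}}(y,k-1)$, and \emph{this} is where the factor $u$ comes from (since counting $\mathcal{M}_{\mu^A_{i-1}}(y,k-1)$ gives the coefficient of $y\,u^k$ in $u\,\Psi(\mu^A_i)$). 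Deeper negative-end breakings are indeed excluded for index reasons, but the single-step one is codimension $1$ here, because $\nu_i$ uses the \emph{closed} disc $\overline{D_{2i}}$, in contrast to $\mu^A_i$ itself, which uses the open disc. You have conflated this with the earlier argument (used to show $\sum_i\Psi(\mu^A_i)$ is closed) that for $\mathcal{M}_{\mu^A_i}$ negative-end breaking is codimension $2$.

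Consequently your attribution of the $u$ to the $\mu^C$ side (``$\mu_i^C$ is assembled out of level $i{+}1$ pieces'') is backwards: the $u$ appears on the $\mu^A$ side because that boundary sits one equivariant level below the bordism. With this corrected, the boundaries of $\mathcal{M}_{\nu_i}$ are $\mathcal{M}_{\mu^C_i}$ (from the $a$-direction), $\mathcal{M}_{\mu^A_{i-1}}$ (from $w(0)\to\partial\overline{D_{2i}}$), and positive-end breaking giving the $d^{eq}$ terms --- exactly as in the paper.
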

\begin{proof}
One considers the $1$-dimensional moduli space $\mathcal{M}_{\nu}(y,j)$ (necessarily the dimension will be $1$, assuming that $\mathcal{M}_{\mu^C_i}(y,j)$ and $\mathcal{M}_{\mu^A_{i-1}}(y,j)$ are $0$ dimensional) consisting of triples $(w,a,u)$ such that $(w(0),a) \in \text{im}(\nu_i)$ (among the other conditions).

When we compactify this $1$-dimensional moduli space, we now consider what are the boundaries. By construction, there are boundaries associated to the boundary of $\text{im}(\nu_i)$, which corresponds (taking the codimension $1$ limit points of the $m$ coordinate) to $\mathcal{M}_{\mu^C_i}(y,j)$, which we see by LbP, or corresponds (taking the codimension $1$ limit points of $w(0)$, i.e. when $w(0)$ converges to an element of the boundary $\partial(D_{2i}) = S^1 D_{2i-2}$) to a single Morse breaking of $w$ at the negative end of the Morse flowline. In this case, $w$ breaks as $w' \# w_p$ (where by this we mean $w_p(-\infty) = w'(\infty)$), where $w_p$ is the principal component in $\mathcal{F}(k-1,0)$, whereby in this boundary triples $(m,w_p,u)$ satisfy the conditions of triples of $\mathcal{M}_{\mu^A_{i-1}}(y,k-1)$, and $w'$ is in $\mathcal{F}(k,k-1)$. By the same considerations as in LbP, applying the $S^1$-action this exactly bijects with a pair consisting of a point in the moduli space $\mathcal{M}_{\mu_{i-1}^A}(y,k-1)$. Note that the other potential breakings of $w$ on its negative end are impossible because such broken configurations cannot exist for index reasons. 

Further, there are boundaries consisting of fixing some domain $m$ and some $w(0)$ contained in the interior of $D_{2i}$, and then the positive cylindrical end of $m$ breaks at the same time as the positive end of $w$. In this case, $w$ breaks as $w_p \# w'$ where $w' \in \mathcal{F}(l,0)$ and $w_p \in \mathcal{F}(k,l)$, and the cylindrical end breaks. Elements of this boundary thus correspond to a pair consisting of an element of $\mathcal{M}_{\nu_i}(y,l)$ and an element in the moduli space defining $d^{k-l}$ (using $\sigma$-invariance of the data). 

Putting all this together, we obtain a boundary between the chains $\sum_i \Psi(\mu^C_i)$ and $\sum_i u \Psi(\mu^A_i)$.
\end{proof}

\begin{remark}
    
If our space of domains $A$ consisted for example of holomorphic curves with an additional negative cylindrical end, then we could do everything above almost identically. The only difference would be taking into account possible breaking of the negative cylindrical ends of the domains at the same time as the Morse flowlines $w$. The objects $\Psi(\mu^C_i)$ and $\Psi(\mu^A_{i-1})$ above would in that instance be operations $SH^*_{S^1}(E) \rightarrow SH^*_{S^1}(E)$ (as opposed to just elements of $SH^*_{S^1}(E)$), and the proof of Theorem \ref{theorem:lift-to-moduli} modified as we have noted will demonstrate a chain homotopy between the chain level descriptions of these operations $\sum_i \Psi(\mu^C_i)$ and $\sum_i \Psi(\mu^A_{i-1})$, as opposed to finding a boundary between two cochains. 

\end{remark}

\begin{remark}
We note that in the proof of Lemma \ref{theorem:lift-to-moduli}, we no longer take $S^1$-equivalence classes because requiring $w(0) \in D_{2i}$ we break the symmetry.
\end{remark}

\begin{remark}
It should be clear that none of the results of this section depend on the actual choice of symplectic manifold $E$. The narrow set of choices that we made merely allow us to control technical issues such as bubbling, regularity, compactification etc. In particular, there should be extensions of these results to other, more general moduli space constructions. One thing to note is that the lifting procedure might be more complicated in general, relying on the normal bundle of $F$ in $A$ in a nontrivial way. 
\end{remark}

\section{Application to Borman-Sheridan classes}
\label{sec:ourexample}

\subsection{The (equivariant) Borman-Sheridan class, and the equivariant PSS-map}
\label{subsec:operations}

\subsubsection{The Borman-Sheridan class}

To begin, we define the thimble surface $T$ to be $(\mathbb{R} \times S^1) \cup \{ \infty \}$, which is biholomorphic via $z \mapsto 1/z$ to $\mathbb{C}$ (albeit parametrised by a cylinder away from $\infty$). In particular, the action of $S^1$ on $T$ is \begin{equation} \label{equation:rotation} S^1 \times T \rightarrow T, \quad (e^{2 \pi i \theta}, t) \mapsto e^{-2 \pi i \theta} t,\end{equation} for $\theta \in [0,1]$. See Figure \ref{fig:thimble}(A). We choose on this thimble a family of Hamiltonians and almost complex structures $(H^{S^1, \text{thimble}},J^{S^1, \text{thimble}})$ depending on $(v,z) \in ES^1 \times T$ such that each $H^{S^1,\text{thimble}}_{v,z}: E \rightarrow \bR$ is a Hamiltonian and each $J^{S^1,\text{thimble}}_{v,z}$ is an almost complex structure on $F$. We require that when $z=(s,t) \in \bR \times S^1 =  T \setminus \{ \infty \}$, $$\begin{array}{l} H^{S^1, \text{thimble}}_{v,s,t} = H^{S^1, \text{thimble}}_{\theta \cdot v, s, t - \theta} : M \rightarrow \mathbb{R}. \\ J^{S^1, \text{thimble}}_{v,s,t} = J^{S^1, \text{thimble}}_{\theta \cdot v, s, t - \theta} \end{array}$$ for any $\theta \in S^1$, and so that Properties \ref{properties:H} and \ref{properties:J} hold for all $s,v$. We ask that $(H^{S^1, \text{thimble}},J^{S^1, \text{thimble}}) = (H^{eq},J^{eq})$ is independent of $s$ for $s \le 1$, and independent of $v,s,t$ for $s \ge 2$.


We will define $s_{eq} \in SH^{0}_{S^1}(E)$ as follows: for each $k \in \mathbb{Z}_{\ge 0}$ and $\gamma \in \mathcal{P}(H)$, let $$\tilde{\mathcal{M}}(k, \gamma,H^{S^1, \text{thimble}},J^{S^1, \text{thimble}})$$ consist of triples $(\tau,v,u)$ such that \begin{equation}\label{eq:seqprops} \begin{cases}\begin{array}{l} \tau \in T, \\ w \in \mathcal{F}(k,0) \\ w(-\infty) \in c_k, \ w(\infty) \in c_0 \\ u : T \rightarrow F \\ \partial_s u + J^{\text{thimble}}_{w(s),s,t}(u)(\partial_t u - X^{\text{thimble}}_{w(s),s,t}) = 0 \\ u(\zeta) \in M \\ \lim_{s \rightarrow -\infty} u(s,t) = \gamma(t), \\ u \cdot [M] = 1, \\ u(\tau) \in M_{\dagger}. \end{array}\end{cases}.\end{equation} Further, this comes equipped with a free, properly discontinuous $S^1$-action, and we define $$\mathcal{M}(k,\gamma,H^{S^1,\text{thimble}},J^{S^1,\text{thimble}}) = \tilde{\mathcal{M}}(k, \gamma,H^{S^1,\text{thimble}},J^{S^1,\text{thimble}})/S^1.$$ This is a manifold of dimension $|\gamma| + 2k$. 

We must now demonstrate that, when the moduli space is of dimension $0$, it is compact. Further, if the moduli space is of dimension $1$ then it can be compactified by adding setups consisting of concurrent breaking of the negative cylindrical end of $T$, along with breaking on the negative cylindrical end of $w$. Ideally, we would like to demonstrate generically any compactifying curves must sit in codimension at least $2$, as is standard. We proceed as in \cite{lefschetz3}. There are three important types of compactification that we have to consider: 
\begin{itemize}
    \item when $\tau = (s,t)$ in the cylindrical end of $T$, and $s \rightarrow \infty$. Notice however that such a broken setup consists of an equivariant (perturbed) holomorphic thimble attached to an equivariant Floer trajectory at their cylindrical ends, such that the equivariant trajectory intersects $M_{\dagger}$. Such solutions cannot be excluded for index reasons, because they are of codimension $1$. So in order to make $s_{eq}$ well-defined, one needs to rule out such a situation. 
    
    However, to do this we observe that all of the curves in the open part of the moduli space have $u \cdot [M] = 1$, hence this holds for broken configurations. But we notice that, in the broken configuration, the equivariant Floer trajectory has an intersection with $M_{\dagger}$. By the choice of $\dagger$ (in a trivialising neighbourhood for $F \rightarrow \bC P^1$ of $0$), this means that this trajectory will also generically have an intersection with $M$. But now the broken trajectory has at least $2$ intersections with $M$, which is a contradiction. This eliminates such boundary points from occuring in the compactification.

\item breaking on the positive end of $w$ cannot occur for moduli spaces of dimension $0,1$, as this must occur in codimension $2$ (as all critical values are in even degree).
    
    \item we now need to demonstrate that bubbling cannot occur. Because $F$ is nonnegatively monotone, we can use usual methods to rule out bubbling everywhere except when $\tau = \infty \in T$. Note in particular that when $\tau =  \infty$, the resulting stable sphere that bubbles off from $T$ from the collision of $\tau$ with the marked point at $\infty$ cannot be contained in $M$, because this would imply that $M_{\dagger}$ and $M$ intersect. 
    
    The only other case is when a bubble occurs entirely within $M$ at $\infty \in T$ (or an unstable bubble at the equivalent point in the nodal configuration when $\tau = \infty$). However, because we have assumed that $M$ is nonnegatively monotone, this ensures that such situations occur in codimension $2$ (in particular, all such bubbled spheres must have non-negative Chern number). 
\end{itemize}

Now that we know this, we can define $$s_{eq} = \sum_{\begin{array}{c} (c_k,\gamma) \in SC^*_{eq}(E), k \in \mathbb{Z}_{\ge 0}, \\ |\gamma| + 2k = 0 \end{array}} \# \mathcal{M}(k, \gamma,H^{S^1,\text{thimble}},J^{S^1,\text{thimble}}) \cdot \gamma u^k.$$ One can also demonstrate that this is closed under $d^{eq}$, by considering a $1$-dimensional version of the moduli space $\mathcal{M}(k,\gamma,H^{S^1,\text{thimble}},J^{S^1,\text{thimble}})$. The degree arguments and bubbling arguments still prevent respectively limiting curves when $\tau$ travels along the cylindrical end of $T$, and sphere bubbling. However, there can be limiting curves consisting of a breaking of $T$ along the cylindrical end, along with a concurrent breaking of $w$ at the negative end (breaking of $w$ at the positive is discounted because the resulting limited configuration would be a product of two moduli spaces, one of negative virtual dimension). This in turn provides us with the following boundary for the $1$-dimensional moduli space $\mathcal{M}(l,\delta,H^{S^1,\text{thimble}},J^{S^1,\text{thimble}})$ (recalling the notation \eqref{equation:eq-diff} for the moduli space defining the equivariant differential):
$$\bigsqcup_{j,\gamma} \mathcal{M}(k,\gamma,H^{S^1,\text{thimble}},J^{S^1,\text{thimble}}) \times \mathcal{M}(k,\gamma,l,\delta).$$

In turn, by counting the points in this boundary one obtains that $d^{eq} ( s_{eq}) = 0$. Further, from standard equivariant continuation arguments we can similarly see that this is independent of our choices.





\subsubsection{The equivariant PSS-map}

We define the $S^1$-equivariant PSS-map. The version we use will equate cohomology with locally finite homology via Poincar\'e duality, and thus the notion of pseudocycles is as in \cite{lf-pseudocycles}. 

We pick some generic pseudocycle $f_y: Y \rightarrow E$, representing $y \in H^*(E)$. Given also a Hamiltonian loop $x$, define $\tilde{\mathcal{M}}^{\text{PSS}}(j, y, k, \gamma,H^{S^1,\text{thimble}},J^{S^1,\text{thimble}})$, which consists of triples $(\tau,v,u)$ satisfying:

\begin{equation}\label{eq:seqprops3} \begin{cases}\begin{array}{l} w \in \mathcal{F}(k,j), \\ u : T \rightarrow E \\ \partial_s u + J^{\text{thimble}}_{w(s),s,t}(u)*(\partial_t u - X^{\text{thimble}}_{w(s),s,t}) = 0 \\ u(\zeta) \in f_y(Y), \\ \lim_{s \rightarrow -\infty} u(s,t) = \gamma(t), \\ \text{deg}(u) = 1. \end{array}\end{cases}.\end{equation}

Then the coefficient of $\gamma u^k$ in $\text{PSS}_{eq}(y u^j)$ is $\# \tilde{\mathcal{M}}^{\text{PSS}}(j, y, k,\gamma,H^{S^1,\text{thimble}},J^{S^1,\text{thimble}}) / S^1$.

All of the methods from the previous section (breaking, bubbling etc) demonstrate that this yields a well-defined map $$\text{PSS}_{eq}: H^*(E) \otimes H^*(BS^1) \rightarrow SH^*_{S^1}(E).$$

\begin{remark}
One should compare this with the definition of the PSS map using Morse cohomology, by replacing the intersection of $u(\infty)$ and $f_y$ with intersection of $u(\infty)$ and the stable manifold of some critical point of the Morse function. Constructing things that way, one would be forming the locally finite Morse pseudocycle by ``gluing together" stable manifolds of critical points (as in \cite{schwarzmorsesingiso}) and recover the definition above.
\end{remark}
 


\subsection{Statement 1}
\label{subsec:statement1}

We would like to apply the result of Section \ref{sec:lifting-to-moduli-spaces} in a simple example. 

Before the theorem, we denote the following pseudocycle: $\mathcal{M}$ consists of degree $1$, $J$-holomorphic maps $u: S^2 \rightarrow F$ with $3$ distinct marked points, $z_1=\infty,z_2=1,z_3=0$ such that $u(z_1) \in M$ and $u(z_2) \in M_{\dagger}$ and $z_3=0$ is unconstrained. Then there is an evaluation map $$z^{(1)}: \mathcal{M} \rightarrow F, \quad z^{(1)}(u) = u(z_3),$$ and our condition $u \cdot M = 1$ implies that $u(z_3) \notin M$ (as $u(z_1) \in M$) and so the evaluation map defines a pseudocycle $z^{(1)}|_E : \mathcal{M} \rightarrow E$. Standard Gromov compactification arguments imply that this is pre-compact and a pseudocycle.

We will prove the following theorem, a conjecture \cite[Conjecture 3.3]{lefschetz3} by Seidel:

\begin{theorem}
\label{theorem:ourexample}
In $SH^2_{eq}(E)$, 

\begin{equation}
\label{equation:result1}
    u s_{eq} = PSS_{eq}(z^{(1)}|_E),
\end{equation}


\end{theorem}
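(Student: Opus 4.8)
The plan is to obtain \eqref{equation:result1} as a single application of Theorem~\ref{theorem:lift-to-moduli}, with parameter space $A$ the space of domains underlying the moduli spaces $\tilde{\mathcal M}(k,\gamma,H^{S^1,\mathrm{thimble}},J^{S^1,\mathrm{thimble}})$ of \eqref{eq:seqprops} that define $s_{eq}$. Thus $R=T$ is the thimble of \eqref{equation:rotation}, carrying the moving marked point $\tau$ (with $\eta(\tau)=\dagger$) and the fixed point $\zeta=\infty$ (with $\eta(\zeta)=0$), and Chern signature $\alpha_R(T)=1$. As noted in Section~\ref{subsec:lbp-mfdwbdry}, after compactification $A$ is the closed $2$-disc: the rotation \eqref{equation:rotation} induces an $S^1$-action whose unique fixed point is the apex $C:=\{\tau=\infty\}$ (of codimension $2$), while $\partial A=N$ is the Floer-breaking circle on which $S^1$ acts freely. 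The quotient $(A\setminus C)/S^1$ is an interval, so the trivialising locus $B$ of Section~\ref{subsec:lbp-closedmfd} may be taken empty and $\partial B=\emptyset$; Assumption~\ref{assumption:mfdwbdry-assumption} holds and Corollary~\ref{corollary:lbp-mfd-w-bdry}, i.e.\ \eqref{equation:lbp2}, collapses to $\epsilon(C)\,[\mu^C_i]=[\mu^A_{i-1}\#_{\mu^N_{i-1}}\mu^{N/S^1}_i]$.

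Next I would verify the hypotheses at the start of Section~\ref{subsec:lifting-LbP} for the data $(H^{S^1,\mathrm{thimble}},J^{S^1,\mathrm{thimble}})$, with $g_1=\mu^A_i$, $g_2=\mu^C_i$, and $G=\nu_i$ the pseudocycle bordism produced from $A$ by LbP. The required compactness and regularity are exactly Lemma~\ref{lemma:moduli-compactification}: nonnegative monotonicity of $F$, $M$ and $E$ excludes sphere bubbling outside codimension $2$, and the placement of $\dagger$ in a trivialising neighbourhood of $0$ excludes the $\eta$-violating breakings — the same arguments used to show $s_{eq}$ is a cycle. It then remains to observe that the boundary contribution $\mu^{N/S^1}_i$ carries no moduli: a configuration parametrised near $N$ has $\tau$ driven down the negative cylindrical end of $T$, so a limiting broken cylinder meets $M_\dagger$, hence meets $M$ as well (the fibres being homologous), producing a curve of total intersection number $\ge 2$ with $M$ and contradicting $u\cdot[M]=1$. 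Thus $\mathcal M_{\mu^{N/S^1}_i}(y,1)=\emptyset$, the glued moduli space $\mathcal M_{\mu^A_{i-1}\#\mu^{N/S^1}_i}(y,1)$ counts the same as $\mathcal M_{\mu^A_{i-1}}(y,1)$, and for this $A$ the chain $\mu^A_i$ alone suffices, as foreseen in Section~\ref{subsec:lbp-mfdwbdry}.

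Then I would identify the two sides of Theorem~\ref{theorem:lift-to-moduli}. By construction $\mathcal M_{\mu^A_i}(y,1)$ is the moduli space of \eqref{eq:seqprops} with the equivariant datum normalised so that $w(0)$ lies on $D_{2i}$; breaking the residual $S^1$-symmetry this way, its signed count equals $\#\mathcal M(i,\gamma,\dots)$, so $\sum_i\Psi(\mu^A_i)=s_{eq}$ and $\sum_i u\,\Psi(\mu^A_i)=u\,s_{eq}$. On the other side, a point of $\mathcal M_{\mu^C_i}(y,1)$ has $\tau=\infty$, i.e.\ $\tau$ collides with $\zeta$ and bubbles off a sphere $S$ carrying $\zeta$, $\tau$ and the node; a constant or reducible sphere cannot meet the disjoint fibres $M$ and $M_\dagger$, so the Chern signature splits with degree $1$ on $S$ and degree $0$ on the residual thimble. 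Since $S\cdot[M]=1$ with $\zeta\in S$ mapped into $M$, the node-value of $S$ lies in $E$; hence $S$ with its three marked points $(\zeta,\tau,\mathrm{node})$ is a point of $\mathcal M$ (identifying $z_1=\zeta$, $z_2=\tau$, $z_3=\mathrm{node}$), and its node-evaluation is $z^{(1)}|_E$. The remaining datum — a degree-$0$ equivariant thimble flowing to $\gamma$, parametrised by $w$ with $w(0)\in D_{2i}$ and with apex constrained to $z^{(1)}|_E(\mathcal M)$ — is precisely the moduli space of \eqref{eq:seqprops3} computing $\mathrm{PSS}_{eq}(z^{(1)}|_E)$. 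Thus $\sum_i\Psi(\mu^C_i)=\epsilon(C)\,\mathrm{PSS}_{eq}(z^{(1)}|_E)$, and a check of the orientation conventions fixed above gives $\epsilon(C)=+1$. Feeding $g_1,g_2,G$ into Theorem~\ref{theorem:lift-to-moduli} then yields $u\,s_{eq}=\mathrm{PSS}_{eq}(z^{(1)}|_E)$ in $SH^*_{S^1}(E)$, and the index count ($z^{(1)}|_E\in H^2(E)$, $s_{eq}\in SH^0_{eq}(E)$) places the identity in $SH^2_{eq}(E)$.

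The main obstacle is the last identification: matching $\mathcal M_{\mu^C_i}$ with the moduli space computing $\mathrm{PSS}_{eq}(z^{(1)}|_E)$ forces a careful analysis of the bubbling at $\tau=\infty$ — that the degrees split as $1$ on the sphere and $0$ on the thimble, that the node-evaluation is forced into $E$, and that the gluing and orientation data assemble into $\epsilon(C)=+1$ — alongside the verification that the disc's boundary $N$ genuinely contributes nothing, which is what allows Theorem~\ref{theorem:lift-to-moduli} to be applied. Everything else is index bookkeeping and a direct appeal to Lemma~\ref{lemma:moduli-compactification} and Theorem~\ref{theorem:lift-to-moduli}.
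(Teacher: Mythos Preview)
Your proposal is correct and follows essentially the same route as the paper: identify $A$ with the closed disc, apply LbP/Corollary~\ref{corollary:lbp-mfd-w-bdry} with $B=\emptyset$ and sole fixed point $\{\infty\}$, rule out boundary contributions by the degree-jump argument already used to define $s_{eq}$, and then feed the result into Theorem~\ref{theorem:lift-to-moduli}. The one step the paper makes explicit and you leave implicit is that, to literally identify the bubbled-sphere component with a point of $\mathcal{M}$ (and hence its node-evaluation with the pseudocycle $z^{(1)}|_E$), one first homotopes the auxiliary data on the bubble so that the Hamiltonian perturbation vanishes and the almost complex structure agrees with the fixed $J$ of Properties~\ref{properties:J}; this costs only an exact term in $SC^*_{eq}(E)$ and is exactly the ``careful analysis of the bubbling at $\tau=\infty$'' you flag as the main obstacle.
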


\begin{figure}
    \centering
    \includegraphics[scale=0.7]{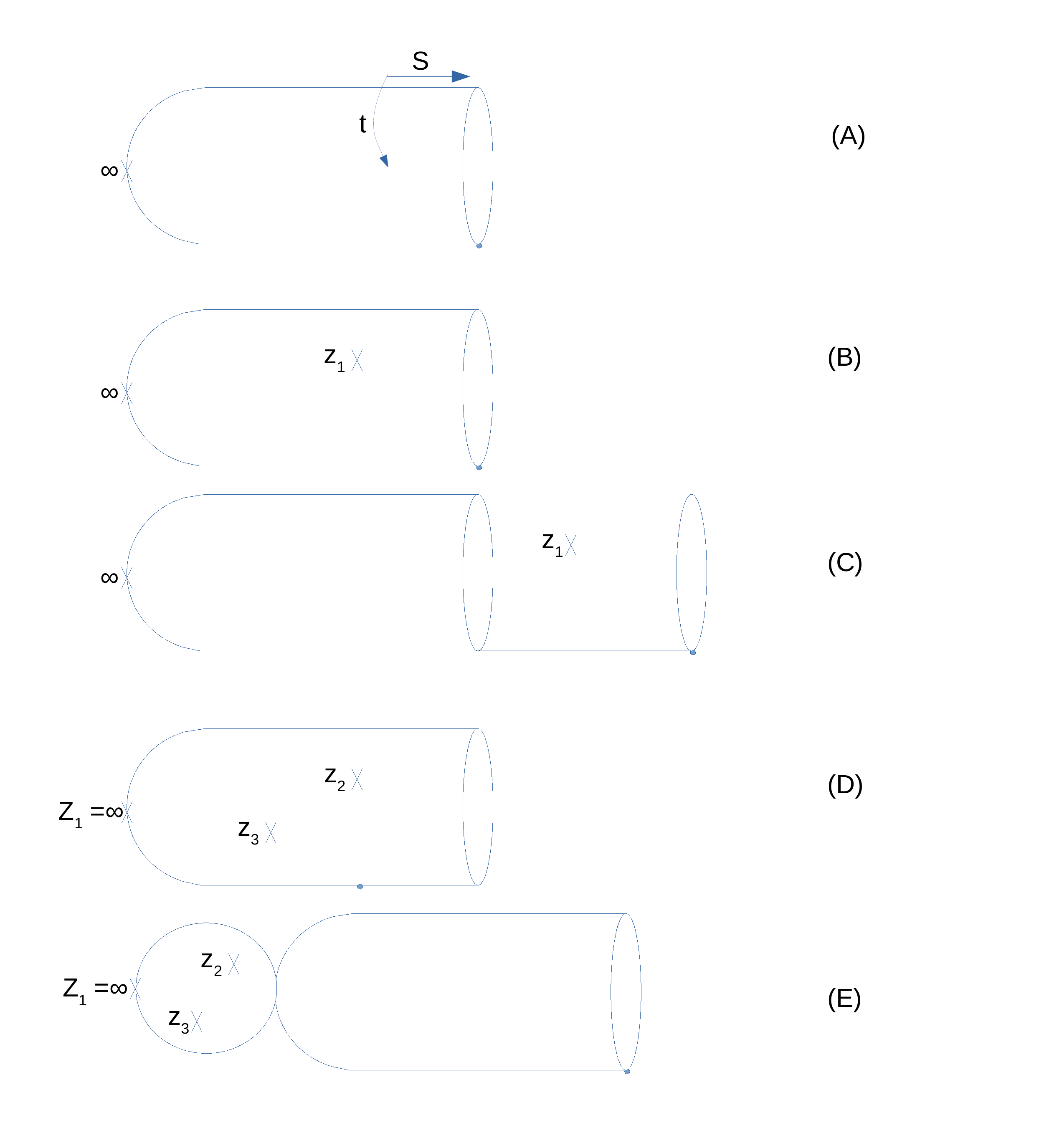}
    \caption{(A) is the holomorphic thimble. (B) is the space of domains as used in Section \ref{subsec:statement1}. (C) is the broken domains when the marked point reaches the boundary. (D) is the space of domains used in Section \ref{subsec:statement2}. (E) is the exceptional divisor of $Bl_{(\infty,infty)}(T^2)$.}
    \label{fig:thimble}
\end{figure}

Before proceeding to the proof of \eqref{equation:result1}, we will consider how this relates to the setup of Section \ref{subsec:lifting-LbP}. In particular, in this case our manifold of domains $A$ will consist of the holomorphic thimble $T$ with $l=0$ bubbles attached, with $m=2$, $\lambda=1$, and $\eta(z_1) = 0$, $\eta(z_2) = \dagger$. In particular, the parameter space consists of a single freely moving marked point $z_2 \in T$, additionally to a single marked point with position fixed at $z_1 = \infty \in T$. See Figure \ref{fig:thimble}(B). Then $A \cong T$ (a fixed interior point and an asymptotic point of $T$ fix the parametrisation). One can compactify $A$ to a closed $2$-disc by adding in the boundary situation. In this case, such a setup would correspond to the marked points $z_2$ travelling to the cylindrical end, and therefore we would naturally obtain a Floer breaking of the cylindrical end of the thimble. The $t$-coordinate of $z_2$ thus would determine the limiting setup of such an element of the compactification, and therefore one obtains a closed $2$-disc as the compactified manifold-with-boundary. Note however that this choice of compactification will be broadly irrelevant, as the same argument we used to show that the cycle $s_{eq} \in SH^*_{eq}(E)$ is well-defined, will also determine that the projection of our parametrised $1$-dimensional moduli spaces to $\overline{A}$ may never reach the boundary. See Figure \ref{fig:thimble}(C) for a point in this compactification. Finally, we set our Chern signature to be $\lambda=1=\alpha(T)$.

We now need to demonstrate that $A$ and $F$ satisfy the assumptions at the beginning of Section \ref{subsec:lifting-LbP}: in particular, that we can define smooth moduli spaces of the correct dimension over them. We do not need to check the moduli spaces for the pseudocycle bordism between, because if necessary we can always perturb this bordism relative to its boundary. Standard bubbling and compactification arguments prevent any bubbling from occurring when defining $\Psi(\mu^A_i)$. For $\Psi(\mu^F_i)$, note that the nonnegative monotone condition and the degree $1$ condition immediately prevent all but the bubbling of multiply covered spheres, and in fact any multiply covered spheres must be entirely in a fibre (i.e. they are of degree $0$). Any simple sphere bubbling entirely contained in the fibre only occurs in codimension $2$, because we chose our fibres to be nonnegatively monotone (by replacing a multiply covered sphere bubble by the underlying simple sphere bubble).



\begin{proof}[Proof of \eqref{equation:result1}]
The space of domains in our setting is (after compactification) isomorphic to $\overline{D^2}$, the closed $2$-disc, corresponding to the thimble with fixed parametrisation, with a single marked point. In keeping with our previous definitions, we will parametrise $\overline{D^2}$ such that it is $\bC \cup \{ \infty \}$ with the unit disc around $0$ removed. The $S^1$-action is rotation. 


It remains to assess what LbP and Theorem \ref{theorem:lift-to-moduli} tells us about the resulting operations.  The fixed-point-set of $D^2 \cong \bC \cup \{ \infty \} \setminus \{ 0 \}$ with $S^1$ being the rotation \eqref{equation:rotation} is $\infty$, which is of codimension $2$. Further, $\overline{D^2} \setminus \{ \infty \} \rightarrow [1,\infty)$ is a trivial $S^1$-bundle. Therefore, we can apply the first case of LbP in the simplest situation (where $B = \emptyset$, and by obvious convention $[\mu_i^{p^{-1}\emptyset}] = 0$). In this case, $$[\mu^{\overline{D^2}}_{i-1} \# \mu^{\partial \overline{D^2}/S^1}_{i}] = [\mu^{\infty}_{i}]:$$ to obtain the sign $\epsilon(\{ \infty \}) = 1$, notice that our choice of orientation for $\widecheck{W}$ requires that the image of the section of $\text{Bl}_{\infty}T \rightarrow (0,\infty]$ be a radial line oriented facing towards $\infty$. In particular, the boundary at $\infty$ in $\text{Bl}_{\infty}T / S^1$ is a positively oriented point in this $1$-dimensional moduli space. Looking at the respective moduli spaces as constructed in Theorem \ref{theorem:lift-to-moduli}, we observe exactly that (because no element of the resulting moduli space may sit above some neighbourhood of $\partial \overline{D^2}$, due to the arguments used to define $s_{eq}$ in Section \ref{subsec:operations}) the operation as in Theorem \ref{theorem:lift-to-moduli} defined by $[\mu^{\overline{D^2}}_{i-1} \# \mu^{\partial \overline{D^2}/S^1}_{i-1}]$ is the coefficient of $y u^k$ in $u s_{eq}$. 

It remains to describe $[\mu^{\infty}_{i}]$. The moduli space we construct, denoted $\mathcal{M}_{\mu^{\infty}_i}(y,1)$ in the language of Section \ref{sec:lifting-to-moduli-spaces}, uses as its space of domains $F$, which consists of a perturbed holomorphic sphere attached to a holomorphic thimble at a single point. The perturbed image of the holomorphic sphere lies in $F$, passes through $M \subset F$, and passes through the submanifold $M_{\dagger}$. The holomorphic thimble meets this perturbed holomorphic sphere at $\infty$, and it is asymptotic at its cylindrical end to $y$. By choosing a homotopy of the induced almost complex structure and Hamiltonian perturbation on the bubbled sphere, we may assume (up to an exact element of $SC^*_{eq}(E)$) that we have no Hamiltonian perturbation on the bubbled sphere, and the almost complex structure is the structure on $F$ satisfying Properties \ref{properties:J}. After this homotopy of the data, we see that the evaluation map at the nodal point of the sphere is the pseudocycle $z^{(1)}|_E$. Finally, the number of maps on the holomorphic thimble with the given intersection condition at $\infty$ is the coefficient of $y u^k$ in $PSS_{eq}(z^{(1)}|_E)$.
\end{proof}


\begin{remark}
Here it is also worth pointing the reader towards the intertwining relation proved by Todd Liebenschutz-Jones in \cite{todd}. One can consider this as a very similar result to that of he cited paper: indeed, if one were to apply the lifting method of LbP to the situation in \cite{todd}, one would obtain exactly the same proof of the intertwining relation. The only real difference between this result and that of \cite{todd}, is the difference between applying LbP to $\overline{D^2}$ versus $S^2$.
\end{remark}

\subsection{Statement 2}
\label{subsec:statement2}
We now demonstrate how to pass to the next level, i.e. consider degree $2$ curves. 

In this instance, we will apply lifted LbP in multiple iterations, noting that we can only do so because our fixed-point-set is of codimension $2$ (see Remark \ref{remark:iteration}). The first space of domains $A$ will consist of: the Riemann surface $T$ with four interior marked points (i.e. $m=4$). There is an obvious $S^1$-action induced by the diagonal, and up to reparametrisation (one of the marked points, say the first, may be fixed at $\infty \in T$) this manifold-with-corners is of dimension $6$, where the open stratum is $(\bC \cup \{ \infty \} \setminus \{ 0 \})^3$, with the compactification being obtained by adding the exceptional divisors of iterated blowups of $T^3$. To compactify, we allow any number of the marked points may reach the boundary (this will correspond in the space of holomorphic maps to breaking of Floer trajectories on the cylindrical end, with the resulting marked point or marked points being at the $s=0$-circle of the resulting cylinder). The intersection conditions we make at the marked points are $M,M,M_{\dagger},M_{\dagger}$, where $\dagger$ is as defined earlier: some point in a neighbourhood $0 \in B(\epsilon,0)$ above which $p^{-1}B(\epsilon,0) \cong M \times B(\epsilon,0)$. We will use coordinates $(z_1,z_2,z_3,z_4)$ for the four marked points in the open stratum, but recall that on the open stratum $z_1 = \infty$ is fixed. Then $S^1$ acts by diagonal multiplication by $e^{-2 \pi i \theta}$. See Figure \ref{fig:thimble2}(A).

We should reiterate that this is not a smooth manifold with boundary: there are corners. All of the previous work could be extended for locally finite pseudocycles as in \cite{lf-pseudocycles}, but this is presently unnecessary using the following trick: within $A$ there is a sequence of smooth manifolds with boundaries, $A_1 \subset A_2,\dots$, fixed by the $S^1$-action, which are obtained by taking $A$ and removing a small (i.e. the volume of $A \setminus A_n$ converges to $0$ as $n \rightarrow \infty$) neighbourhood of the corners. As with the proof that our definition of $s_{eq}$ was well-defined, one can demonstrate that our moduli space of consideration, which counts degree $2$ holomorphic maps, is parametrised by $A_N$ for some $N$ sufficiently large: else, one would be able to find a limiting holomorphic map of degree $>2$. Hence without loss of generality we replace $A = A_N$ for some sufficiently large $N$.

\begin{figure}
    \centering
    \includegraphics[scale=0.5]{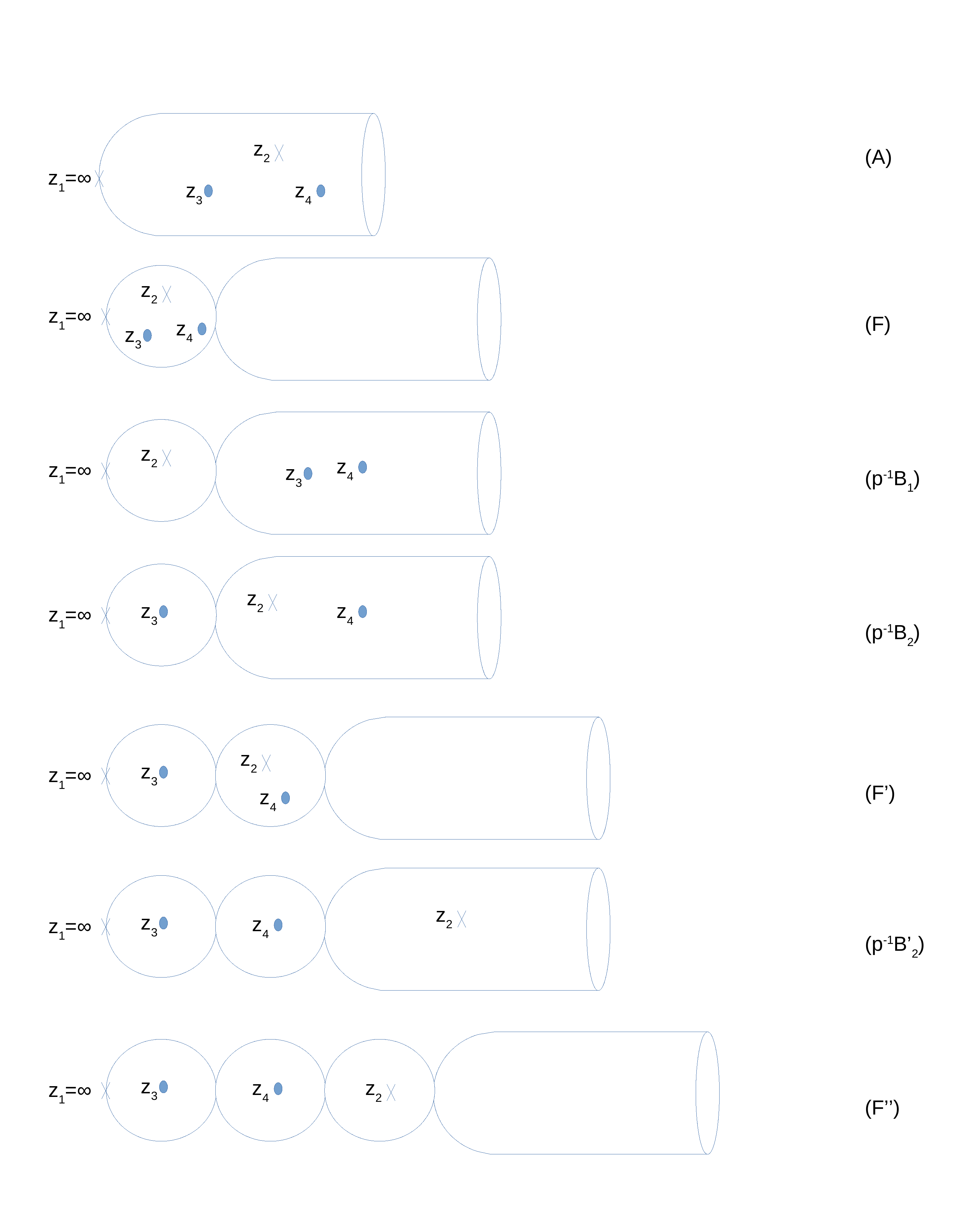}
    \caption{Various configurations of points in $A$. Alternatively a $\times$ has been used to denote a marked point the image of which will intersect $M$, and a filled circle denotes a marked point the image of which will intersect $M_{\dagger}$ (or $M_{\dagger_i}$ after homotopy).}
    \label{fig:thimble2}
\end{figure}

We observe that, beginning with the $S^1$-manifold $A$, there are two distinct choices for applying LbP. For ease of notation, we will replace $\mu^X_{i-1} := \mu^X_{i-1} \# \mu_i^{\partial X / S^1}$, this being unambiguous on the level of pseudocycles. We can obtain:

\begin{equation}
\label{equation:1st-degree2}
    [\mu^A_{i-1} ] = [\mu^F_i] + [\mu^{p^{-1} B_1}_i]
\end{equation}
\begin{equation}
\label{equation:2nd-degree2}
    [\mu^A_{i-1} ] = [\mu^F_i] + [\mu^{p^{-1} B_2}_i]
\end{equation}
 where $F$ consists of the configuration where all of the marked points have bubbled off together at $\infty \in T$, and $p^{-1}B_1$ consists of the configuration where $z_2$ collides with $z_1$, and $p^{-1} B_2$ consists of the configuration where $z_3$ collides with $z_1$. See Figure \ref{fig:thimble2}($F$),($p^{-1}B_1$),($p^{-1}B_2$). In order to see that this is the case, we first recall that the $S^1$-action (as in \eqref{equation:rotation}) is induced by $$(e^{2 \pi i \theta}, (x,y)) \mapsto (e^{-2 \pi i \theta} x, e^{-2 \pi i \theta} y)$$ for $\theta \in [0,1]$, which satisfies all of the Assumptions \ref{assumption:ass1}, \ref{assumption:mfdwbdry-assumption} (the latter of which we will see later). 
 
 As noted above, the fixed point set $F$ is obtained in the case where all of the marked points reach $\infty$ (and thus bubble off). We now consider $A \setminus F$. Removing $p^{-1}B_1$ is sufficient to trivialise the associated $S^1$-bundle, because outside of $p^{-1}B_1$, the point $z_2$ must sit on the principle component, $T$, of the bubble configuration. Therefore, on the complement of $p^{-1}B_1$, the value $\text{arg}(z_2) \in S^1$ is well defined. Hence, there is a natural section of the $S^1$-bundle $$T \setminus \{ \infty \} \times  T \times T \rightarrow (T \setminus \{ \infty \} \times  T \times T)/S^1,$$ obtained by $[z_2,z_3,z_4] \mapsto (|z_2|,z_3e^{- i \text{arg}(z_2)},z_4e^{- i \text{arg}(z_2)})$, which extends to a section of $$A \setminus F \setminus p^{-1}B_1 \rightarrow (A \setminus F \setminus p^{-1}B_1)/S^1.$$ The same argument holds for $B_2$, swapping the roles of $z_2$ and $z_3$. We demonstrate in Appendix \ref{sec:orientations} that the signs are correct.
 
  Equations \eqref{equation:1st-degree2} and \eqref{equation:2nd-degree2} together inform us that $[\mu^{p^{-1} B_1}_i] = [\mu^{p^{-1} B_2}_i]$. Next we restrict attention to $p^{-1}B_2$. We recall that this corresponds to all points in $A$ such that $z_1$ and $z_3$ have bubbled off together at $\infty$ (and $z_2,z_4$ are freely moving). We can now iteratively apply LbP to $[\mu^{p^{-1} B_2}_i]$, in the same way obtaining the following relation:

\begin{equation}
\label{equation:3rd-degree2}
    [\mu^{p^{-1} B_2}_i] = [\mu^{F'}_{i+1}] + [\mu^{p^{-1} B'_2}_{i+1}].
\end{equation}

In this case where the fixed-point set $F'$ consists of a sphere $L$ containing $z_1=\infty,z_3=1$ and a sphere $R$ containing $z_2=1, \ z_4$ freely moving, such that $L,R$ are attached respectively at $0,\infty$, and $R$ is attached to a copy of $T$ respectively at $0,\infty$. We choose $B'_2$ such that $z_4$ converges to $\infty$, thus $p^{-1}B'_2 \subset A$ consists of two spheres in a chain connected to $T$ (i.e. a nodal holomorphic curve as in $F'$) with the marked points $z_1,z_3$ on the first sphere, $z_4$ on the second, and $z_2$ varying freely across $T$. See Figure \ref{fig:thimble2}($F'$),($p^{-1}B'_2$).

Finally, we observe that we may apply LbP as in Section \ref{subsec:statement1} to $p^{-1}B'_2$ to observe that:

\begin{equation}
\label{equation:4th-degree2}
    [\mu^{p^{-1} B'_2}_i] = [\mu^{F''}_{i+1}],
\end{equation}

where the fixed-point set $F''$ consists of spheres $L,C,R$ connected in a chain, with $R$ connected to a copy of $T$, such that $z_1,z_3$ are on $L$, $z_4$ is on $C$ and $z_2$ is on $R$. See Figure \ref{fig:thimble2}($F''$).

Now, by putting together all of \eqref{equation:1st-degree2},\eqref{equation:2nd-degree2},\eqref{equation:3rd-degree2},\eqref{equation:4th-degree2}, we obtain:

\begin{equation}
\label{equation:final-degree2}
    [\mu^{p^{-1} B_1}_i] = [\mu^{F'}_{i+1}] + [\mu^{F''}_{i+2}].
\end{equation}

Now it remains to lift LbP and interpret the resulting operations, i.e. we need to interpret the results of Section \ref{subsec:lifting-LbP} in this context. We have already described $A$, the choices of marked points, and the intersecting pseudocycles. We note that, in order to reasonably interpret our moduli spaces, we will need to make appropriate choices of $H,J$ as given. We pick the degree condition $\lambda=2$. Demonstrating that our given pseudocycles $f$ induce smooth moduli spaces is as in Section \ref{subsec:statement1}.

Interpreting the terms of Equation \eqref{equation:final-degree2}, we first note that when considering the moduli spaces used to define $\Psi ( \mu_i^{p^{-1} B_1})$, we may first choose a homotopy of the auxiliary data. Suppose we simultaneously choose homotopies between:

\begin{itemize}
    \item the induced almost complex structure on the bubbled sphere and the fixed almost complex structure from Setup \ref{properties:J},
    \item the Hamiltonian perturbation on the bubbled sphere, and $0$.
    \end{itemize}
    
    Because we may choose these homotopies relative to the cylindrical end of $T$, by considering the endpoints of the $1$-dimensional moduli space induced by this homotopy of the data, we see that the resulting closed elements of $SC^*_{eq}(E)$ associated to these two choices of data will differ by an exact element of $SC^*_{eq}(E)$. After passing to this new choice of data, the only possible topological situation that could occur when using the pseudocycle $\mu_i^{p^{-1} B_1}$ is when the holomorphic sphere containing $z_1,z_2$ is constant: if this were not so, the resulting map on the nodal curve would necessarily have Chern number $>2$. This is because the degree of the thimble $T$ is at least $1$ (as it intersects e.g. $M_{\dagger}$), hence if the sphere containing $z_1,z_2$ were nonconstant it must have Chern number $\ge 2$, hence then the curve we obtain would have degree $\ge 3$. Bearing this in mind, when defining $\Psi ( \mu_i^{p^{-1} B_1})$ we are counting holomorphic thimbles with a tangency (with a factor of $2$ arising from free choice of the placement of the intersections with $M_{\dagger}$ in $T$). We will define $$\tilde{s}^{(2)}_{eq} := \sum_i \tfrac{1}{2} \Psi(\mu_i^{p^{-1}B_1}) \in SH^*_{S^1}(E).$$

When considering the moduli spaces for $\Psi(\mu^{F'}_{i+1})$, we note that we are determining the coefficient of $u^{i+1}$ in $PSS_{eq}(z^{(1)} *_F^{(1)} [M])$. As previously, we will choose a homotopy of our almost complex structure and Hamiltonian: however, we will concurrently take a homotopy of our intersection condition $u(z_4) \in M_{\dagger}$ to $u(z_4) \in M_{\dagger'}$, such that $\dagger'$ is in a trivialising neighbourhood of $0 \in \bC P^1$, and the line segment between $\dagger$ and $\dagger'$ does not pass through $0$. As before, this homotopy of the defining data induces an exact chain between the cycles on defines on $SC^*_{eq}(E)$. In particular, we may assume on the cohomology level that the operation $$\sum_i \Psi(\mu^{F'}_{i+1}) = u PSS_{eq}(z^{(1)} *_F^{(1)} [M]) \in SH^*_{eq}(M).$$ It is important that all of the homotopies, both for the data and for the intersection conditions, occur away from the cylindrical end of the attached thimble (in fact the condition is stronger, in this case the homotopy is relative to the entirety of $T$ as opposed to just its cylindrical end). 

When considering the moduli spaces for $\Psi(\mu^{F''}_{i+2})$, we likewise choose concurrent homotopies of our data on $L,C,R$ as previously: on the bubble tree, we pick a path between our induced almost complex structure and the chosen almost complex structure as in Properties \ref{properties:J}, and a path between our Hamiltonian perturbation and $0$, and between our intersection condition with $M_{\dagger}$ and $M_{\dagger'}$. We then note that the holomorphic sphere on $C$ is of degree $0$, hence necessarily constant when the moduli space is of dimension $0$. To see this, note that it is important to recall that $F''$ was obtained as a submanifold of $p^{-1}B'_2$. In the moduli space parametrised by $p^{-1}B'_2$, the bubbled sphere containing $z_4$ is necessarily of topological degree $0$ (else, as with previous arguments, this would contradict that the map on the overall nodal sphere has degree $2$). Hence, this is also true for the moduli space over the submanifold $F'' \setminus p^{-1}B'_2$ (i.e. the moduli space parametrised by $F''$ has the induced Chern signature from $p^{-1}B'_2$). What we are determining is therefore the coefficient of $u^{i+2}$ in $(z^{(1)} \cup [M]) *^{(1)}_F [M]$ restricted to $E$, where we are using that the homology class does not depend on the fibre, i.e. $[M_{\dagger'}] = [M]$.

Putting this all together, we obtain \begin{equation} \label{equation:paul-conjecture} 2 u^2 \tilde{s}^{(2)}_{eq} = PSS_{eq}(u z^{(1)} *_F^{(1)} [M] \Bigr|_E +  (z^{(1)} \cup [M]) *^{(1)}_F [M] \Bigr|_E ). \end{equation} This matches \cite[Conjecture 3.3]{lefschetz3}, in the form of \cite[Equation (3.23)]{lefschetz3} after application of \cite[Equation (3.16)]{lefschetz3}.


\begin{remark}
    \label{remark:iteration}

    Using the terminology of Section \ref{sec:LbP}, suppose that the fixed point set of $A$ splits into multiple connected components. For each connected component of the fixed point set $F_i \subset A$ such that $\lambda_i = 2$, we may assume that $B$ intersects transversely with $E_{F_i} / S^1 \cong F_i$. In particular, this means that one may make a choice of $B$ such that in a neighbourhood of $F_i$, the space $BD(p^{-1}B)$ is smooth. Broadly, one does this by making sure that in some tubular neighbourhood of $F_i$, this $B$ is a union of fibres: this is a straightforward smoothing operation for a general $B$ that intersects $F_i$ transversely, which e.g. one can obtain by way of some simple results on vector bundles.

    More generally, however, there is no such guarantee of smoothness for $BD(p^{-1}B)$ at connected components of the fixed point set $F_j \subset A$ such that $\lambda_j > 2$. Indeed, in general $B$ will need to intersect every fibre of such $E_{F_j}/S^1$. To see this, note that restricting the principal $S^1$-bundle to such a fibre will be of the form $S^{2 \lambda_j - 1} \rightarrow S^{2 \lambda_j -1} / S^1$. This itself is never a trivial $S^1$-bundle for $\lambda_j >1$. This means that $B$ must hit every fibre of the unit normal bundle $E_{F_j}$ of $F_j$, in particular $B$ does not intersect $E_{F_j}$ transversely. In some loose sense, one can say that ``$B$ contains $F_j$", although this is a nonrigorous statement. Regardless, there is no longer a guarantee that $BD(p^{-1}B)$ is a smooth closed manifold in such cases, and therefore it is not possible to iterate. 

    Thus we note that the reason why we could iterate in this section (and why we can also do so in Section \ref{subsec:statementn}) is because at each stage the fixed point set is of codimension $2$ in the parameter space. Without this stipulation, iteration becomes more difficult (as written, one would need to directly prove that such $F_j$ embed into $BD(p^{-1}B)$ as smooth submanifolds). Methods are in production to deal with this issue in generality. 
\end{remark}

\subsection{Statement $n$}
\label{subsec:statementn}
In order to approach the result for general $n$, we will need some notation. Throughout this section, we will fix this $n \in \bZ_{\ge 1}$. For $i=0,1,\dots$ we denote by $S(i)$ the nodal holomorphic curve consisting of a chain of $i+1$ spheres labelled $S(i)_0,\dots,S(i)_i$, such that $S(i)_j$ is attached to $S(i)_{j+1}$ respectively at $0,\infty$ for $j=0,\dots,i-1$, and $S(i)_{i}$ is attached to a holomorphic thimble $T$ respectively at $0,\infty$. We will set the notation $S(-1) := T$.

The moduli space will consist of such nodal curves with $2n$ marked points, labelled $x_1,\dots,x_n,y_1,y_n$, and the intersection conditions will be $M$ for each $x_i$. We will pick distinct $\dagger = \dagger_1, \dagger_2, \dots,\dagger_n \in B(\epsilon,0) \subset \bC P^1$ and $M_{\dagger_i} = p^{-1}(\dagger_i)$, such that the line segments between $\dagger$ and $\dagger_i$ for $i\neq 1$ do not intersect each other or $0$. This will be used later: to begin with, we require the intersection condition at each $y_i$ to be $M_{\dagger}$.

Given the nodal curve $S(i)$, for each $(i+2)-$tuple $(A_0,A_1,\dots,A_i,A_{i+1})$ of pairwise disjoint subsets of $\{ x_1,\dots,x_n,y_1,\dots,y_n \}$ such that $\{ x_1,\dots,x_n,y_1,\dots,y_n \} = \bigsqcup_{j=0}^{i+1} A_j$, one can determine a smooth manifold consisting of marked nodal holomorphic spheres as follows:

\begin{itemize}
    \item for each such $(i+2)$-tuple, the underlying nodal holomorphic curve is $S(i)$, 
    \item for the marked points we require that for $j=0,\dots,i$ the $A_j$ denotes the set of marked points that lie on $S(i)_j$. The $A_{i+1}$ denotes marked points on $T$. 
    \item After quotienting by reparametrisation, we will fix some number of the marked points, decide by iteratively choosing the ``largest" using the ordering $$x_1 > x_2 > \dots > x_n > y_1 > \dots > y_n,$$ (note that this ordering can also induce an ordering on each $A_j$, making these into tuples as opposed to sets). 
    \item The smooth manifold will be determined by choice of position of the other marked points. 
    \end{itemize}
    
    We will keep the abusive notation $S(-1)$ to denote the unique $1$-tuple $(\{ x_1,\dots,x_n,y_1,y_n \})$, i.e. the marked Riemann surface consisting of all $2n$ points on $T$. We will also, for ease of notation, denote the smooth manifold $$[A_0,A_1,\dots,A_i] := \left(A_0,A_1,\dots,A_i, \{ x_1,\dots,x_n,y_1,y_n \} \setminus \bigsqcup_j A_j \right),$$ given any pairwise disjoint set of subsets $A_0,A_1,\dots,A_i$ of $\{ x_1,\dots,x_n,y_1,y_n \}.$ One can see examples of such space of nodal curves in Figure \ref{fig:thimble3}, as taken from the proceeding arguments. Further, these smooth manifolds can be compactified by adding the usual Deligne-Mumford sphere bubbling when marked points collide with each other and nodes.

\begin{figure}
    \centering
    \includegraphics[scale=0.45]{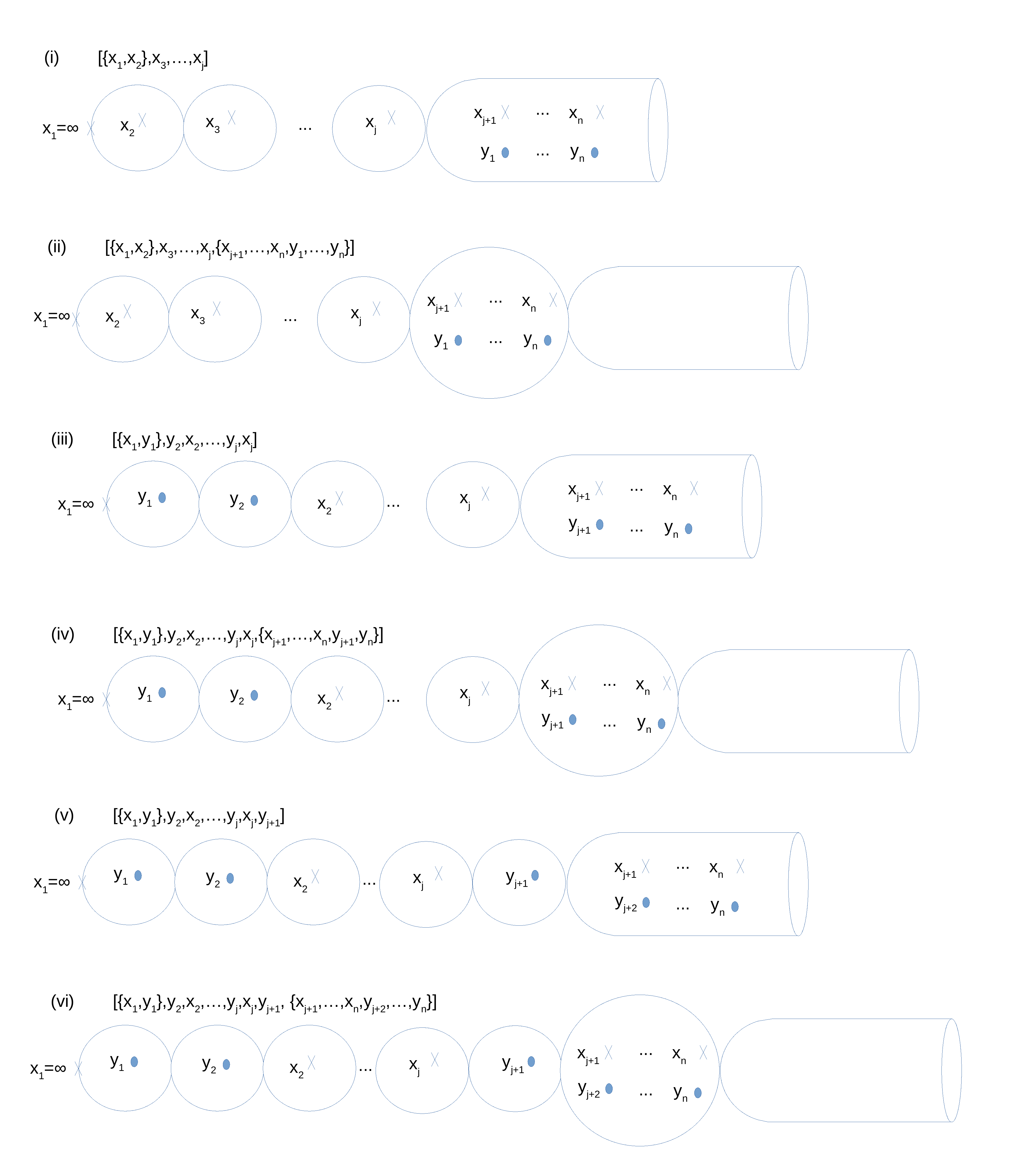}
    \caption{Various choices of spaces of nodal curves, with the given notation, from (top to bottom): (i) the left (and right) hand side of \eqref{equation:3rd-degreen}; (ii) the right hand side of \eqref{equation:3rd-degreen} and \eqref{equation:4th-degreen}; (iii) the left hand side of \eqref{equation:5th-degreen} and the right hand side of \eqref{equation:6th-degreen}; (iv) the right hand side of \eqref{equation:5th-degreen}; (v) the left hand side of \eqref{equation:6th-degreen} and the right hand side of \eqref{equation:5th-degreen}; (vi) the right hand side of \eqref{equation:6th-degreen}.}
    \label{fig:thimble3}
\end{figure}

Hence, we can consider the manifold of $2n$ complex dimensions, $S(-1)$. This will be our initial parameter space ($A$, in the terminology of Section \ref{sec:lifting-to-moduli-spaces}). Compare this to Section \ref{subsec:statement2}. There are two nonequivalent ways to apply LbP to this manifold, but both work the same way as the first application of LbP in Section \ref{subsec:statement2} (i.e. our $p^{-1}B$ term consists of a single marked point on $T$ converging to $\infty$). First, we move $y_1$ to $x_1=\infty$ in $T$ and obtain:

\begin{equation}
    \label{equation:1st-degreen}
    [\mu^{S(-1)}_{i+1}] = [\mu^{[\{x_1,y_1\}]}_{i+2}] + [\mu^{[\{x_1,\dots,x_n,y_1,\dots,y_n \}]}_{i+2}].
\end{equation}

Next, for each $j=1,\dots,n-1$ we note, by moving $y_{j+1} \rightarrow \infty$ in $T \subset S(2j-2)$, which yields a copy of $S(2j-1)$ (see Figure \ref{fig:thimble3}(iii),(iv),(v)) that:

\begin{equation}
    \label{equation:5th-degreen}
    [\mu^{[\{x_1,y_1\},y_2,x_2,\dots,y_j,x_j]}_{i+2j}] = [\mu^{[\{x_1,y_1\},y_2,x_2,\dots,y_j,x_j,y_{j+1}]}_{i+2j+1}] + [\mu^{[\{x_1,y_1\},y_2,x_2,\dots,y_j,x_j,\{x_{j+1},\dots,x_n,y_{j+1},\dots,y_n\}]}_{i+2j+1}].
\end{equation}

Likewise, for each $j=1,\dots,n-2$, if we move $x_{j+1} \rightarrow \infty$ in $T \subset S(2j-1)$, which yields a copy of $S(2j)$ (see Figure \ref{fig:thimble3}(iii),(v),(vi)) then:

\begin{equation}
    \label{equation:6th-degreen}
    [\mu^{[\{x_1,y_1\},y_2,x_2,\dots,y_j,x_j,y_{j+1}]}_{i+2j+1}] = [\mu^{[\{x_1,y_1\},y_2,x_2,\dots,y_j,x_j,y_{j+1},x_{j+1}]}_{i+2j+2}] + [\mu^{[\{x_1,y_1\},y_2,x_2,\dots,y_j,x_j,y_{j+1},\{x_{j+1},\dots,x_n,y_{j+2},\dots,y_n\}]}_{i+2j+2}].
\end{equation}

One iteratively applies Equation \eqref{equation:5th-degreen}, followed by \eqref{equation:6th-degreen}. For the final step, observe that (moving $x_n$ to $\infty$ in $T \subset S(2n-3)$ as in Section \ref{subsec:statement1}):
\begin{equation}
    \label{equation:7th-degreen}
    [\mu^{[\{x_1,y_1\},y_2,x_2,\dots,y_{n-1},x_{n-1},y_{n}]}_{i+2n-1}] = [\mu^{[\{x_1,y_1\},y_2,x_2,\dots,y_{n-1},x_{n-1},y_{n},x_n]}_{i+2n}].
\end{equation}

We can put together \eqref{equation:5th-degreen}, \eqref{equation:6th-degreen} and \eqref{equation:7th-degreen} (observing that \eqref{equation:7th-degreen} appears as the $j=n-1$ term in the last summation), to obtain that:

\begin{equation}
    \label{equation:8th-degreen}
    [\mu^{[\{x_1,y_1\}]}_{i+1}] =\sum_{j=1}^{n-1} \biggl( [\mu^{[\{x_1,y_1\},y_2,x_2,\dots,y_j,x_j,\{x_{j+1},\dots,x_n,y_{j+1},\dots,y_n\}]}_{i+2j} + [\mu^{[\{x_1,y_1\},y_2,x_2,\dots,y_j,x_j,y_{j+1},\{x_{j+1},\dots,x_n,y_{j+2},\dots,y_n\}]}_{i+2j+1}]\biggr).
\end{equation}

Next, returning to $\mu^{S(-1)}_{i+1}$, we note that if the $p^{-1}B$ term here was instead obtained by moving $x_2$ to $\infty$ in $T = S(-1)$ (which we had fixed to be $x_1$ by our choice of ordering) we obtain:

\begin{equation}
    \label{equation:2nd-degreen}
    [\mu^{S(-1)}_{i+1}] = [\mu^{[\{x_1,x_2\}]}_{i+2}] + [\mu^{[\{x_1,\dots,x_n,y_1,\dots,y_n \}]}_{i+2}].
\end{equation}

Note that for $j\ge 2$, by moving $x_{j+1}$ to $\infty$ in $T \subset S(j-1)$ (see Figure \ref{fig:thimble3}(i),(ii)) we obtain a copy of $S(j)$ and an inductive step:
\begin{equation}
    \label{equation:3rd-degreen}
    [\mu^{[\{x_1,x_2\},x_3,\dots,x_j]}_{i+j}] = [\mu^{[\{x_1,x_1\},x_3,\dots,x_{j+1}]}_{i+j+1}] + [\mu^{\{x_1,x_2 \},x_3,\dots,x_j,\{x_{j+1},\dots,x_n,y_1,\dots,y_n \}}_{i+j+1}].
\end{equation}

In particular, compiling \eqref{equation:2nd-degreen} and \eqref{equation:3rd-degreen} for $j=2,\dots,n$ we obtain that:
\begin{equation}
    \label{equation:4th-degreen}
    [\mu^{S(-1)}_{i+1}] = [\mu^{[\{x_1,\dots,x_n,y_1,\dots,y_n \}]}_{i+2}]+ [\mu^{[\{x_1,x_2\},x_3,\dots,x_n]}_{i+n}] + \sum_{j=2}^{n-1} [\mu^{[\{x_1,x_2 \},x_3,\dots,x_j,\{x_{j+1},\dots,x_n,y_1,\dots,y_n \}]}_{i+j+1}].
\end{equation}

Now, if we substitute \eqref{equation:4th-degreen} and \eqref{equation:8th-degreen} into \eqref{equation:1st-degreen}, then we obtain the final result of LbP:

\begin{equation}
    \label{equation:final-degreen}
    \begin{array}{l}
    [\mu^{[\{x_1,x_2\},x_3,\dots,x_n]}_{i+n}] + \sum_{j=2}^{n-1} [\mu^{[\{x_1,x_2 \},x_3,\dots,x_j,\{x_{j+1},\dots,x_n,y_1,\dots,y_n \}]}_{i+j+1}]= \\
    \sum_{j=1}^{n-1} \biggl( [\mu^{[\{x_1,y_1\},y_2,x_2,\dots,y_j,x_j,\{x_{j+1},\dots,x_n,y_{j+1},\dots,y_n\}]}_{i+2j} + [\mu^{[\{x_1,y_1\},y_2,x_2,\dots,y_j,x_j,y_{j+1},\{x_{j+1},\dots,x_n,y_{j+2},\dots,y_n\}]}_{i+2j+1}]\biggr).\end{array}
\end{equation}

Next, by iterating the lifting procedure, we need to interpret each of the $\Psi([\mu_i^f])$ for the pseudocycles (indeed, embedded submanifolds) above, and we know how to relate them via Section \ref{subsec:lifting-LbP}. We have already described (the various) $A$, and we pick the degree condition $\lambda=n$. The choice of Chern signature for the various iterative steps will be induced by the only possible choice of Chern signature on $S(-1)$ such that $\lambda = n$.

We note that in the moduli spaces that we consider for $\Psi([\mu^{[\{x_1,x_2\},x_3,\dots,x_n]}_{i+n-1}])$, as in the case of Section \ref{subsec:statement2} we can choose a homotopy of the data on the bubble tree such that the almost complex structure is as in Properties \ref{properties:J} and the Hamiltonian perturbation vanishes. Then the holomorphic maps on $S(n-1)_{0},\dots,S(n-1)_n$ must necessarily be constant. This is because the intersection of the thimble with $M_{\dagger}$ is $n$, and so in particular this must also be the intersection number with $M$. If any one of the holomorphic maps on $S(n-1)_{0},\dots,S(n-1)_n$ were nonconstant, this would imply that the limiting curve were to have degree $>n$. Hence, what we obtain is counts of holomorphic maps with thimbles with a degree $n$ tangency condition. This means $\Psi([\mu^{[\{x_1,x_2\},x_3,\dots,x_n]}_{i+n-1}])$ is the $u^{i+n-1}$ term of some Borman-Sheridan class $\tilde{s}^{(n)}_{eq}$, multiplied by $n!$ for the ambiguity in choice of $y_1,\dots,y_n$. For our purposes, we can take the definition $$n! \cdot \tilde{s}^{(n)}_{eq} := \sum_i \Psi([\mu^{[\{x_1,x_2\},x_3,\dots,x_n]}_{i}]) u^i.$$ 


Next, we consider $[\mu^{[\{x_1,x_2 \},x_3,\dots,x_j,\{x_{j+1},\dots,x_n,y_1,\dots,y_n \}]}_{i+j}]$. After a homotopy of the data as in the previous paragraph, if we consider the moduli spaces for the associated operation, then the holomorphic maps on $S(j)_0,\dots,S(j)_{j-1}$ must be constant, again for reasons of degree (in particular, the map on $S(j)_j$ has intersection number $n$ with $M_{\dagger}$). Thus we obtain some multiple of the equivariant PSS-map composed with a Gromov-Witten invariant $\xi^{j}$, which is a pseudocycle in $E$ associated to a GW invariant of total degree $n$, with a degree $j$ tangency to $M$ one point, an intersection with $M$ at a fixed marked point, and $n-j-1$ intersections with $M$ at freely moving marked points. The multiple, due to the ambiguity in choice of $y_1,\dots,y_n$, is $n!$. 

After a homotopy of the data again as above, and concurrently a homotopy of the intersection conditions of each of $y_2,\dots,y_j$ from $M_{\dagger}$ to respectively $M_{\dagger_2},\dots,M_{\dagger_j}$, one observes that the operation associated to $[\mu^{[\{x_1,y_1\},y_2,x_2,\dots,y_j,x_j,\{x_{j+1},\dots,x_n,y_{j+1},\dots,y_n\}]}_{i+2j}$ consists of (a multiple of):
\begin{enumerate}
    \item starting with $z^{(1)}$,
    \item  performing $(j-1)$ total iterations of: take the cup product with $[M_{\dagger_2}] = [M]$ (recalling that holomorphic maps must be constant on the spheres that do not contain $x_1,\dots,x_n$, as in the description of $\mu^{F''}_i$ in Section \ref{subsec:statement2} arising from the induced Chern signature) and then applying $*_F^{(1)}[M]$,
    \item  taking $*_F^{(n-j)} [M]$
    \item restricting to $E$ (as the pseudocycle we thus constructed cannot, for reasons of Chern number i.e. degree, land in $M$, so this forms a pseudocycle)
    \item   applying $PSS_{eq}$
\end{enumerate}
The multiple is $(n-j)! (n-j-1)!$, arising from the choice of ordering of $x_{j+2},\dots,x_n$, and the choice of each of $y_{j+1},\dots,y_{n}$.

Similarly to the previous paragraph, after a homotopy of the data, and concurrently of the intersection conditions of $y_2,\dots,y_{j+1}$ to become $M_{\dagger_2},\dots,M_{\dagger_{j+1}}$, the operation associated to $[\mu^{[\{x_1,y_1\},y_2,x_2,\dots,y_j,x_j,y_{j+1},\{x_{j+1},\dots,x_n,y_{j+2},\dots,y_n\}]}_{i+2j+1}]$ consists of (a multiple of):
\begin{enumerate}
    \item starting with $z^{(1)}$,
    \item  performing $(j-1)$ total iterations of: take the cup product with $[M]$ and then apply $*_F^{(1)}[M]$,
    \item  applying the cup product with $[M_{\dagger_{j+1}}] = [M]$, 
 \item taking $*_F^{(n-j)} [M]$, 
    \item restricting to $E$ 
    \item   applying $PSS_{eq}$
\end{enumerate}

 The multiplicative factor is $(n-j-1)! \cdot (n-j-1)!$, arising from choice of $x_{j+2},\dots,x_n$ and the choice of $n-j$ different intersections for each of $y_{j+2},\dots,y_n$.

In aggregate, repeated applications of Theorem \ref{theorem:lift-to-moduli} yield the following relation, where we note that for brevity $\clubsuit (x) := (x \cup [M]) *_F^{(1)} [M]$:

\begin{equation}
    \label{equation:final-degreen-operations}
    \begin{array}{l}
     n! \cdot \Biggl( u^{n} \tilde{s}^{(n)}_{eq} + PSS_{eq} \biggl( \sum_{j=2}^{n-1}  u^{2n-j-1} \xi^j \biggr) \Biggr) = \\
    PSS_{eq} \Biggl( \sum_{j=1}^{n-1} \biggl( (n-j)! (n-j-1)! u^{2n-2j} \cdot \Bigl( \clubsuit^{\circ j-1} (z^{(1)})\Bigr) *_F^{(n-j)}[M]\Biggr|_E \Biggr) \\ +PSS_{eq} \Biggl( \sum_{j=1}^{n-1} ((n-j-1)!)^2  u^{2n-2j-1} \cdot  \Bigl( \clubsuit^{\circ j-1}(z^{(1)}) \cup [M] \Bigr) *_F^{(n-j)}[M] \biggr)\Biggr|_E  \Biggr).\end{array}
\end{equation}

It should be briefly noted here that we can rearrange this to the following form:

\begin{theorem}
\label{theorem:general-solution}
\begin{equation}
\begin{array}{l} n! \cdot \Biggl( u^{n} \tilde{s}^{(n)}_{eq} + PSS_{eq} \biggl(  \sum_{j=2}^{n-1}  u^{2n-j-1} \xi^j \biggr) \Biggr) = \\ PSS_{eq} \Biggl( \sum_{j=1}^{n-1} ((n-j-1)!)^2 \cdot u^{2(n-j)-1} \biggl( \Bigl(\clubsuit^{\circ j-1} (z^{(1)})\Bigr) \cup \Bigl( (n-j) u + [M] \Bigr) \biggr) *_F^{(n-j)} [M]   \Biggr|_E   \Biggr). \end{array}
\end{equation}
\end{theorem}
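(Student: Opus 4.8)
The plan is to derive Theorem~\ref{theorem:general-solution} as a purely formal rearrangement of Equation~\eqref{equation:final-degreen-operations}, which itself is the aggregate output of the repeated lifting-LbP arguments already carried out in Section~\ref{subsec:statementn}. So no new geometry is required: every moduli-space count, homotopy-of-data argument and compactness statement needed has already been established. The task is to take the right-hand side of \eqref{equation:final-degreen-operations}, which is the sum of two $\text{PSS}_{eq}$-terms (one indexed by $j=1,\dots,n-1$ coming from the $[\mu^{[\{x_1,y_1\},y_2,x_2,\dots,y_j,x_j,\{\dots\}]}]$ classes at level $i+2j$, the other from the $[\mu^{[\dots,y_{j+1},\{\dots\}]}]$ classes at level $i+2j+1$), and combine the two summands index-by-index into a single sum.

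The key step is the following bookkeeping. In the first $\text{PSS}_{eq}$-sum the $j$-th term carries a coefficient $(n-j)!\,(n-j-1)!$ and a power $u^{2n-2j}=u^{2(n-j)}$, with argument $\bigl(\clubsuit^{\circ j-1}(z^{(1)})\bigr)*_F^{(n-j)}[M]$; in the second $\text{PSS}_{eq}$-sum the $j$-th term carries coefficient $((n-j-1)!)^2$ and power $u^{2n-2j-1}=u^{2(n-j)-1}$, with argument $\bigl(\clubsuit^{\circ j-1}(z^{(1)})\cup[M]\bigr)*_F^{(n-j)}[M]$. Factor $((n-j-1)!)^2$ and $u^{2(n-j)-1}$ out of both: since $(n-j)!\,(n-j-1)! = (n-j)\cdot((n-j-1)!)^2$, the first term becomes $((n-j-1)!)^2\,u^{2(n-j)-1}\cdot(n-j)u\cdot\bigl(\clubsuit^{\circ j-1}(z^{(1)})\bigr)*_F^{(n-j)}[M]$. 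Now both terms in the $j$-th slot share the factor $((n-j-1)!)^2\,u^{2(n-j)-1}$ and the operation $*_F^{(n-j)}[M]$ applied after; using bilinearity of the quantum product $*_F$ and of cup product in the remaining slot, we pull $\clubsuit^{\circ j-1}(z^{(1)})$ out and the bracketed factor becomes $\bigl((n-j)u + [M]\bigr)$, i.e. $\bigl(\clubsuit^{\circ j-1}(z^{(1)})\bigr)\cup\bigl((n-j)u+[M]\bigr)$ followed by $*_F^{(n-j)}[M]$ and restriction to $E$. Summing over $j=1,\dots,n-1$ and applying $\text{PSS}_{eq}$ (which is linear, as established in Section~\ref{subsec:operations}) gives exactly the right-hand side of the claimed identity; the left-hand side is carried over unchanged from \eqref{equation:final-degreen-operations}.

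The only genuinely substantive point to check — and the step I expect to require the most care rather than the most difficulty — is that the algebraic manipulations are legitimate operations in $SH^*_{S^1}(E)$ and on quantum cohomology of $F$: namely that $u$ acts $\mathbb{Z}[u]$-linearly, that $*_F$ and $\cup$ distribute over the relevant sums, that restriction $(-)|_E = \iota^*$ is linear, and that $\text{PSS}_{eq}$ is $H^*(BS^1)$-linear so that pulling $u$-powers across it is valid. All of these are standard and were implicitly used in assembling \eqref{equation:final-degreen-operations} in the first place, so the proof of Theorem~\ref{theorem:general-solution} is essentially a one-line observation: it is the identity $(n-j)!\,(n-j-1)! = (n-j)\cdot((n-j-1)!)^2$ together with bilinearity, nothing more. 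Consequently I would present the proof as: ``This is an immediate rearrangement of Equation~\eqref{equation:final-degreen-operations}, combining the two $\text{PSS}_{eq}$-summands in each degree $j$ using $(n-j)!\,(n-j-1)!=(n-j)\cdot((n-j-1)!)^2$ and the bilinearity of $*_F$ and $\cup$,'' and leave it at that.
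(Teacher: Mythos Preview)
Your proposal is correct and matches the paper's own approach: the paper simply states ``we can rearrange this to the following form'' immediately before Theorem~\ref{theorem:general-solution}, and you have supplied exactly the algebraic details of that rearrangement, namely the factorisation $(n-j)!\,(n-j-1)! = (n-j)\cdot((n-j-1)!)^2$ together with bilinearity to combine the two $\text{PSS}_{eq}$-sums into one.
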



\begin{remark}
Note that in the general case, unlike for $n=1,2$, we obtain terms of the form $u^{j}$ for $j > n$ on both sides of the equation. These should presumably cancel using some technology of Gromov-Witten invariants.
\end{remark}

\appendix


\section{Orientations for moduli spaces of degree 2 curves}
\label{sec:orientations}

We will confirm the signs for Equation \eqref{equation:paul-conjecture} in Section \ref{subsec:statement2}. Indeed, because \eqref{equation:4th-degree2} is as from Section \ref{subsec:statement1}, and the two relations \eqref{equation:1st-degree2} and \eqref{equation:2nd-degree2} are are related by a biholomorphism swapping $z_2$ and $z_3$, without loss of generality we only need to confirm the signs for the relation \eqref{equation:2nd-degree2}. We begin with the general version of LbP: \begin{equation}
    \label{equation:2nd-degree2-nosigns}   [\mu^A_{i-1} ] = \epsilon(F) [\mu^F_i] + n [\mu^{p^{-1} B_2}_i]
\end{equation}

In order to demonstrate that we obtain the correct signs, we first observe that there is a projection \begin{equation}\label{equation:projection}(T \setminus \{ \infty \})^3 \setminus \Delta_3 \rightarrow (T \setminus \{ \infty \} )^2 \setminus \Delta_2, \ (\infty,z_2,z_3,z_4) \mapsto (\infty,z_2,z_3),\end{equation} which extends to the compactification, perhaps collapsing some unstable components. Here, $\Delta_i \in T^i$ consists of all tuples where at least two points collide. We note that this projection is holomorphic, hence orientation preserving. We will therefore, for simplicity, work with the image of the projection, so $A'$ is the image of $A$ under \eqref{equation:projection}, $$A' = \text{Bl}^{\bC}_{\{ (\infty,\infty) \}} T^2, \quad W' := \text{Bl}^{\bR}_{F'}(A)$$ in which $F'$ is the exceptional divisor of $A'$ (the image of the projection of $F$ under \eqref{equation:projection}), and $E_{F'}$ the boundary in $W'$ associated to the unit normal bundle of $F'$. Then the open stratum of $A'$ is in Figure \ref{fig:thimble}(D) and $F'$ is in Figure \ref{fig:thimble}(E). 

Let $B' := \overline{\{ [x,\infty] = ([x],\infty) : [x] \in (T \setminus \{ \infty \})/S^1 \}} \subset A'/S^1$. In particular, $$p^{-1}B' :=\{ (x,\infty) : x \in T \setminus \{ \infty \} \},$$ which is the image of the projection of $p^{-1}B_2$ under \eqref{equation:projection}. See Figure \ref{fig:thimble2}(G). To see that the removal of $p^{-1}B'$ trivialises the $S^1$-bundle, we will write down a section of the bundle $$W' \setminus p^{-1}B' \rightarrow \widecheck{W'} \setminus B'.$$ Firstly, we notice that $E_{F'} \cong S^3$ and $E_{F'} \rightarrow F'$ is the Hopf fibration. Further, $B' \cap F'$ is a single point, and $E_{F'} \cap p^{-1}(B')$ is a copy of $S^1$. Hence, $E_{F'} \setminus (E_{F'} \cap p^{-1}B') = S^3 \setminus S^1 \cong D^2 \times S^1$. Further, in a collar neighbourhood of $E_{F'} \setminus (E_{F'} \cap p^{-1}B') \subset W'$, we observe that we have a manifold of the form $D^2 \times S^1 \times (-\epsilon,0]$. Away from $F'$, we define: $$s([x,y]) = \begin{cases}\begin{array}{l} (x |y| y^{-1},|y|) \text{ if } x \neq \infty, \\ (\infty,|y|) \text{ if } x = \infty \end{array}\end{cases}$$ which is well defined because $y \neq \infty$. 

We also observe that $s$ extends to a natural section $s_{B'}$ over $\text{Bl}_{B'} (\widecheck{W'})$, by adding in its values over $E_{B'}$ (the unit normal bundle of $B' \subset \widecheck{W}$) as follows. Firstly, we observe that locally $E_{B'}$ may be parametrised as $(x,Re^{i \theta})$ for $x \in B'$, for $\theta \in S^1$ and for some large $R$. Then $s_{B'}|_{E_{B'}} : E_{B'} \rightarrow p^{-1}B' $, where we define  \begin{equation} \label{eq:section-on-infty} s_{B'}[x,R e^{i \theta}] = (x e^{- i \theta}, \infty),\end{equation} which we observe is well-defined.

We obtain by LbP that $$[\mu_i^{A'} ] = \epsilon(F') [\mu_i^{F'}] + n' [\mu_i^{p^{-1}B'} ],$$ and then must determine the weights. First we show that $\epsilon(F') = 1$. Observe that $W'$ (with the natural complex orientation) is orientedly diffeomorphic to a closed ball in $\bC^2$ (with the natural complex orientation), and the boundary of this ball is outwardly oriented. Hence $\epsilon(F') = +1$.

Next, we calculate the weight $n'$ around $B'$. We will pick local coordinates. If we use $S^1$-equivariant coordinates (where on the left hand side the action is multiplication on the first coordinate) $$S^1 \times (0,\infty) \times S^1 \times (0,\infty) \rightarrow (T \setminus \{ \infty \}) \times (T \setminus \{ \infty \}), \quad (e^{2 \pi i \theta},r, e^{2 \pi i \phi},\rho) \mapsto (r e^{-2 \pi i \theta}, \rho, e^{-2 \pi i(\phi \theta)}),$$ then this induces the natural orientation on $T \times T$ (the differential of the transition function from the usual choice is a block-diagonal matrix, with each $2 \times 2$ matrix having positive determinant). This implies that, using our given choice of the orientation of $\widecheck{W'}$, we have oriented local coordinates \begin{equation} \label{eq:coordinates-check-W} (0,\infty) \times S^1 \times (0,\infty) \rightarrow \widecheck{W}, \quad (r,  e^{2 \pi i \phi},\rho) \mapsto [r,\rho e^{-2 \pi i \phi}].\end{equation} We can extend this to a boundary (i.e. $E_{B'}$) as $\rho \rightarrow \infty$. The local coordinates on $E_{B'}$ (which are chosen so that the coordinates \eqref{eq:coordinates-check-W} are the product of the local coordinates on $E_{B'}$ with an outward-pointing normal for the boundary at $\infty$, i.e. positively-oriented radial coordinates $\rho$) are \begin{equation} \label{eq:coordinates-E-B}(e^{2 \pi i \phi},r) \mapsto [r, (e^{2 \pi i \phi},\infty)] \in E_{B'}.\end{equation} We notice because we swapped the $e^{2 \pi i \phi}$ and $r$ coordinates, we precompose the $S^1$-coordinate with the complex conjugation to maintain orientation. Further, there is the natural complex coordinate on $p^{-1}B' \cong T \times \{ \infty \}$ which, on $T \setminus \{ \infty \} \cong (0,\infty) \times S^1$ becomes $(0,\infty) \times S^1 \rightarrow p^{-1} B', \quad (r, e^{i \psi}) \mapsto (r e^{i \psi}, \infty)$. Relabelling this in such a way as it matches the orientation of $B$ from \eqref{eq:coordinates-E-B} parametrised by $r \in (0,\infty)$, we obtain coordinates \begin{equation}\label{eq:coordinates-p-1-B} S^1 \times (0,\infty) \rightarrow p^{-1}B', \quad (e^{2 \pi i \psi}, r) \mapsto (re^{- 2 \pi i \psi}, \infty).\end{equation}

Putting together \eqref{eq:coordinates-E-B}, \eqref{eq:section-on-infty}, and \eqref{eq:coordinates-p-1-B}, we see that in local coordinates one obtains: $$\xymatrix{
(e^{2 \pi i \phi},r) \in S^1 \times (0,\infty)
\ar@{->}^-{\simeq}[r]
\ar@{->}_-{}[d]
&
[r,( e^{2 \pi i \phi},\infty)] \in E_{B'}
\ar@{->}^-{s_{B'}}[d]
\\
(e^{2 \pi i \phi}, r) \in S^1 \times (0,\infty)
&
(r e^{- 2 \pi i \phi},\infty) \in p^{-1}B'
\ar@{->}^-{\simeq}[l]
}$$

In particular, by restricting $s_{B'}$ to a fibre we obtain that $n'=1$. 

Now, we notice that $\epsilon(F) = \epsilon(F')$ and $n = n'$. This is because on the open stratum \eqref{equation:projection} is simply a projection map, which is immediately holomorphic, and thus is orientation preserving on the submanifolds and the induced pseudocycle bordism obtained from LbP. Hence, projection preserves $\epsilon(F)$ and $n$. This shows that we have obtained the correct signs in equations \eqref{equation:1st-degree2} and \eqref{equation:2nd-degree2}. The same arguments hold for all of the calculations in Section \ref{subsec:statementn}, as we are just performing iterated instances of \eqref{equation:paul-conjecture}.

\bibliographystyle{plain}
\bibliography{biblio}

\begin{thebibliography}{10}

\bibitem{atiyahbott}
M.~F. Atiyah and R.~Bott.
\newblock The moment map and equivariant cohomology.
\newblock {\em Topology}, 23(1):1--28, 1984.

\bibitem{borman-sheridan-varolgunes}
Matthew~Strom Borman, Nick Sheridan, and Umut Varolgunes.
\newblock Quantum cohomology as a deformation of symplectic cohomology.
\newblock {\em J. Fixed Point Theory Appl.}, 24(2):Paper No. 48, 77, 2022.

\bibitem{bourgeoisoanceafredholm}
Fr{\'e}d{\'e}ric Bourgeois and Alexandru Oancea.
\newblock Fredholm theory and transversality for the parametrized and for the s
  1-invariant symplectic action.
\newblock {\em Journal of the European Mathematical Society}, 12(5):1181--1229,
  2010.

\bibitem{bourgeois-oancea}
Fr\'{e}d\'{e}ric Bourgeois and Alexandru Oancea.
\newblock {$S^1$}-equivariant symplectic homology and linearized contact
  homology.
\newblock {\em Int. Math. Res. Not. IMRN}, (13):3849--3937, 2017.

\bibitem{lf-pseudocycles}
Spencer Cattalani and Aleksey Zinger.
\newblock {P}seudocycles for {B}orel-{M}oore {H}omology.
\newblock preprint arXiv:2209.10036, 2022.

\bibitem{ganatra-pomerleano}
Sheel Ganatra and Daniel Pomerleano.
\newblock Symplectic cohomology rings of affine varieties in the topological
  limit.
\newblock {\em Geom. Funct. Anal.}, 30(2):334--456, 2020.

\bibitem{ganatra2021log}
Sheel Ganatra and Daniel Pomerleano.
\newblock A log pss morphism with applications to lagrangian embeddings.
\newblock {\em Journal of Topology}, 14(1):291--368, 2021.

\bibitem{todd}
Todd {L}iebenschutz Jones.
\newblock {S}hift operators and connections on equivariant symplectic
  cohomology.
\newblock preprint arXiv:2104.01891, 2021.

\bibitem{jholssympl}
Dusa McDuff and Dietmar Salamon.
\newblock {\em {$J$}-holomorphic curves and symplectic topology}, volume~52 of
  {\em American Mathematical Society Colloquium Publications}.
\newblock American Mathematical Society, Providence, RI, second edition, 2012.

\bibitem{schwarzmorsesingiso}
Matthias Schwarz.
\newblock Equivalences for {M}orse homology.
\newblock In {\em Geometry and topology in dynamics ({W}inston-{S}alem, {NC},
  1998/{S}an {A}ntonio, {TX}, 1999)}, volume 246 of {\em Contemp. Math.}, pages
  197--216. Amer. Math. Soc., Providence, RI, 1999.

\bibitem{lefschetz3}
Paul Seidel.
\newblock Fukaya {$A_\infty$}-structures associated to {L}efschetz fibrations.
  {III}.
\newblock {\em J. Differential Geom.}, 117(3):485--589, 2021.

\bibitem{seidelsmith}
Paul Seidel and Ivan Smith.
\newblock Localization for involutions in {F}loer cohomology.
\newblock {\em Geom. Funct. Anal.}, 20(6):1464--1501, 2010.

\bibitem{tonkonog}
Dmitry {T}onkonog.
\newblock {S}tring topology with gravitational descendants, and periods of
  {L}andau-{G}inzburg potentials.
\newblock preprint arXiv:1801.06921.

\bibitem{zinger}
Aleksey Zinger.
\newblock Pseudocycles and integral homology.
\newblock {\em Trans. Amer. Math. Soc.}, 360(5):2741--2765, 2008.

\end{thebibliography}

\end{document}